\documentclass[a4paper,11pt,twoside]{amsart}
\usepackage[english]{babel}
\usepackage[utf8]{inputenc}

\usepackage[a4paper,inner=3cm,outer=3cm,top=4cm,bottom=4cm,pdftex]{geometry}
\usepackage{fancyhdr}
\usepackage{color}
\usepackage{bold-extra}
\usepackage{ mathrsfs }
\usepackage{comment}
\usepackage{graphics}
\usepackage{aliascnt}
\usepackage[pdftex,citecolor=green,linkcolor=red]{hyperref}

\usepackage{amsmath}
\usepackage{amsfonts}
\usepackage{amssymb}
\usepackage{amsthm}
\usepackage{comment}
\usepackage{mathtools}
\usepackage{enumerate}
\usepackage{enumitem} 

\newtheorem{theorem}{Theorem}[section]
\newtheorem*{theorem*}{Theorem}

\newtheorem*{corollary*}{Corollary}

\newtheorem{lemma}[theorem]{Lemma}
\newtheorem{proposition}[theorem]{Proposition}
\newtheorem*{proposition*}{Proposition}
\theoremstyle{definition}
\newtheorem{definition}[theorem]{Definition}
\newtheorem{remark}[theorem]{Remark}

\numberwithin{equation}{section}

\newcommand\eps{\varepsilon}

\newcommand\R{\mathbb{R}}
\newcommand\Z{\mathbb{Z}}
\newcommand\N{\mathbb{N}}
\newcommand\C{\mathbb{C}}

\newcommand{\wt}{\widetilde}
\newcommand{\ve}{\mathbf}

\renewcommand\deg{\mathrm{deg}}

\newcommand{\PS}{\operatorname{PS}}
\newcommand{\Poly}{\operatorname{Poly}}

\begin{document}
\title{Linear equations in Piatetski-Shapiro primes}

\author[Shao]{Xuancheng Shao}
\address{Department of Mathematics, University of Kentucky\\
%715 Patterson Office Tower\\
Lexington, KY 40506\\
USA}
\email{xuancheng.shao@uky.edu}
%\thanks{XS was supported by NSF grant DMS-2200565.}

\author{Yu-Chen Sun}
\address{
Department of Mathematics,
University of Bristol, Woodland Rd, Bristol BS8 1UG}
\email{yuchensun93@163.com}

%\date{\today}

\maketitle
%\tableofcontents

\begin{abstract}
We establish discorrelation estimates between the Piatetski-Shapiro prime set
\[
\mathcal{P}_{\gamma} := \{p \text{ is prime and } p = \lfloor n^{1/\gamma} \rfloor \text{ for some } n \in \mathbb{N}\}
\]
and arbitrary nilsequences when $\gamma \in (0,1)$ is sufficiently close to $1$. This extends earlier works which treated linear or polynomial exponential phase functions. As an application, we establish an asymptotic formula for the number of solutions in $\mathcal{P}_{\gamma}$ to any ``finite-complexity" system of linear equations, including for the number of  $k$-term arithmetic progressions in  $\mathcal{P}_{\gamma}$ up to a threshold $N$ for any given $k \geq 3$. Furthermore, we show that there exists an absolute constant $C>0$ such that if
\[
1 - 2^{-Ck} < \gamma < 1,
\]
then the Piatetski-Shapiro primes $\mathcal{P}_{\gamma}$ contain infinitely many non-trivial $k$-term arithmetic progressions. This significantly improves upon the previous range of $\gamma$ obtained by Li and Pan \cite{LiPan}, which is of triple exponential type.
\end{abstract}

\section{Introduction}

A central theme in analytic number theory is the study of prime values in sparse integer sequences. A classical example of a sparse integer sequence is the Piatetski-Shapiro sequence, defined for $c>1$ by
$$
\PS_c = \{ \lfloor n^c\rfloor: n \in \Z_{>0}\}.
$$
This was introduced by Piatetski-Shapiro \cite{PS} in 1953, who proved that for $1 < c < 12/11$ the sequence $\PS_c$ contains infinitely many primes, and obtained the asymptotic formula
$$
\pi_c(x) := \#\{p \leq x: p \in \PS_c\} \sim \frac{x^{\frac{1}{c}}}{\log x}.
$$
Subsequence work by various authors \cite{Heath-Brown, Liu-Rivat, Jia, Baker-Harman-Rivat, Kumchev, Rivat-Sargos, Rivat-Wu} progessively enlarged the admissible range for $c$. The current record for the lower bound $\pi_c(x) \gg x^{\frac{1}{c}}/\log x$ is $1 < c < 243/205$ due to Rivat and Wu \cite{Rivat-Wu}. 

In this paper, we study additive problems for primes in the Piatetski-Shapiro sequence $\PS_c$ for $c>1$ sufficiently close to $1$, loosely referred to as Piatetski-Shapiro primes. This is motivated by the Green-Tao programme on linear equations in primes, who proved in a landmark series of papers \cite{GT-kAP,GT-linear,GT-nil} (with a key ingredient from \cite{GTZ}) that the set of all primes contains arbitrarily long arithmetic progressions and, more generally, that for any finite-complexity system of linear forms $\Psi = (\psi_1,\cdots,\psi_t): \Z^d\rightarrow \Z^t$ with no local obstructions one has an asymptotic formula
$$
\sum_{\ve{n} \in [-X,X]^d} \prod_{i=1}^t \Lambda(\psi_i(\ve{n})) \sim (2X)^d \prod_p \beta_p,
$$
where $\beta_p$ are the local densities. Here $\Psi$ is said to have finite complexity if no two of $\psi_1,\cdots,\psi_t$ are affinely related. The proof rests on a transference principle that allows one to transfer Szemer\'{e}di-type regularity results from dense subsets of the integers to the primes, provided that a suitable pseudorandom majorant for the primes exists. This technology was later extended to other sparse subsets of primes, such as Chen primes and almost-twin primes \cite{BST}.

The goal of the present paper is to obtain an asymptotic formula for linear equations in Piatetski-Shapiro primes, generalizing the Green-Tao theorem on linear equations in primes. For $\gamma \in (0,1)$, define $\Lambda_{\gamma}$ to be the normalized indicator function of the Piatetski-Shapiro primes in $\PS_{1/\gamma}$:
$$
\Lambda_{\gamma}(n) = \frac{n^{1-\gamma}}{\gamma} \Lambda(n) \cdot 1_{n \in \PS_{1/\gamma}}.
$$
This normalization ensures that the average value of $\Lambda_{\gamma}$ is $1$. By convention, we extend $\Lambda$ to be a function on $\Z$ by setting its value to be $0$ on non-positive integers.

\begin{theorem}[Linear equations in Piatetski-Shapiro primes]\label{thm:lin-eq-PS}
Let $X \geq 3$. Let $d,t,L\geq 1$. Let $\Psi = (\psi_1,\cdots,\psi_t)$ be a system of affine-linear forms, where each $\psi_i:\Z^d\rightarrow\Z$ has the form $\psi_i(\ve{x}) = \dot{\psi_i}\cdot\ve{x} + \psi_i(0)$ with $\dot{\psi_i}\in\Z^d$ and $\psi_i(0) \in \Z$ satisfying $\|\dot{\psi_i}\|_{\infty} \leq L$ and $|\psi_i(0)| \leq LX$. Suppose that $\dot{\psi_i}$ and $\dot{\psi_j}$ are linearly independent whenever $i \neq j$. Let $K \subset [-X, X]^d$ be a convex body. Then
$$
\sum_{\ve{n} \in K \cap \Z^d} \prod_{i=1}^t \Lambda_{\gamma}(\psi_i(\ve{n})) = \sum_{\ve{n} \in K \cap \Z^d} \prod_{i=1}^t \Lambda(\psi_i(\ve{n})) + o_{d,t,L}(X^d)
$$
for $\gamma \in (0,1)$ provided that $1-\gamma$ is sufficiently small in terms of $d,t,L$.
\end{theorem}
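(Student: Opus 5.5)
The plan is to compare $\Lambda_\gamma$ with $\Lambda$ through the generalized von Neumann theorem, using a pseudorandom majorant that dominates both, and to reduce the comparison to a discorrelation estimate between $\Lambda_\gamma-\Lambda$ and nilsequences. First I will carry out the standard $W$-trick. Fix a slowly growing $w$ and set $W=\prod_{p\le w}p$. For $b$ coprime to $W$, put $\Lambda_{\gamma,b,W}(n)=\frac{\phi(W)}{W}\Lambda_\gamma(Wn+b)$, and define $\Lambda_{b,W}$ analogously. The left-hand side then decomposes into sums over residue classes of $\ve{n}$ modulo $W$, exactly as in Green--Tao \cite{GT-linear}. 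The membership condition $m\in \PS_{1/\gamma}$ equals $\lfloor -m^{\gamma}\rfloor-\lfloor -(m+1)^{\gamma}\rfloor$, which is $\gamma m^{\gamma-1}+\psi(-(m+1)^\gamma)-\psi(-m^\gamma)+O(m^{\gamma-2})$, where $\psi(x)=x-\lfloor x\rfloor-1/2$. Hence $\Lambda_\gamma(m)=\Lambda(m)+\Lambda(m)\frac{m^{1-\gamma}}{\gamma}\bigl(\psi(-(m+1)^\gamma)-\psi(-m^\gamma)\bigr)+\text{small}$. The error term $E(m)$ is what I must show is Gowers-uniform after the $W$-trick.

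Next I need a majorant. Since $\Lambda_\gamma\le \gamma^{-1}X^{1-\gamma}\Lambda$ pointwise, and $X^{1-\gamma}$ is not $O(1)$, the Green--Tao majorant for $\Lambda$ does not by itself dominate $\Lambda_\gamma$. Instead I will take $\nu(n)=\nu_{GT}(n)\cdot \frac{n^{1-\gamma}}{\gamma}1_{n\in\PS_{1/\gamma}}$, i.e.\ the truncated-divisor-sum majorant restricted to the Piatetski-Shapiro set. I then verify the linear forms condition for $\nu$, as well as for $\frac{1}{2}(\nu+\nu_{GT})$. Expanding $1_{\PS}$ via $\psi$ again, the main term reproduces the Green--Tao linear forms estimate. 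The error terms are averages of products of $\psi$-differences evaluated at $\psi_i(\ve n)^\gamma$, weighted by divisor sums. I will handle these by expanding $\psi$ in a truncated Fourier series (Vaaler) and bounding the resulting exponential sums $\sum_{\ve n} e(\sum_i h_i\psi_i(\ve n)^\gamma)$, twisted by $\lambda$-sums over divisors $d_i\mid \psi_i(\ve n)$, with van der Corput estimates in one of the variables. This works because $1-\gamma$ is small and there are only finitely many forms. With both linear forms conditions in hand, the generalized von Neumann theorem together with the inverse theorem for Gowers norms \cite{GTZ} reduces the theorem to
\[
\sum_{n\le X}\bigl(\Lambda_{\gamma,b,W}(n)-\Lambda_{b,W}(n)\bigr)F(g(n)\Gamma)=o(X)
\]
for every bounded-complexity nilsequence. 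Here the $W$-tricked function $\Lambda_{b,W}$ is already known to be uniform by \cite{GT-nil}, so it suffices to treat $\Lambda_\gamma$ alone. I will also use the linear-independence hypothesis through the finite complexity of the system.

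The core step, and the main obstacle, is this nilsequence discorrelation. I will Fourier expand $\psi$ with $H\approx X^{1-\gamma+\eps}$ terms, which reduces the problem to bounding
\[
\sum_{0<|h|\le H}\frac{1}{|h|}\Bigl|\sum_{m\sim X} \Lambda(m)\, m^{1-\gamma}\,\bigl(e(h(m+1)^\gamma)-e(hm^\gamma)\bigr)F(g(m)\Gamma)\Bigr|.
\]
Writing $e(h(m+1)^\gamma)-e(hm^\gamma)\approx 2\pi i h\gamma m^{\gamma-1}e(hm^\gamma)$ cancels the weight. It therefore suffices to show that $\sum_{m\sim X}\Lambda(m)e(hm^\gamma)F(g(m)\Gamma)\ll X^{1-\delta}$ uniformly for $|h|\le X^{1-\gamma+\eps}$. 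To prove this I apply a Heath-Brown or Vaughan decomposition of $\Lambda$ into type I and type II sums. For type II sums, Cauchy--Schwarz in one variable produces $\sum e(h((mn)^\gamma-(mn')^\gamma))F(g(mn)\Gamma)\overline{F(g(mn')\Gamma)}$. Here $hm^\gamma$ is locally a polynomial phase of degree bounded in terms of $1/(1-\gamma)$ after Taylor expansion on short intervals of length $X^{1-\eta}$, so the phase can be absorbed into a polynomial nilsequence on a product nilmanifold. The type II analysis of \cite{GT-nil} (equidistribution and factorization theorems) then applies, with the additional ingredient of showing that the non-polynomial phase $hm^\gamma$ cannot correlate with a nilsequence when $h$ is nonzero. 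The danger is losing more than $X^{1-\gamma}$ in these estimates. I will try to secure a power saving $X^{-c}$ with $c$ independent of $\gamma$, and then take $1-\gamma<c/2$, which explains why the hypothesis requires $1-\gamma$ small in terms of $d,t,L$.
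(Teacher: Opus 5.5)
Your outer architecture matches the paper's. It has the $W$-trick, a majorant obtained by restricting a truncated divisor sum to $\PS_{1/\gamma}$ with weight $n^{1-\gamma}/\gamma$ (using $\frac12(\nu+\nu_{GT})$ to dominate both $\Lambda_\gamma$ and $\Lambda$ is a sensible detail), and the generalized von Neumann theorem with a Gowers inverse theorem. It also has the sawtooth expansion with $H\approx X^{1-\gamma+\eps}$, which reduces matters to $\sum_{m\sim X}\Lambda(m)e(hm^{\gamma})F(g(m)\Gamma)\ll X^{1-c}$ with $c>1-\gamma$, uniformly in $1\le|h|\le H$.

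The gap is that this core estimate, which you flag as the main obstacle, is not proved. The sentence ``the type II analysis of \cite{GT-nil} then applies, with the additional ingredient of showing that the non-polynomial phase cannot correlate with a nilsequence'' defers exactly the content of Sections \ref{sec:poly} and \ref{sec:nil}, and the route it points to does not work as stated.

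First, \cite{GT-nil} is a long-interval argument whose final bound is only $\log^{-A}X$, because its major-arc input rests on Siegel--Walfisz. That is far from the power saving you need to sum over $X^{1-\gamma+\eps}$ frequencies. Moreover, once you Taylor-expand $hm^\gamma$ on intervals $(n_0,n_0+L]$, you face a different polynomial sequence $n\mapsto(Q(n_0,n),g(n))$ on $\R\times G$ for each $n_0$. What is required is an inverse theorem for primes twisted by nilsequences in \emph{short} intervals, with polynomial dependence on $\delta$ (Theorem \ref{primes-short-intervals}, from \cite{MSTT}).

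Second, and more fundamentally, no interval-by-interval argument can succeed. On a single interval $(n_0,n_0+L]$ the phase $e(hn^\gamma)$ agrees up to $o(1)$ with the polynomial phase $e(Q(n_0,n))$. So it correlates perfectly with the polynomial nilsequence $e(-Q(n_0,n))$, and the combined sequence can be major-arc there, for instance by imitating $n^{iT}$, with which primes in short intervals genuinely correlate. Discorrelation is really the statement that one fixed $g$ cannot track $Q(n_0,\cdot)$ for many $n_0$ at once. The argument must therefore let the base point $n_0$ vary.

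This is what Sections \ref{sec:poly} and \ref{sec:nil} supply.
\begin{itemize}
\item Theorem \ref{primes-short-intervals} is applied on $\wt{G}=(\R\times G)/\ker\wt{\xi}$, chosen so that the center is one-dimensional and $\wt F$ has a nontrivial central frequency. It gives, for many $n_0$, a horizontal character $(k,\chi)$ with $\|kQ(n_0,\cdot)+\chi\circ g(n_0,\cdot)\|_{C^\infty(n_0,n_0+L]}\le\delta^{-O(1)}$, and $(k,\chi)$ is pigeonholed to be independent of $n_0$.
\item If $k\neq 0$, then $kh\gamma n_0^{\gamma-1}+R_1(n_0)$ lies within $\delta^{-O(1)}/L$ of an integer for $\delta^{O(1)}N$ values of $n_0\sim N$. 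The recurrence result for $An^\beta+P(n)$ modulo $1$ (Proposition \ref{prop:recurrence}) forbids this unless $|h|N^\gamma\le\delta^{-O(1)}N/L$.
\item If $k=0$, the two-parameter Lemma \ref{lem:factor} passes to a codimension-one subgroup, and one inducts on $\dim G$. This is why $g$ may depend on $n_0$ in Theorem \ref{prime-hngamma-nil}.
\item In the abelian base case, the $n^{iT}$ obstruction is removed using $\beta_1+2n_0\beta_2$. This combination of the first two local Taylor coefficients annihilates $T\log n$ but equals $h\gamma^2 n_0^{\gamma-1}$ for $hn^\gamma$.
\end{itemize}
None of these ingredients appears in your sketch.

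The Taylor degree must also depend only on a fixed short-interval exponent, not on $1/(1-\gamma)$ as you write. Otherwise the filtration degree, and with it your saving $c$, degrades as $\gamma\to1$, and your final choice $1-\gamma<c/2$ becomes circular.

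Two smaller points:
\begin{itemize}
\item In the majorant, a one-variable second-derivative bound is not enough, since $\sum_i h_i\psi_i(x)^\gamma$ can have small second derivative through cancellation among the forms. You need a Vandermonde argument producing a large derivative of order $O(t)$, followed by a higher-order Weyl bound, as in Lemma \ref{l:multi_linear_form} and \cite{LiPan}.
\item You verify only the linear forms condition, so the inverse theorem must be used in the transferred form without correlation conditions (\cite[Proposition 9.4]{MSTT}), not \cite{GTZ} alone.
\end{itemize}
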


Thus the sum on the left-hand side involving $\Lambda_{\gamma}$ has the same asymptotic formula as the sum on the right-hand side involving $\Lambda$, which is given by the Green-Tao theorem \cite{GT-linear}. As immediately corollaries, we can deduce a version of Vinogradov's three primes theorem with Piatetski-Shapiro primes and we can count asymptotically the number of $k$-APs in Piatetski-Shapiro primes in $\PS_c$ when $c$ is sufficiently close to $1$.

\begin{corollary*}[Vinogradov's theorem with Piatetski-Shapiro primes]
Let $N \geq 3$ be an odd positive integer. Then
$$
\sum_{\substack{n_1,n_2,n_3 \\ N=n_1+n_2+n_3}} \Lambda_{\gamma}(n_1)\Lambda_{\gamma}(n_2)\Lambda_{\gamma}(n_3) = \sum_{\substack{n_1,n_2,n_3 \\ N=n_1+n_2+n_3}} \Lambda(n_1)\Lambda(n_2)\Lambda(n_3) + o(N^2)
$$
for $\gamma \in (0,1)$ provided that $1-\gamma$ is sufficiently small.
\end{corollary*}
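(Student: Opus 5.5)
The Corollary is a special case of Theorem~\ref{thm:lin-eq-PS}, so the plan is to choose $d$, $t$, $L$, $\Psi$ and $K$ so that the left side of the theorem is exactly the ternary sum.

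\textbf{Choice of system.} Take $d=2$, $t=3$ and $X=N$, with variables $\ve{n}=(n_1,n_2)$ and the forms
\[
\psi_1(\ve{n})=n_1,\qquad \psi_2(\ve{n})=n_2,\qquad \psi_3(\ve{n})=N-n_1-n_2 .
\]
Their linear parts are $(1,0)$, $(0,1)$ and $(-1,-1)$, which are pairwise linearly independent. All coefficients satisfy $\|\dot{\psi_i}\|_\infty\le 1$ and $|\psi_i(0)|\le N$, so $L=1$ suffices.

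\textbf{Choice of convex body.} Take the triangle
\[
K=\{(x_1,x_2)\in\R^2: x_1\ge 0,\ x_2\ge 0,\ x_1+x_2\le N\}\subset[-N,N]^2 .
\]
Both $\Lambda_\gamma$ and $\Lambda$ vanish on non-positive integers. Hence any triple $(n_1,n_2,n_3)$ with $n_1+n_2+n_3=N$ that contributes to either sum has all $n_i\ge1$, which forces $(n_1,n_2)\in K$. Conversely, every lattice point of $K$ gives such a triple. So
\[
\sum_{\ve{n}\in K\cap\Z^2}\prod_{i=1}^3 \Lambda_\gamma(\psi_i(\ve{n}))
\]
equals the left side of the Corollary, and the same identity holds with $\Lambda$ in place of $\Lambda_\gamma$.

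\textbf{Conclusion.} Theorem~\ref{thm:lin-eq-PS} now gives the two sums equal up to $o_{2,3,1}(N^2)=o(N^2)$, provided $1-\gamma$ is small in terms of the fixed data $(2,3,1)$. That is an absolute smallness condition, as the Corollary requires. The hypothesis that $N$ is odd is not used here; it only makes the main term nonzero, which is given by the Green--Tao asymptotic (or classical Vinogradov). There is no real obstacle; the only check is that $K$ captures exactly the support and that the bounds on $\psi_i(0)$ hold with $X=N$.
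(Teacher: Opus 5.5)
Your proposal is correct and is the same route the paper takes: the paper presents this as an immediate corollary of Theorem~\ref{thm:lin-eq-PS}. Your specialization ($d=2$, $t=3$, $L=1$, $X=N$, forms $n_1$, $n_2$, $N-n_1-n_2$, and $K$ the triangle) checks the hypotheses correctly, including that lattice points on the boundary of $K$ contribute nothing because $\Lambda$ and $\Lambda_\gamma$ vanish at $0$; you are also right that the oddness of $N$ plays no role in the comparison.
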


This is previously known using the circle method, first due to Balog and Friedlander \cite{BF} for $\gamma > 20/21$. The current best record for the range of $\gamma$ is $\gamma > 64/73$ due to Shanshan Du, Hao Pan and the second author \cite{SDP}. For lower bound results instead of asymptotic results, the range for $\gamma$ can be improved further; see \cite{SDP} and the references therein for works in this direction.

\begin{corollary*}[$k$-APs in Piatetski-Shapiro primes]
Let $X \geq 3$. For any positive integer $k \geq 3$ we have
$$
\sum_{\substack{1 \leq n,m \leq X \\ n+(k-1)m \leq X}} \prod_{i=0}^{k-1} \Lambda_{\gamma}(n + im) = \sum_{\substack{1 \leq n,m \leq X \\ n+(k-1)m \leq X}} \prod_{i=0}^{k-1} \Lambda(n + im) + o(X^2)
$$
for $\gamma \in (0,1)$ provided that $1-\gamma$ is sufficiently small in terms of $k$.
\end{corollary*}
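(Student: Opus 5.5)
The statement is the special case of Theorem~\ref{thm:lin-eq-PS} with $d=2$, $t=k$ and $L=k$. I would simply verify the hypotheses of that theorem. Take $\ve{n}=(n,m)\in\Z^2$ and the affine-linear forms $\psi_i(n,m)=n+im$ for $i=0,\dots,k-1$. Each $\dot{\psi_i}=(1,i)$ satisfies $\|\dot{\psi_i}\|_\infty\le k-1\le L$, and $\psi_i(0)=0$.

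Distinct forms are linearly independent. Indeed, for $i\neq j$ we have $\det\begin{pmatrix}1&i\\1&j\end{pmatrix}=j-i\neq0$, so $\dot{\psi_i}$ and $\dot{\psi_j}$ are linearly independent and the system has finite complexity.

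For the region, take $K=\{(x,y)\in\R^2: 1\le x,\ 1\le y,\ x+(k-1)y\le X\}$. This is an intersection of half-planes, hence convex. Since $x+(k-1)y\le X$ forces $x,y\le X$, it is contained in $[-X,X]^2$. Moreover $K\cap\Z^2$ is exactly the summation range in the corollary, because the constraints $n\le X$ and $m\le X$ are implied by $n+(k-1)m\le X$ for positive $n,m$.

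Theorem~\ref{thm:lin-eq-PS} therefore gives the stated identity with error $o_{2,k,k}(X^2)=o(X^2)$, once $1-\gamma$ is sufficiently small in terms of $k$. No step here is a real obstacle: all the depth lies in Theorem~\ref{thm:lin-eq-PS}. The only points needing care are checking convexity and containment of $K$ and confirming that the dependence on $(d,t,L)$ reduces to dependence on $k$ alone.
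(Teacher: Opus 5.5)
Your proposal is correct and matches the paper's approach: the paper states this corollary as an immediate consequence of Theorem~\ref{thm:lin-eq-PS}, applied to the forms $\psi_i(n,m)=n+im$ for $0\le i\le k-1$ on the triangle $K$. Your checks of pairwise linear independence, the coefficient bounds, and the convexity of $K$ and its containment in $[-X,X]^2$ are exactly the routine verifications the paper leaves implicit. The fact that $K$ degenerates for $X\le k$ does not matter for an $o(X^2)$ statement.
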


This asymptotic result is new for $k \geq 4$. On the other hand, Li and Pan \cite{LiPan} proved the existence of infinitely many $k$-APs in Piatetski-Shapiro primes for every $k \geq 3$, provided that $1-\gamma$ is sufficiently small in terms of $k$. The dependence of $1-\gamma$ on $k$ in their work is an exponential tower of height $3$:
$$
1-\gamma < 2^{-2^{k^2 \cdot 4^k}}.
$$
We greatly improve the quantitative dependence of $1-\gamma$ on $k$ by proving the following theorem.

\begin{theorem}\label{t:improvement_li-pan}
Let $k \ge 3$. Suppose that
\[
1-2^{-Ck}< \gamma < 1
\]
for some sufficiently large absolute constant $C$. Then there are infinitely many non-trivial $k$-term arithmetic progressions in Piatetski-Shapiro primes in $\PS_{1/\gamma}$.
\end{theorem}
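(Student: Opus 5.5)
Because we only need a lower bound for $k$-APs, and not an asymptotic, I would prove Theorem~\ref{t:improvement_li-pan} with a relative Szemer\'edi theorem rather than through the full nilsequence machinery. The aim is to show that, after a $W$-trick, $\Lambda_\gamma$ is dense relative to a pseudorandom majorant. The pseudorandomness we need is weaker than the full linear forms condition: the ``dense model'' theorem of Conlon--Fox--Zhao only needs a linear forms condition for a bounded number of forms, of complexity $O(k)$, together with a Gowers $U^{k-1}$-type comparison.

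\textbf{Setup.} Fix $W=\prod_{p\le w}p$ and a residue class $b$ coprime to $W$. I would use the Fourier expansion of the indicator of $\PS_{1/\gamma}$:
\[
1_{n\in \PS_{1/\gamma}} = \lfloor -n^\gamma\rfloor - \lfloor -(n+1)^\gamma\rfloor = \big((n+1)^\gamma - n^\gamma\big) + \psi(-(n+1)^\gamma)-\psi(-n^\gamma),
\]
where $\psi(x)=\{x\}-1/2$. This splits $\Lambda_\gamma$ into a main part, essentially $\Lambda$ (up to the smooth weight $n^{1-\gamma}((n+1)^\gamma-n^\gamma)/\gamma = 1+O(1/n)$), plus an error $E(n)=\gamma^{-1}n^{1-\gamma}\Lambda(n)\,\Delta\psi(-n^\gamma)$.

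\textbf{Majorant.} Take the Green--Tao majorant $\nu$ for $\Lambda$ restricted to $Wn+b$. Then $\Lambda_\gamma$ is pointwise bounded by $n^{1-\gamma}\nu\cdot 1_{\PS}$, not by $\nu$, so I would define the new majorant as $\nu_\gamma := \nu\cdot \gamma^{-1}n^{1-\gamma}1_{\PS_{1/\gamma}}$. It suffices to verify the linear forms condition for $\nu_\gamma$ for the $O(k2^k)$ forms coming from the $U^{k-1}$ analysis (or the smaller system required by Conlon--Fox--Zhao). Expanding each $1_{\PS}$ via the sawtooth gives a main term equal to the linear forms sum for $\nu$, which is $1+o(1)$ by Green--Tao. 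The error terms are products in which at least one factor is $\Delta\psi(-\psi_i(\mathbf n)^\gamma)$. For these I would use the Vaaler trigonometric approximation with $H\approx X^{1-\gamma+\eps}$ frequencies. This reduces matters to bounding exponential sums
\[
\sum_{\mathbf n} \nu\text{-weights}\cdot e\Big(\sum_i h_i \psi_i(\mathbf n)^\gamma\Big).
\]
Since $\nu$ is a short truncated divisor sum $\sum_{d_i\mid \psi_i(\mathbf n)}\lambda_{d_i}$ with $d_i\le X^{\eta}$, we can swap sums and restrict to congruence classes. This leaves sums of $e(f(\mathbf n))$ over lattice boxes, with $f$ smooth of size $|h|X^\gamma$.

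\textbf{Main obstacle.} The hard step is making this saving strong enough. Fixing all coordinates except one and applying van der Corput (exponent pairs) in that variable, I expect a power saving $X^{-c(1-\gamma)}$ only once $1-\gamma$ is small compared with the number of forms involved. The requirement $1-\gamma<2^{-Ck}$ should come from this: the system has about $k2^{k}$ forms, and the divisor-sum level $\eta$ must be chosen comparable to $1-\gamma$. The key point is that every bound here is polynomial in the number of forms, which avoids the tower-type losses in Li--Pan.

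\textbf{Finishing.} With the linear forms condition for $\nu_\gamma$ in hand, apply the relative Szemer\'edi theorem to $f=\Lambda_\gamma(Wn+b)\phi(W)/W$. This $f$ is majorized by $\nu_\gamma$ and has mean $\gg 1$ by the Piatetski-Shapiro prime number theorem in arithmetic progressions (again via the sawtooth decomposition, valid for $\gamma$ near $1$). The theorem then gives $\gg X^2$ weighted $k$-APs. Trivial progressions ($m=0$) contribute $O(X^{1+o(1)}\cdot X^{(1-\gamma)k})=o(X^2)$ for such $\gamma$, so nontrivial $k$-APs exist for every large $X$, and hence there are infinitely many.
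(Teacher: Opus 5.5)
Your architecture is the paper's. You use the Conlon--Fox--Zhao relative Szemer\'edi theorem. Your majorant is $\frac{1}{\gamma}\frac{\phi(W)}{W}(Wn+b)^{1-\gamma}\frac{\Lambda_R(Wn+b)^2}{\log R}1_{Wn+b\in\PS_{1/\gamma}}$ with sieve level $R=N^{\epsilon_0(1-\gamma)}$. You check the linear forms condition for the $\asymp k2^{k}$ forms via the sawtooth/Vaaler expansion, as in Li--Pan. The gap is that the one estimate separating $2^{-Ck}$ from Li--Pan's tower is only ``expected''. Take $s\le k2^{k-1}$ forms $\psi_i(x)=\alpha_ix+\beta_i$ in the free variable and $1\le|h_i|\le N^{O(s(1-\gamma))}$, and put $F(x)=\sum_{i\le s}h_i\psi_i(x)^\gamma$. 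You must show that $\sum_{x\in\mathcal I}e(F(x))$ saves $N^{-Cs(1-\gamma)}$ with $C$ large, not merely $N^{-c(1-\gamma)}$. The reason is that up to $s$ sawtooth factors each bring $\approx N^{1-\gamma}$ frequencies, and $\|\nu\|_\infty\ll N^{1-\gamma+o(1)}$ costs more. Because the $h_i$ have arbitrary signs, the low-order derivatives of $F$ can cancel. So ``fix one variable and apply van der Corput'' has nothing to act on until you prove a non-degeneracy statement. The paper identifies the derivative lower bound as exactly where Li--Pan lose. Exponent pairs do not apply directly either, since they need monomial-type control of every derivative, which $F$ lacks.

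The paper's Lemma~\ref{l:multi_linear_form} supplies the missing input. Let $\psi_{\max}(x_0)=\max_i|\psi_i(x_0)|$, $v_i=h_i\psi_i(x_0)^\gamma$ and $c_i=\alpha_i\psi_{\max}(x_0)/\psi_i(x_0)$. Taylor expanding at $x_0$, the $j$-th coefficient is the moment $\sum_i v_ic_i^j$, up to the factor $\binom{\gamma}{j}\psi_{\max}(x_0)^{-j}$. Suppose the forms are well separated, $|\alpha_i\beta_j-\alpha_j\beta_i|\gg N^{1-O(s(1-\gamma))}$. Then inverting the Vandermonde system $(c_i^j)_{1\le i,j\le s}$ shows that some moment of order $2\le j\le s+1$ has essentially full size, up to a loss $N^{-O(s^3(1-\gamma))}$. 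The complementary region, where two forms nearly coincide (and e.g.\ $h_i=-h_j$ nearly kills the phase), must be excised and bounded trivially. One must then turn a large coefficient of order up to $s+1\asymp k2^k$ into a power saving with losses polynomial in $s$. The paper does this on short intervals with Weyl's inequality in the strength given by the Vinogradov mean value theorem \cite{B17}, whose exponent is polynomial in the degree. Then $1-\gamma\ll s^{-O(1)}=2^{-O(k)}$ suffices. Classical Weyl differencing (exponent $2^{1-J}$ for degree $J>s$) would only give a doubly exponential range. A second-derivative test, combined with a sublevel-set bound for $|F''|$ deduced from the large higher derivative, is another conceivable route, but some such argument has to be given. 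As written, your assertion that every bound is polynomial in the number of forms is exactly the unproved content of the theorem.
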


There are two key ingredients in the proof of Theorem \ref{thm:lin-eq-PS}: a discorrelation estimate for Piatetski-Shapiro primes twisted by nilsequences (Theorem \ref{BF-nil}) and the construction of a pseudorandom majorant for Piatetski-Shapiro primes (Proposition \ref{prop:majorant}). These are stated and discussed in Section \ref{sec:proof-outline}. 

\subsection*{Acknowledgements}

The authors are grateful for the organizers of the summer school on analytic number theory held at Xi'an Jiaotong University in June 2025, where this work started. XS was supported by NSF grant DMS-2452462.

\section{Proof ingredients and outline}\label{sec:proof-outline}

The key ingredient in the proof of Theorem \ref{thm:lin-eq-PS} is the following discorrelation estimates for Piatetski-Shapiro primes with nilsequences.

\begin{theorem}[Balog-Friedlander condition for nilsequences]\label{BF-nil}
Let $N \geq 3$. Let $G/\Gamma$ be a filtered nilmanifold of some degree $d$ and dimension $D$, and complexity at most $1/\delta$ for some $\delta \in (0,1)$, and let $F: G/\Gamma \rightarrow\C$ be a Lipschitz function of norm at most $1/\delta$.
Let $\gamma\in (0,1)$ be real such that $1-\gamma$ is sufficiently small in terms of $d,D$. Then for any polynomial sequence $g \in \Poly(\Z\rightarrow G)$ we have
$$
\frac{1}{\gamma}\sum_{\substack{n \leq N \\ n \in \PS_{1/\gamma}}} n^{1-\gamma} \Lambda(n) F(g(n)\Gamma) = \sum_{n \leq N} \Lambda(n) F(g(n)\Gamma) + O_{d,D}(\delta^{-O_{d,D}(1)}N^{1-c_{d,D}})
$$
for some constant $c_{d,D} > 0$ sufficiently small in terms of $d,D$.
\end{theorem}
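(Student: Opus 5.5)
We begin with the standard reduction of the Piatetski-Shapiro condition to a sawtooth expression. An integer $n\ge 1$ lies in $\PS_{1/\gamma}$ if and only if there is an integer $m$ with $n\le m^{1/\gamma}<n+1$, that is, $n^\gamma\le m<(n+1)^\gamma$. Hence
\[
1_{n\in\PS_{1/\gamma}}=\lfloor -n^\gamma\rfloor-\lfloor -(n+1)^\gamma\rfloor .
\]
Write $\psi(x)=x-\lfloor x\rfloor-1/2$. This gives
\[
1_{n\in\PS_{1/\gamma}}=\big((n+1)^\gamma-n^\gamma\big)+\psi(-(n+1)^\gamma)-\psi(-n^\gamma).
\]
The main term $(n+1)^\gamma-n^\gamma=\gamma n^{\gamma-1}+O(n^{\gamma-2})$. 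After multiplying by $n^{1-\gamma}/\gamma$, it contributes $\sum_{n\le N}\Lambda(n)F(g(n)\Gamma)+O(\delta^{-1}\log N)$. It therefore suffices to show
\[
\sum_{n\le N}n^{1-\gamma}\Lambda(n)F(g(n)\Gamma)\big(\psi(-(n+1)^\gamma)-\psi(-n^\gamma)\big)\ll \delta^{-O(1)}N^{1-c}.
\]
By dyadic decomposition and partial summation it is enough to treat $n\sim N$ with the weight $n^{1-\gamma}\approx N^{1-\gamma}$ removed. We then need a saving of $N^{1-\gamma+c}$ over the trivial bound for $\sum_{n\sim N}\Lambda(n)F(g(n)\Gamma)\Delta\psi(n)$.

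Next we expand $\psi$ in a truncated Fourier series (Vaaler):
\[
\psi(x)=-\sum_{0<|h|\le H}\frac{e(hx)}{2\pi i h}+O\Big(\sum_{|h|\le H}c_h e(hx)\Big),\qquad H=N^{1-\gamma+\eta},
\]
with $c_h\ll 1/H$. The error is nonnegative and is summed with $\Lambda\ll\log N$ and $|F|\le 1/\delta$. Its $h=0$ term gives $N/H$, and its other terms are controlled by exponential sums $\sum_{n\sim N}e(hn^\gamma)$, which are handled by van der Corput. Multiplied by $N^{1-\gamma}$, this is $O(N^{1-\eta})$. In the main Fourier terms, the difference $e(-h(n+1)^\gamma)-e(-hn^\gamma)$ equals $e(-hn^\gamma)\phi_h(n)$, where $\phi_h(n)=e(-h((n+1)^\gamma-n^\gamma))-1\ll |h|N^{\gamma-1}$ is slowly varying. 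Removing $\phi_h$ by partial summation, the task reduces to proving
\[
\sum_{n\sim N}\Lambda(n)F(g(n)\Gamma)e(hn^\gamma)\ll \delta^{-O(1)}N^{1-2\eta}\qquad(1\le|h|\le H),
\]
since summing $|h|N^{\gamma-1}/|h|$ over $h\le H$ costs only $N^{\eta}$.

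To prove this bound we would apply a combinatorial decomposition (Heath-Brown or Vaughan) to $\Lambda$, which produces type I sums $\sum_{m}a_m\sum_{\ell}F(g(m\ell)\Gamma)e(h(m\ell)^\gamma)$ and type II bilinear sums. For type II we use Cauchy--Schwarz in $m$ and van der Corput differencing in $\ell$ to eliminate $a_m,b_\ell$. This leaves correlations of $F\otimes\bar F$ along $g(m\ell)$, $g(m\ell')$ twisted by $e(h m^\gamma(\ell^\gamma-\ell'^\gamma))$. The key point is that $hn^\gamma$ with $h\le N^{1-\gamma+\eta}$ and $1-\gamma$ tiny is, on short intervals of length $N^{1-\kappa}$, well approximated by a polynomial phase of bounded degree, and that polynomial phase can be absorbed into an extended nilsequence on $G\times\R/\Z$. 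We then split the range into short intervals of length $N^{1-\kappa}$ and Taylor-expand $hn^\gamma$ to degree $d'=O(1/\kappa)$. The error is $\ll HN^{\gamma}(N^{-\kappa})^{d'+1}$, which is negligible, and this is where $1-\gamma$ small in terms of $d,D$ is used.

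The main obstacle is the following dichotomy in the resulting short-interval sums $\sum_{n\in I}\Lambda(n)\tilde F(\tilde g(n)\Gamma)$. Here we would invoke the Green--Tao quantitative equidistribution theorem. In the equidistributed case, the Möbius/von Mangoldt nilsequence estimates on short intervals (via type I/II) give a power saving. In the non-equidistributed case, we factorize $\tilde g=\epsilon g' \gamma_0$ and pass to a subnilmanifold; there the Taylor-coefficient phase $h n^\gamma$ itself must be shown to be non-degenerate. Its top Taylor coefficients are of size roughly $hN^{\gamma}N^{-j\kappa}$ with irrational-like behaviour in $h$ and the interval position. The expected difficulty is ensuring that the polynomial part from $hn^\gamma$ cannot be cancelled by the nilsequence's horizontal characters for many $h$ and intervals simultaneously. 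We would handle this with a van der Corput $B$-process/exponent-pair bound on the derivatives $h\gamma(\gamma-1)\cdots n^{\gamma-j}$, which are genuinely non-polynomial across intervals. The resulting bounds are all polynomial in $\delta$, with exponents depending on $d$ and $D$.
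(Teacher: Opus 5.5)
\paragraph{Verdict.}
Your preliminary reductions are the paper's, but the core estimate for $|h|\ge1$ is missing, so the outline does not yet prove the theorem.

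\paragraph{The reductions.}
The reductions are the sawtooth identity, Vaaler truncation at $H\approx N^{1-\gamma+\eta}$, van der Corput for the error term, and removal of $e(-h(n+1)^\gamma)-e(-hn^\gamma)=e(-hn^\gamma)\phi_h(n)$ by partial summation. The last step is actually more careful than a first-order Taylor expansion, since $|h|N^{\gamma-1}$ is not small for $|h|$ near $H$.

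One bookkeeping correction. With the weight $n^{1-\gamma}$ restored, the main term is $\ll\sum_{1\le|h|\le H}\max_M|\sum_{N<n\le M}\Lambda(n)F(g(n)\Gamma)e(hn^\gamma)|$. So your per-$h$ bound $N^{1-2\eta}$ suffices only if $\eta\ge 1-\gamma+c$, and each single $h$ needs a saving of about $HN^{c}$. \emph{This} is where $1-\gamma$ small in terms of $d,D$ is used, because the available inverse theorems give savings only polynomial in $\delta$, with exponents depending on $d,D$. It is not used in the Taylor remainder, which is negligible for every $\gamma$ once the degree is large.

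\paragraph{The gap.}
You leave the core estimate for $|h|\ge1$ as an unresolved ``main obstacle''. Your type I/II plus Cauchy--Schwarz step produces $F\otimes\overline{F}$ correlations that the rest of your plan never uses, since you then revert to short-interval sums of $\Lambda$. The type I/II analysis should instead be packaged in a short-interval inverse theorem for $\Lambda$ twisted by nilsequences (as in Matom\"aki--Shao--Tao--Ter\"av\"ainen). It is applied on each $(n_0,n_0+L]$ to $n\mapsto(Q(n_0,n),g(n))$, where $Q(n_0,\cdot)$ is the Taylor polynomial of $hn^\gamma$ at $n_0$. Two ideas are then missing.

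First, the abelian case ($F$ constant or $G$ abelian, the base of any induction on $\dim G$). Here ``equidistributed implies power saving'' is false on short intervals. The obstruction includes Archimedean characters $n^{iT}$ with $|T|$ up to $\delta^{-O(1)}(N/L)^{O(1)}$. On $(n_0,n_0+L]$ the phase $e(hn^\gamma)$ agrees to first order with $n^{iT}$ for $T=2\pi h\gamma n_0^{\gamma}$. You need a combination of Taylor coefficients that is blind to $T$. The paper uses $\beta_1+2n_0\beta_2$, whose $h$-part is $h\gamma^2n_0^{\gamma-1}\neq0$.

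Second, the non-equidistributed case. The lever here is the dependence on the interval position $n_0$, not on many $h$ (a single bad $h$ suffices), and it is exploited by a recurrence argument rather than a $B$-process.
\begin{enumerate}
\item Reduce to $F$ with a central frequency $\xi$. Quotient $\R\times G$ by $\ker\tilde\xi$, where $\tilde\xi(x,z)=x+\xi(z)$, so that the center stays one-dimensional.
\item The inverse theorem then gives, after pigeonholing, one horizontal character $(k,\chi)$ with smoothness norm $\le\delta^{-O(1)}$ on $\gg\delta^{O(1)}N$ intervals.
\item Since $g$ is a single global polynomial sequence, $\chi\circ g$ contributes to the $\ell$-coefficient a polynomial $R_1(n_0)$ of degree at most $d$. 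By contrast, $khn^\gamma$ contributes $kh\gamma n_0^{\gamma-1}$.
\item If $k\neq0$, then $kh\gamma n_0^{\gamma-1}+R_1(n_0)$ lies in an arc of length $\delta^{-O(1)}/L$ for many $n_0\sim N$. A recurrence lemma for $An^\beta+P(n)$ with $\beta\notin\Z$ then forces $|h|N^\gamma\le\delta^{-O(1)}N/L$, contradicting $|h|\ge1$. That lemma goes by Erd\H{o}s--Tur\'an, then Taylor-expanding $An^\beta$ to degree exceeding $\deg P$ so its top coefficient is unaffected by $P$, then Weyl.
\item If $k=0$, the factorization lemma moves $g$ into a lower-dimensional subgroup of $G$, after freezing the smooth and rational factors on short progressions. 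This leaves $e(hn^\gamma)$ untouched.
\end{enumerate}
So the argument is an induction on $\dim G$, with the polynomial-phase case as its base.
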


For clarity, we point out that the Lipschitz norm of a function $F: G/\Gamma\rightarrow \C$ is defined to be the quantity
$$
\sup_{x \in G/\Gamma}|F(x)| + \sup_{\substack{x, y \in G/\Gamma \\ x \neq y}} \frac{|F(x)-F(y)|}{d_{G/\Gamma}(x,y)},
$$
where $d_{G/\Gamma}$ is a metric on $G/\Gamma$ defined using datas associated to the nilmanifold $G/\Gamma$. In particular, if $F$ has Lipschitz norm at most $1/\delta$, then $\|F\|_{\infty} \leq 1/\delta$.

The precise definitions of filtered nilmanifolds and the associated data appearing in the statement can be found in Section \ref{sec:prelim} below. For the moment, the  readers are encouraged to consider the special cases when nilsequences are polynomial exponential phase functions: $G/\Gamma = \R/\Z$ (which has dimension $D = 1$ and complexity $O(1)$), $g$ is a polynomial with real coefficients of degree $d$, and $F(x) = e(x) := e^{2\pi ix}$ for $x \in \R/\Z$ (which has Lipschitz norm $O(1)$).

\begin{corollary*}[Balog-Friedlander condition for polynomial phases]
Let $N \geq 3$. Let $g$ be a polynomial with real coefficients of degree $d$. Let $\gamma \in (0,1)$ be real such that $1-\gamma$ is sufficiently small in terms of $d$. Then we have
$$
\frac{1}{\gamma}\sum_{\substack{n \leq N \\ n \in \PS_{1/\gamma}}} n^{1-\gamma} \Lambda(n) e(g(n)) = \sum_{n \leq N} \Lambda(n) e(g(n)) + O_{d}(N^{1-c_d})
$$
for some constant $c_d > 0$ sufficiently small in terms of $d$.
\end{corollary*}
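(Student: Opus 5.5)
This corollary is the special case of Theorem \ref{BF-nil} in which the nilsequence is a polynomial phase, so the plan is to specialize and check the parameters. Take $G = \R$ and $\Gamma = \Z$, so that $G/\Gamma = \R/\Z$ has dimension $D=1$. Give it the degree-$d$ filtration
$$
G_0 = G_1 = \cdots = G_d = \R, \qquad G_{d+1} = \{0\}.
$$
With the standard Mal'cev basis (the single vector $1$), this filtered nilmanifold has complexity $O_d(1)$. Since $G$ is abelian, every real polynomial $g$ of degree at most $d$ lies in $\Poly(\Z \to G)$ for this filtration. This holds because the $(d+1)$-st discrete derivative of $g$ vanishes and all lower derivatives lie in $G_i = \R$. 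Finally take $F(x) = e(x)$ on $\R/\Z$. With respect to the standard metric on $\R/\Z$, $F$ is bounded by $1$ and is Lipschitz with constant $2\pi$, so its Lipschitz norm is $O(1)$.

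Choose $\delta = c$ for a small absolute constant $c$, so that both the complexity bound $1/\delta$ and the Lipschitz bound $1/\delta$ hold; the quantity $1/\delta$ then depends only on $d$. Theorem \ref{BF-nil} applies with $D=1$ once $1-\gamma$ is sufficiently small in terms of $d$ (and $D=1$). Since $F(g(n)\Gamma) = e(g(n))$, its conclusion is exactly the displayed identity, with error
$$
O_{d,1}\bigl(\delta^{-O_{d,1}(1)} N^{1-c_{d,1}}\bigr) = O_d(N^{1-c_d}),
$$
where we set $c_d := c_{d,1}$.

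The only point needing care is the filtration bookkeeping. One must confirm that Section \ref{sec:prelim} allows a filtration with all $G_i$ equal to $G$ up to step $d$, and that the complexity of $\R/\Z$ under this filtration is bounded. If the paper's conventions insisted on $G_1 = G$ with a strictly decreasing filtration, I would instead use the lift $G = \R^d$ with $\Gamma = \Z^d$, map $g(n) = \sum_j \alpha_j n^j$ to the polynomial sequence $(\alpha_1 n, \dots, \alpha_d n^d)$, and take $F(x_1, \dots, x_d) = e(x_1 + \cdots + x_d)$. This setup has dimension $D = d$ and complexity and Lipschitz norm $O_d(1)$, and the same deduction goes through. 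Nothing analytic is needed beyond Theorem \ref{BF-nil}.
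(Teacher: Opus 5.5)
Your proposal is correct and follows essentially the same route as the paper: the corollary comes from specializing Theorem \ref{BF-nil} to $G/\Gamma=\R/\Z$ (dimension $D=1$, complexity $O(1)$) with $F(x)=e(x)$ of Lipschitz norm $O(1)$, so that taking $\delta$ an absolute constant turns the error $O_{d,1}(\delta^{-O(1)}N^{1-c_{d,1}})$ into $O_d(N^{1-c_d})$. Your fallback via $\R^d/\Z^d$ is not needed, since the paper's definitions allow the constant filtration $G_0=\cdots=G_d=\R$ on an abelian group.
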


This corollary is not new. When $d=1$, it is proved by Balog and Friedlander \cite{BF} in their work on Vinogradov's three primes theorem with Piatetski-Shapiro primes, and hence the name ``Balog-Friedlander condition". When $d =2$, this is studied in \cite{ZZ}. For general $d$, this is studied in \cite{AG}. 

To prove Theorem \ref{BF-nil}, we first use standard Fourier approximation manipulation to reduce the task to estimating sums of the form
$$
\sum_{n \sim N} \Lambda(n) e(hn^{\gamma}) F(g(n)\Gamma)
$$
for $h$ in an appropriate range. We then make the simple yet powerful observation that the fractional power $n^{\gamma}$ has a power series expansion around any fixed $n_0 \sim N$, and thus $n^{\gamma}$ can be approximated by a polynomial of suitable degree on short intervals of the form $(n_0, n_0+L]$, where $L \leq N^{1-\eps}$. While this observation is not entirely new in the study of Piatetski-Shapiro primes, it allows us to handle the potentially difficult interaction between $e(hn^{\gamma})$ and $F(g(n)\Gamma)$ in our setting. Due to the need to restrict to short intervals, we establish the following estimate for primes twisted by nilsequences.

\begin{theorem}\label{prime-hngamma-nil}
Fix $d,D,K$, a non-integer $\gamma \in \R$, and $\eps > 0$. Let $N \geq 3$ and $N^{3/5+\eps} \leq L \leq N^{1-\eps}$. Let $h$ be real with $|h| \leq N^K$.  Let $G/\Gamma$ be a filtered nilmanifold of degree $d$ and dimension $D$, and complexity at most $1/\delta$ for some $\delta \in (0,1/\log N)$, let $F: G/\Gamma \rightarrow\C$ be a Lipschitz function of norm at most $1/\delta$, and let $g \in \Poly(\Z^2\rightarrow G)$ be a polynomial sequence. Suppose that
$$
\sum_{n_0 \sim N} \Big|\sum_{n_0 <  n \leq n_0+L} \Lambda(n) e(hn^{\gamma}) F(g(n_0,n)\Gamma)\Big|^* \geq \delta NL.
$$
Then $|h|N^{\gamma} \leq \delta^{-O(1)}(N/L)^2$.
\end{theorem}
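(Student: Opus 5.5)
We first reduce to polynomial phases on short intervals and then invoke a short-interval discorrelation estimate for primes against nilsequences. Fix $n_0\sim N$ and write $n=n_0+m$ with $0<m\le L$. By Taylor expansion,
$$
h(n_0+m)^\gamma=\sum_{j=0}^{J} h\binom{\gamma}{j}n_0^{\gamma-j}m^j+O\big(|h|N^{\gamma}(L/N)^{J+1}\big).
$$
Since $L\le N^{1-\eps}$ and $|h|\le N^K$, choosing $J=J(K,\eps)$ large makes the error $O(N^{-1})$, which is negligible. Thus $e(hn^\gamma)=e(P_{n_0}(n))+O(N^{-1})$, where $P_{n_0}$ is a polynomial of degree $J$ whose coefficients depend on $n_0$. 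We absorb $e(P_{n_0}(n))$ into the nilsequence by forming the product nilmanifold $G\times\R$ with the product filtration, extended to degree $\max(d,J)$. The sequence $\tilde g(n_0,n)=(g(n_0,n),P_{n_0}(n))$ is polynomial in $n$ for each $n_0$, and $\tilde F=F\otimes e$. The hypothesis then gives, for $\gg\delta N$ values of $n_0$, the bound $\big|\sum_{n_0<n\le n_0+L}\Lambda(n)\tilde F(\tilde g(n_0,n)\Gamma)\big|\gg \delta L$. The $|\cdot|^*$ (maximal over subintervals) formulation is preserved throughout.

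Next we apply an inverse theorem for primes in short intervals twisted by nilsequences. For $L\ge N^{3/5+\eps}$, Heath-Brown's identity reduces the problem to type I and type II sums in this range; this is the source of the exponent $3/5$, as in Matomäki--Shao-type results. The output of such an estimate is that $\tilde F(\tilde g(n_0,\cdot))$ must be non-equidistributed on the interval, and more precisely that $\tilde g$ restricted to $(n_0,n_0+L]$ is $\delta^{-O(1)}$-close to a rational sequence in a structured sense. Via the quantitative Leibman theorem of Green--Tao, this yields a nontrivial horizontal character $\eta$ of size $\delta^{-O(1)}$ with $\|\eta\circ\tilde g(n_0,n_0+\cdot)\|_{C^\infty[L]}\le\delta^{-O(1)}$. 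The character is $\eta=(\eta_1,k)$ with $k\in\Z$. If $k=0$, then $\eta_1$ is a character of $G$ alone and $e(P_{n_0})$ plays no role. We avoid this degenerate case by first factorizing $g$: the dichotomy is obtained by running the same argument without the $e(hn^\gamma)$ factor after the factorization, so that only $k\neq0$ remains.

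Finally, we exploit the fact that $n_0$ varies. With $k\ne0$, the $C^\infty$ smallness controls the coefficients of $m^2$ at scale $L^2$. The $m^2$ coefficient of $\eta\circ\tilde g$ is $k h\binom{\gamma}{2}n_0^{\gamma-2}+a(n_0)$, where $a(n_0)$ is the contribution of $\eta_1\circ g(n_0,n_0+\cdot)$. Since $g\in\Poly(\Z^2\to G)$, the function $a(n_0)$ is a polynomial in $n_0$ of bounded degree. We obtain $\|q(khc\, n_0^{\gamma-2}+a(n_0))\|_{\R/\Z}\le\delta^{-O(1)}/L^2$ for $\gg\delta N$ values of $n_0$, with $q\le\delta^{-O(1)}$. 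Because $\gamma$ is not an integer, $n_0^{\gamma-2}$ is not a polynomial. Taking a large number of finite differences in $n_0$ (or applying a pigeonhole/Vinogradov-type lemma on dense sets of $n_0$) kills $a$ and leaves $|h|N^{\gamma-2}\cdot(\text{difference scale})$ controlled. Alternatively, we can compare $n_0$ with $n_0+r$ for $r\le L$ within the same short interval, where the derivative $hN^{\gamma-2}/N$ cannot hide. Either way this yields $|h|N^{\gamma-2}\le\delta^{-O(1)}L^{-2}$, that is, $|h|N^\gamma\le\delta^{-O(1)}(N/L)^2$.

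The main obstacle is this last step: disentangling the non-polynomial $n_0$-dependence from the polynomial contribution $a(n_0)$ of $g$, together with the degenerate $k=0$ case. We would handle both through a careful factorization of $g$ before introducing the phase.
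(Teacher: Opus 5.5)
There is a genuine gap at your inverse-theorem step. You assert that a large correlation of $\Lambda$ with $\tilde F(\tilde g(n_0,\cdot))$ on $(n_0,n_0+L]$ produces a nontrivial horizontal character $\eta$ with $\|\eta\circ\tilde g(n_0,n_0+\cdot)\|_{C^\infty[L]}\le\delta^{-O(1)}$. This fails in the abelian case, already for $F\equiv 1$, where $\tilde F(\tilde g)$ is just the Taylor polynomial of $e(hn^\gamma)$.

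Type I/II information from Heath-Brown's identity cannot rule out correlation of $\Lambda$ with Archimedean characters $n^{iT}$, $|T|\le (N/L)^{O(1)}$. On $(n_0,n_0+L]$ the phase $e(hn^\gamma)$ coincides, up to a constant phase and an error $O(|h|N^\gamma (L/N)^2)$, with $n^{iT}$ for some $|T|\asymp |h|N^\gamma$. When $N/L\ll|h|N^\gamma\ll (N/L)^2$, this phase has unbounded $C^\infty((n_0,n_0+L])$ norm, yet nothing available excludes the hypothesis.

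The correct output in this case is Theorem~\ref{primes-short-intervals}(2): $e(P(n))n^{-iT}$ has bounded total variation. This controls only the $n^{iT}$-invariant combinations $q(j\beta_j+(j+1)n_0\beta_{j+1})$ of the Taylor coefficients, and only at the weaker scale $N/L^{j+1}$. So your bound on the $m^2$ coefficient at scale $L^{-2}$ is not available, since an $n^{iT}$ twist shifts it by $\asymp |T|/N^2$. The paper instead uses $\beta_1+2n_0\beta_2=h\gamma^2n_0^{\gamma-1}+R(n_0)$, known mod $1/q$ to within $\delta^{-O(1)}N/L^2$. This is exactly where the threshold $(N/L)^2$ comes from. (If your smoothness claim were true, the linear coefficient alone would give $|h|N^\gamma\le\delta^{-O(1)}N/L$. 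That would mean power savings for short sums of $\Lambda(n)n^{iT}$ with $N/L\ll |T|\ll (N/L)^2$, which Dirichlet-polynomial methods give only with a $(\log N)^{-A}$ saving.)

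What is missing is the abelian/non-abelian dichotomy, organized as an induction on $D$:
\begin{enumerate}
\item Reduce to $F$ oscillating with a central frequency $\xi$, and quotient by $\ker\xi$ when it is positive-dimensional.
\item Handle abelian $G$ (and constant $F$) by the polynomial argument above, i.e.\ Proposition~\ref{primes-hngamma-poly}.
\item Only for non-abelian $G$ with one-dimensional center and nontrivial $\xi$ does Theorem~\ref{primes-short-intervals}(1) give a genuine horizontal character. Even then your product $\R\times G$ has center of dimension at least two. You must pass to $(\R\times G)/\ker\tilde\xi$ with $\tilde\xi(x,z)=x+\xi(z)$ and check that its center is one-dimensional. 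In that case your use of the $m^2$ coefficient would be legitimate; the paper uses the linear coefficient and gets the stronger $N/L$.
\end{enumerate}

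Your treatment of $k=0$ does not work as written: dropping the factor $e(hn^\gamma)$ gives no information on $h$. The paper keeps the phase and uses Lemma~\ref{lem:factor} to write $g=sg'r$ with $g'$ taking values in a subgroup of dimension $D-1$. It then restricts to progressions of step $q$ and length $\delta^{C}L$ on which $s$ is essentially constant and $r\Gamma$ is fixed, and applies the induction hypothesis.

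Finally, finite differencing in $n_0$ is not directly available on an unstructured set of $\gg\delta^{O(1)}N$ values of $n_0$. You need a recurrence statement for $An^\beta+P(n)$ lying in a short arc, as in Proposition~\ref{prop:recurrence} (Erd\H{o}s--Tur\'an plus a Weyl-sum bound for $e(An^\beta+P(n))$).
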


Here the notation $|\cdot |^*$ is defined as follows. If $I \subset \Z$ is an interval and $f: I\rightarrow \C$ is a function, then
$$
\Big|\sum_{n \in I}f(n)\Big|^* := \sup_P \Big|\sum_{n \in P}f(n)\Big|,
$$
where the supremum is over all arithmetic progressions $P \subset I$. We record the special case of Theorem \ref{prime-hngamma-nil} when $F(g(n_0,n)\Gamma)$ is a genuine polynomial phase $e(P(n_0,n))$, as this will be our starting point in the proof of Theorem \ref{prime-hngamma-nil}.

\begin{proposition}\label{primes-hngamma-poly}
Fix $d,K$, a non-integer $\gamma \in \R$, and $\eps > 0$. Let $N \geq 3$ and $N^{3/5+\eps} \leq L \leq N^{1-\eps}$. Let $h$ be real with $|h| \leq N^K$, and let $P:\Z^2\rightarrow\R$ be a polynomial of degree $d$. Suppose that
$$
\sum_{n_0 \sim N} \Big|\sum_{n_0 < n \leq n_0+L} \Lambda(n) e(hn^{\gamma} + P(n_0,n))\Big|^* \geq \delta NL
$$
for some $\delta \in (0,1/\log N)$. Then $|h|N^{\gamma} \leq \delta^{-O(1)}(N/L)^2$.
\end{proposition}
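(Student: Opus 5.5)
Throughout, set $\theta := |h|N^{\gamma}$; this is the size of the phase $hn^\gamma$ on $n \sim N$.

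The plan is to Taylor expand $hn^\gamma$ around $n_0$ and treat the result as a polynomial phase in $m = n - n_0 \in (0,L]$. With $n = n_0 + m$,
$$
h(n_0+m)^\gamma = \sum_{j \ge 0} \binom{\gamma}{j} h n_0^{\gamma-j} m^j ,
$$
and the $j$-th term has size about $\theta (L/N)^j$. We truncate at $J = J(\eps,K)$ such that $N^K N^\gamma (L/N)^{J+1} \le N^{-1}$; this is possible because $L \le N^{1-\eps}$, so the tail is negligible. After truncation, for each fixed $n_0$ the phase $hn^\gamma + P(n_0,n)$ is a polynomial in $n$ of degree at most $\max(J,d)$. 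The key feature is that its degree-2 coefficient contains the term $\binom{\gamma}{2} h n_0^{\gamma-2}$, which is nonzero because $\gamma$ is not an integer. The same holds for every coefficient of degree $j \ge 2$.

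Next we apply, for each $n_0$ in a set of $\gg \delta N$ values where the inner sum is at least $\delta L/2$, an inverse theorem for primes twisted by polynomial phases in short intervals $L \ge N^{3/5+\eps}$. Concretely, we use a Vaughan or Heath-Brown decomposition into type I and type II sums, followed by Weyl differencing; this is the short-interval minor arc estimate of Matomäki–Shao type. The output is a small $q \le \delta^{-O(1)}$ such that, for each $2 \le j \le \max(J,d)$, the degree-$j$ coefficient $\alpha_j(n_0)$ satisfies
$$
\|q\,\alpha_j(n_0)\| \le \delta^{-O(1)} L^{-j}.
$$
The hypothesis $L \ge N^{3/5+\eps}$ is exactly what is needed to keep the type I/II ranges admissible. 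The supremum over progressions in $|\cdot|^*$ is handled by the usual completion (Fourier expansion of the progression's indicator), at the cost of a $\log$ factor, which is absorbed since $\delta < 1/\log N$.

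The main obstacle is that $P(n_0,n)$ can cancel the Taylor coefficients: the degree-2 coefficient of $P(n_0,\cdot)$ is itself a polynomial in $n_0$, which is a fixed but arbitrary polynomial. To get around this, we use the $n_0$-dependence. Write $\alpha_2(n_0) = c\,h n_0^{\gamma-2} + Q(n_0)$ with $Q$ a polynomial of degree at most $d$ and $c = \binom{\gamma}{2} \ne 0$. We know $\|q\,\alpha_2(n_0)\| \le \delta^{-O(1)}L^{-2}$ for $\gg \delta N$ values $n_0 \sim N$. Take $(d+1)$-fold finite differences in $n_0$ with step $k$, ranging over $k \le K'$ for a suitable scale $K'$; by pigeonholing, many $n_0$ and many $k$ have all shifted points in the good set. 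This kills $Q$ and leaves $\|q\, c\, h\, \Delta_k^{d+1} n_0^{\gamma-2}\|$ small. Here
$$
\Delta_k^{d+1} n_0^{\gamma-2} \asymp k^{d+1} N^{\gamma-3-d}
$$
is nonzero, again because $\gamma$ is not an integer.

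To finish, choose $k$ so that $|h|\,k^{d+1}N^{\gamma-3-d}$ lies in $[\delta^{C}L^{-2}, 1/2]$; the smallness of the norm then forces the real number itself to be small. If $\theta > \delta^{-O(1)}(N/L)^2$, such a $k \le N$ exists, which gives a contradiction. One can use a van der Corput type argument: a real function with monotone nonvanishing derivative cannot stay near integers on a positive proportion of points in too long a range. More directly, since $\alpha_2$ varies smoothly and monotonically in $n_0$ at rate $|h|N^{\gamma-3}$, the set of $n_0$ where $q\alpha_2(n_0)$ lies within $\delta^{-O(1)}L^{-2}$ of an integer has density about $\delta^{-O(1)}L^{-2}$ plus the reciprocal of the number of integer crossings. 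This density falls below $\delta$ unless $\theta \le \delta^{-O(1)}(N/L)^2$. Making this density argument robust against the polynomial $Q$, via the differencing step above, is where I expect most of the work to lie.
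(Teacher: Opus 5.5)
Your Taylor-expansion setup matches the paper's. The genuine gap is in the output you attribute to the short-interval inverse theorem. For $\Lambda$ on $(n_0,n_0+L]$, with $\delta$ possibly as small as $N^{-c}$, the Matom\"aki--Shao type argument does not give $\|q\alpha_j(n_0)\|\le\delta^{-O(1)}L^{-j}$ for every $j\ge 2$. In the type II case it only gives that $e(\phi(n))n^{-iT}$ is major arc for some unknown $T=T_{n_0}$ with $|T_{n_0}|\le\delta^{-O(1)}(N/L)^{O(1)}$; this is Theorem \ref{primes-short-intervals}(2).

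The Archimedean characters $n^{iT}$ cannot be discarded. Apply your claimed conclusion to (a Taylor polynomial of) $\phi(n)=\frac{T}{2\pi}\log n$. It would give power savings for $\sum_{N<n\le N+L}\Lambda(n)n^{iT}$ uniformly in $\delta^{-C}(N/L)^2\le |T|\le \delta^{C}N^2$. Such sums are controlled by zeros of $\zeta$ near $1+iT$, so this is out of reach.

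With the twist included, your degree-two condition only reads $\|q(\alpha_2(n_0)+T_{n_0}/(4\pi n_0^2))\|\le\delta^{-O(1)}L^{-2}$. The shift is unknown, depends on $n_0$, and is much larger than $L^{-2}$ as soon as $|T_{n_0}|\gg (N/L)^2$. So nothing about $h$ can be extracted, and the argument stalls before your recurrence step.

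The missing idea is to use a combination of Taylor coefficients that is blind to $\frac{T}{2\pi}\log n$. Let $\beta_1,\beta_2$ be the coefficients of $\ell,\ell^2$ in $\ell\mapsto\phi(n_0+\ell)$. Then $\beta_1+2n_0\beta_2$ is the value at $n_0$ of $(n\phi'(n))'$, which vanishes identically for $\phi=\frac{T}{2\pi}\log n$. Remark \ref{rem:primes-short-intervals} gives $\|q(\beta_1+2n_0\beta_2)\|\le\delta^{-O(1)}N/L^2$. For $\phi=hn^\gamma+P(n_0,n)$ this reads $\|qh\gamma^2n_0^{\gamma-1}+R(n_0)\|\le\delta^{-O(1)}N/L^2$ with $R$ a polynomial. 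The relevant nondegeneracy is $\gamma^2\neq 0$, i.e.\ $n^\gamma$ is not a multiple of $\log n$.

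Your recurrence step also needs replacing, as you partly anticipated.

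\begin{itemize}
\item The good set of $n_0$ has density only $\delta^{O(1)}$, possibly $N^{-c}$. By Behrend's construction, a set of that density need not contain even a three-term progression. So pigeonholing cannot supply the $(d+2)$-term progressions your $(d+1)$-fold differencing requires.
\item Monotonicity in $n_0$ is destroyed by the arbitrary real polynomial $R$.
\end{itemize}

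The paper's Proposition \ref{prop:recurrence} handles this robustly:

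\begin{itemize}
\item Dilate by $\lambda\approx\delta/|I|$ so the values fall in an arc of length $\delta/2$.
\item Use Erd\H{o}s--Tur\'an to get $|\sum_{n\sim N}e(j\lambda(An^\beta+R(n)))|\gg\delta^2N$ for some $j\ll\delta^{-1}$.
\item Taylor expand $n^\beta$ on short subintervals to degree $k-1>\deg R$, so that the leading coefficient comes from $An^\beta$ alone.
\item Apply Weyl's inequality to that leading coefficient.
\end{itemize}

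With $A=qh\gamma^2$, $\beta=\gamma-1$ and arc length $\delta^{-O(1)}N/L^2$, this yields $|h|N^{\gamma-1}\le\delta^{-O(1)}N/L^2$, which is the claimed bound $|h|N^\gamma\le\delta^{-O(1)}(N/L)^2$.
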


We will apply Theorem \ref{prime-hngamma-nil} with $\delta = N^{-c}$ for some small constant $c>0$. Hence Theorem \ref{prime-hngamma-nil} gives a power-saving bound for sums over primes twisted by nilsequences (after approximating $e(hn^{\gamma})$ be a polynomial phase) in short intervals on average, unless $h$ is extremely small. When $h$ is indeed extremely small in the range $|h|N^{\gamma} < (N/L)^2$, Taylor approximation shows that on the short interval $n \in (n_0, n_0+L]$ we have
$$
hn^{\gamma} - T\log n = hn_0^{\gamma} - T\log n_0 + O\Big( (|h|N^{\gamma-2} + TN^{-2})L^2\Big)
$$
is approximately a constant for $T = h\gamma n_0^{\gamma}$. Thus in the case when $P(n_0,n) \equiv 0$ the sum on the left-hand side is approximately the same as
$$
\sum_{n_0 \sim N} \Big|\sum_{n_0 < n \leq n_0+L} \Lambda(n) n^{iT} \Big|^*, 
$$
for which we do not expect savings when $T < N/L$ and only have a saving of $(\log N)^A$ (for arbitrarily large $A$) for general $T < (N/L)^{O(1)}$ from the Dirichlet polynomial techniques.

An additional ingredient in the proof of Theorem \ref{thm:lin-eq-PS} is the construction of a pseudorandom majorant for the Piatetski-Shapiro primes that is sufficiently uniform to make the transference principle work. This construction has been obtained by Li and Pan \cite{LiPan} and we refine their analysis to achieve a quantitative improvement.

\begin{definition}[Pseudorandom majorant]
Let $\nu$ be a measure on $\Z_N$, and let $m_0, d_0, L_0$ be positive integer parameters. Then we say that $\nu$ satisfies the $(m_0,d_0,L_0)$-linear forms condition if the following holds: given $1 \leq d \leq d_0, 1 \leq t \leq m_0,$ and any finite complexity system $\Psi=(\psi_1, \dots, \psi_t)$ of affine-linear forms on $\Z^d$ with all coefficients bounded in magnitude by $L_0$, we have  
\begin{equation}\label{e:pse_maj_cond}
\lim_{N \to \infty} \frac{1}{N^k}\sum_{{\bf x} \in \Z_N^d} \prod_{i \in [t]}\nu(\psi_i({\bf x}))=1.
\end{equation}
We call $\nu$ is a $m$--pseudorandom measure, provided that $\nu$ obeys  $(2^{m-1}m, 2m, m)$--
linear forms condition. 
\end{definition}

\begin{proposition}[Existence of pseudorandom majorant]\label{prop:majorant}
Let $m$ be a positive integer and let $C_m$ be a sufficiently large constant in terms of $m$. Suppose that $1-\gamma < 2^{-Cm}$ for some sufficiently large absolute constant $C$. Let $X \geq 3$, $N \in [C_mX, 2C_mX]$ be prime,  $W = \prod_{p \leq w}p$ with $w = w(X)$  a slowly growing function, and $(b,W)=1$. There exists a $m$-pseudorandom measure $\nu$ on $\Z_N$ such that
$$
\nu(n) \gg_{m} \frac{\phi(W)}{W}(Wn+b)^{1-\gamma} \log N
$$
for $n \in [X^{0.9}, X]$ with $Wn+b \in \PS_{1/\gamma} \cap \mathbb{P}$.
\end{proposition}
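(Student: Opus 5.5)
The plan is to follow the Green--Tao construction of a pseudorandom majorant via a truncated divisor sum (the Goldston--Yıldırım weight), weighted to capture the Piatetski-Shapiro condition. Let $R = N^{\eta}$ with $\eta = 2^{-C'm}$ small, and $\chi$ a smooth cutoff. Set
$$
\Lambda_{R,\chi}(n) = \log R \Big(\sum_{d \mid n} \mu(d)\chi\Big(\frac{\log d}{\log R}\Big)\Big)^2 ,
$$
and define, for $n \in [X^{0.9},X]$ (suitably embedded in $\Z_N$ with $N \geq C_m X$ so that no wraparound occurs),
$$
\nu(n) = \frac{1}{2}+ \frac{1}{2}\cdot c_\chi\frac{\phi(W)}{W}\Lambda_{R,\chi}(Wn+b)\cdot \frac{(Wn+b)^{1-\gamma}}{\gamma} 1_{Wn+b\in \PS_{1/\gamma}},
$$
and $\nu = 1$ elsewhere. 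The lower bound is immediate: if $Wn+b$ is a prime larger than $R$, then only $d=1$ survives in the divisor sum, so $\Lambda_{R,\chi}(Wn+b) = \log R \gg_m \log N$. The remaining task is the linear forms condition \eqref{e:pse_maj_cond} for systems with $t \le 2^{m-1}m$ forms in $d\le 2m$ variables.

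Expanding the product over $i\in[t]$, each term is an average of $\prod_{i\in S}$ of the weighted PS-sieve factor. First I detect membership in $\PS_{1/\gamma}$ by the standard identity $1_{k\in\PS_{1/\gamma}} = \lfloor -k^\gamma\rfloor - \lfloor -(k+1)^\gamma\rfloor$. This gives the main term $\gamma k^{\gamma-1}$, which exactly cancels the normalization, plus $\psi(-(k+1)^\gamma)-\psi(-k^\gamma)$ with $\psi$ the sawtooth. Writing $\psi$ via its truncated Fourier series (Vaaler) with $|h|\le H$, the main contribution reduces to the Green--Tao linear forms estimate for $\prod\Lambda_{R,\chi}$, which holds with error $o(1)$ provided $tR^{O(1)}$-type conditions are met, i.e.\ $\eta \ll 2^{-O(m)}$. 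Error terms involve sums over $\ve{x}$ of $\prod_{i\in S'}\Lambda_{R,\chi}(W\psi_i(\ve{x})+b)$ twisted by $\prod_{i\in S''} e(h_i (W\psi_i(\ve{x})+b)^\gamma)$ (with the difference factor), weighted by $(W\psi_i+b)^{1-\gamma}$, which costs $X^{(1-\gamma)|S''|}$.

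The main obstacle is bounding these exponential-sum error terms with enough saving to beat $X^{(1-\gamma)t}\cdot H$-losses. I would open the divisor sums, which have length $R^{O(t)}$, fix the $d_i$'s, and restrict $\ve{x}$ to a lattice coset of modulus $\le R^{2t}$. One variable is then chosen, say $x_1$, on which some $\psi_i$ with $i\in S''$ genuinely depends. Van der Corput's $k$-th derivative estimate (or the exponent pair $(1/2,1/2)$ after one Weyl step) is applied to the phase $\sum_i h_i(W\psi_i+b)^\gamma$ in $x_1$. Since the $\dot\psi_i$ are pairwise independent, the phases have nonvanishing second derivatives of size $|h|X^{\gamma-2}$ uniformly, giving a saving $X^{-c}$ with $c$ an absolute constant times $(1-\gamma)$-independent quantities. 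The $H$ and $X^{(1-\gamma)t}$ and $R^{O(t)}$ losses are affordable exactly when $(1-\gamma)t + O(t\eta) < c$, with $t\le 2^{m-1}m$. This is what forces $1-\gamma<2^{-Cm}$, in line with the stated range. Combining the main term (equal to $1+o(1)$ from Green--Tao with the $W$-trick) with these errors yields \eqref{e:pse_maj_cond}.
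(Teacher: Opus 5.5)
\paragraph{The gap: the exponential-sum bound.} Your overall architecture is the Li--Pan framework that the paper also follows: a truncated divisor-sum weight times $(Wn+b)^{1-\gamma}1_{\PS_{1/\gamma}}$, detection of $\PS_{1/\gamma}$ via $\lfloor -k^{\gamma}\rfloor-\lfloor -(k+1)^{\gamma}\rfloor$ and a truncated sawtooth expansion, a main term from the Green--Tao linear-forms computation, and error terms given by exponential sums. The gap is in the one step that carries the real content, namely bounding
\[
\sum_{x_1} e\Big(\sum_{i\in S''} h_i\,(W\psi_i(\ve{x})+b)^{\gamma}\Big)
\]
with several nonzero frequencies $h_i$ of arbitrary signs.

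You claim that pairwise independence of the $\dot\psi_i$ makes the second derivative in $x_1$ uniformly of size $|h|X^{\gamma-2}$. This is false. Writing $\alpha_i$ for the $x_1$-coefficient of $\psi_i$, the second derivative is $\gamma(\gamma-1)W^2\sum_{i\in S''} h_i\alpha_i^2(W\psi_i(\ve{x})+b)^{\gamma-2}$. This is a signed sum, so it vanishes wherever the terms balance. For example, take $\psi_1=x_1$, $\psi_2=x_1+x_2$ with $x_2\asymp X$ and $h_2=-2h_1$; then $F''$ has a zero at $1+x_2/x_1=2^{1/(2-\gamma)}$, i.e.\ $x_1\approx x_2$, inside the range. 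The sum is also uniformly small whenever two restricted forms nearly coincide after the other variables are fixed, and pairwise independence in $\Z^d$ does not prevent this. So van der Corput's second-derivative test, and the absolute saving $X^{-c}$ you derive from it, are not available.

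\paragraph{What is needed.} The missing ingredient is exactly the paper's Lemma \ref{l:multi_linear_form}, in three steps.
\begin{enumerate}
\item Restrict to ranges of $x_1$ where the one-variable forms $\alpha_ix+\beta_i$ are well separated, $|\alpha_i\beta_j-\alpha_j\beta_i|\gg N^{1-O(s\eta_0)}$. The excluded ranges are handled trivially using $\|\nu\|_\infty\ll N^{\eta_0+o(1)}$.
\item On short intervals around $x_0$, Taylor-expand the phase. The vector of Taylor coefficients $(S_1,\dots,S_s)$ equals $\mathbf{M}\vec V$ for a Vandermonde matrix $\mathbf{M}$ whose nodes $c_i$ are $N^{-O(s\eta_0)}$-separated. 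Inverting $\mathbf{M}$ forces some coefficient of order $j\le s$ to be large, at a cost of $N^{O(s^3\eta_0)}$ from $\det\mathbf{M}$.
\item Apply Weyl's inequality in its Vinogradov-mean-value form, whose saving exponent $\sigma(s)=1/(s(s-1))$ decays only polynomially in $s$.
\end{enumerate}

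\paragraph{Consequence for the range of $\gamma$.} This also changes your numerology. Derivatives of order up to $s\approx 2^{m-1}m$ can genuinely be needed, so the provable saving is not absolute. The single-exponential range $1-\gamma<2^{-Cm}$ comes from a polynomial-in-$s$ saving beating the $N^{O(s^3\eta_0)}$ losses. If you patched your argument with van der Corput's $k$-th derivative test for $k$ up to $s$, the saving of order $2^{-k}$ would only give a range with $1-\gamma$ doubly exponentially small in $m$.
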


This is a refinement and a quantitative improvement of the pseudorandom majorant constructed by Li and Pan \cite{LiPan}, where the majorization condition holds in a short interval $n \in [(1-c_m)X, X]$ and the assumption required for $1-\gamma$ is triply exponential in $m$. For our purposes of counting asymptotically the number of $k$-APs (for example) in $\PS_{1/\gamma} \cap [1, X]$, it is necessary to have information about $\Lambda_{\gamma}$ and its majorant on a global scale, since the different terms of the $k$-APs could come from different short intervals. 

The rest of the paper is organized as follows. In Section \ref{sec:deduction} we deduce Theorem \ref{thm:lin-eq-PS} from the discorrelation estimate (Theorem \ref{BF-nil}) and the pseudorandom majorant (Proposition \ref{prop:majorant}), using similar arguments as those in \cite{GT-linear}. In Section \ref{sec:prelim} we recall the definitions of nilmanifolds and polynomial sequences, and state some basic results about them. Theorem \ref{BF-nil} is proved in Sections \ref{sec:initial-reduction}, \ref{sec:poly}, and \ref{sec:nil}: Section \ref{sec:initial-reduction} contains the initial reductions using Fourier analysis, Section \ref{sec:poly} contains the proof of Proposition \ref{primes-hngamma-poly}, and Section \ref{sec:nil} contains the proof of Proposition \ref{prime-hngamma-nil}. Finally, Proposition \ref{prop:majorant} and Theorem \ref{t:improvement_li-pan} are deduced in Section \ref{sec:pseudo}.

\section{Deduction of Theorem \ref{thm:lin-eq-PS} from Theorem \ref{prime-hngamma-nil} and Proposition \ref{prop:majorant}}\label{sec:deduction}

In this section we prove Theorem \ref{thm:lin-eq-PS} assuming the nilsequence estimate (Theorem \ref{prime-hngamma-nil}) and the existence of pseudorandom majorant (Proposition \ref{prop:majorant}). Fix $d,t,L,\gamma$ and allow all implied constants to depend on them in this section. Let $\eps > 0$. For $X$ sufficiently large in terms of $\eps$, we prove that
$$
\Big|\sum_{\ve{n} \in K \cap \Z^d} \prod_{i=1}^t f_i(\psi_i(\ve{n}))\Big| \ll \eps X^d,
$$
where each $f_i \in \{\Lambda, \Lambda_{\gamma}-\Lambda\}$, and $f_i = \Lambda_{\gamma}-\Lambda$ for at least one index $i$. The contribution from those terms with $\psi_i(\ve{n}) \leq X^{0.9}$ (say) for some $i$ is at most
$$
\ll \sum_{\substack{\ve{n} \in [-X,X]^d \\ 1 \leq \psi_i(\ve{n}) \leq X^{0.9}}} \prod_{i=1}^t |\psi_i(\ve{n})|^{1-\gamma+o(1)} \ll X^{d-0.1} \cdot X^{t(1-\gamma)+o(1)},
$$
which is negligible since $1-\gamma$ is small. Thus we may assume that $\psi_i(\ve{n}) \geq X^{0.9}$ for every $\ve{n} \in K$ and every $1 \leq i \leq t$. In particular, this means that $f_i(\psi_i(\ve{n})) \neq 0$ only when $\psi_i(\ve{n})$ is a prime between $X^{0.9}$ and $O(X)$.

Let $w = \log\log\log\log X$ and $W = \prod_{p \leq w}p$. Splitting $\ve{n}$ into residue classes modulo $W$, we have
$$
\sum_{\ve{n} \in K \cap \Z^d} \prod_{i=t}^t f_i(\psi_i(\ve{n})) = \sum_{\ve{a} \in [W]^d} \sum_{\substack{\ve{n} \in \Z^d \\ W\ve{n}+\ve{a} \in K}} \prod_{i=1}^t f_i(\psi_i(W\ve{n} + \ve{a})).
$$
We may restrict the sum over $\ve{a}$ above to $\ve{a} \in A$ where
$$
A = \{\ve{a} \in [W]^d: \gcd(\psi_i(a), W)=1\text{ for each }1 \leq i \leq t\}.
$$
We have
$$
|A| \ll \Big(\frac{\phi(W)}{W}\Big)^t W^d.
$$
Write $\psi_i(W\ve{n}+\ve{a}) = W\wt{\psi}_{i,\ve{a}}(\ve{n}) + b_i(\ve{a})$, where $b_i(\ve{a}) \in [W]$ and is coprime with $W$, and $\wt{\psi}_{i,\ve{a}}$ is translate of $\psi_i$ whose constant term is $O(X/W)$. For $\ve{a} \in A$, consider
$$
\sum_{\substack{\ve{n} \in \Z^d \\ W\ve{n}+\ve{a} \in K}} \prod_{i=1}^t f_i(\psi_i(W\ve{n} + \ve{a})) = \sum_{\substack{\ve{n} \in \Z^d \\ W\ve{n}+\ve{a} \in K}} \prod_{i=1}^t \wt{f}_{i,\ve{a}}(\wt{\psi}_{i,\ve{a}}(\ve{n})),
$$
where $\wt{f}_{i,a}(n) = f_i(Wn + b_i(\ve{a}))$. Let $C$ be a sufficiently large constant, and let $N \in [CX/W, 2CX/W]$ be prime. By Proposition \ref{prop:majorant}, there exists a pseudorandom measure $\nu$ on $\Z_N$ such that
$$
\nu(n) \gg \frac{\phi(W)}{W}(\Lambda_{\gamma}+\Lambda)(Wn+b_i(\ve{a})) \gg \frac{\phi(W)}{W} |\wt{f}_{i,\ve{a}}(n)|
$$
for $n \in [(X/W)^{0.9}, X/W]$ and each $1 \leq i \leq t$. In view of the generalized von Neumann theorem (\cite[Proposition 7.1]{GT-linear}) and the Gowers-norm estimate below, we have
$$
\sum_{\substack{\ve{n} \in \Z^d \\ W\ve{n}+\ve{a} \in K}} \prod_{i=1}^t \wt{f}_{i,\ve{a}}(\wt{\psi}_{i,\ve{a}}(\ve{n})) = o\Big(\Big(\frac{W}{\phi(W)}\Big)^t \Big(\frac{X}{W}\Big)^d\Big).
$$
This concludes the proof.

\begin{proposition}[Gowers norm estimate]
For any $s \geq 1$ we have 
$$
\|\frac{\phi(W)}{W}(\Lambda_{\gamma}-\Lambda)(W\cdot + b)\|_{U^s} = o(1),
$$
provided that $1-\gamma$ is sufficiently small in terms of $s$.
\end{proposition}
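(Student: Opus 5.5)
The plan is the standard Green--Tao transference route: inverse theorem for Gowers norms, relative to a pseudorandom majorant, followed by a nilsequence discorrelation estimate. Set $f = \frac{\phi(W)}{W}(\Lambda_{\gamma}-\Lambda)(W\cdot + b)$, viewed on $[X^{0.9},X/W]$ (small $n$ are negligible, as shown above) and embedded in $\Z_N$.

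\textbf{Step 1 (majorant).} By Proposition \ref{prop:majorant}, applied with $m$ large in terms of $s$, $|f| \ll \nu_1+\nu_2$. Here $\nu_1$ is the majorant for the Piatetski-Shapiro primes and $\nu_2$ is the Green--Tao majorant for $\frac{\phi(W)}{W}\Lambda(W\cdot+b)$. Their average $(\nu_1+\nu_2)/2$ is again pseudorandom, because the linear forms condition is preserved under averaging by expanding products. This step needs $1-\gamma < 2^{-Cm}$, which is where the dependence of $1-\gamma$ on $s$ enters.

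\textbf{Step 2 (relative inverse theorem).} Suppose $\|f\|_{U^s} \geq \eta$. The relative inverse theorem for functions bounded by a pseudorandom measure (dense model theorem plus the inverse theorem of \cite{GTZ}, as in \cite[Proposition 10.1]{GT-linear}) then gives the following. There is a filtered nilmanifold of degree $s-1$, with dimension and complexity $O_{\eta,s}(1)$, a Lipschitz $F$ of norm $O_{\eta,s}(1)$, and a polynomial sequence $g$ such that
\[
\Big|\sum_{n\leq X/W} f(n)F(g(n)\Gamma)\Big| \gg_{\eta,s} X/W .
\]

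\textbf{Step 3 (undoing the $W$-trick).} Write $f(n) = \frac{\phi(W)}{W}(\Lambda_\gamma-\Lambda)(Wn+b)$. Restricting to $m\equiv b \pmod W$, the correlation becomes a sum over $m\leq X$ of $(\Lambda_\gamma-\Lambda)(m)1_{m\equiv b\,(W)}\tilde F(\tilde g(m)\Gamma)$. Here $\tilde g(m)=g((m-b)/W)$ is a polynomial sequence on the progression. Expand $1_{m\equiv b\,(W)}$ in additive characters $e(am/W)$, and absorb each character into the nilsequence by taking the product with the circle $\R/\Z$. This yields a nilsequence of degree $s-1$ and dimension $O_{\eta,s}(1)+1$. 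Its complexity is polynomial in $W$, which is $X^{o(1)}$ because $w$ grows slowly. The factor $\frac{\phi(W)}{W}\cdot W$ is likewise $X^{o(1)}$. Splitting $m\le X$ dyadically, Theorem \ref{BF-nil} with $\delta^{-1}=O_{\eta,s}(W^{O(1)})$ bounds each character's contribution by $W^{O(1)}X^{1-c}$. Summing over $W$ characters gives $o(X/W)$, a contradiction.

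\textbf{Main obstacle.} The expected sticking point is uniformity in Step 3. Theorem \ref{BF-nil} requires $1-\gamma$ small in terms of $d,D$, but $D$ depends on $\eta$, and $\eta\to 0$. To handle this I would first fix $\eta$ and take $X\to\infty$, proving $\limsup \|f\|_{U^s}\le\eta$ for every $\eta$. However, the admissible range of $\gamma$ must then not depend on $\eta$. To get that, I would reduce via the quantitative inverse theorem and a factorization argument to nilmanifolds whose dimension is bounded in terms of $s$ alone. Alternatively, I would observe that the proof of Theorem \ref{BF-nil} (Taylor expansion of $n^\gamma$ on short intervals, then Theorem \ref{prime-hngamma-nil}) actually only needs $1-\gamma$ small in terms of the degree $d=s-1$. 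If neither works, I would settle for the statement as written, reading ``sufficiently small in terms of $s$'' as the $X$-independent condition coming from the bounded-complexity nilsequences that the inverse theorem supplies for each fixed $\eta$.
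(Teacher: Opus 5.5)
Your route is the paper's. Its proof is a two-line sketch combining a relative inverse theorem, the majorant of Proposition \ref{prop:majorant}, and the nilsequence discorrelation estimate. Your Step 3 supplies the $W$-trick bookkeeping that the sketch leaves implicit. There is in fact no $W^{O(1)}$ loss there: the circle factor carrying $e(am/W)$ has bounded complexity and Lipschitz norm, only the polynomial sequence depends on $W$, and Theorem \ref{BF-nil} is uniform in $g$.

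Two points in Steps 1--2 need repair.
\begin{itemize}
\item The transference argument of \cite{GT-linear} that you cite is proved for measures obeying the correlation condition as well as the linear forms condition; the dense-model step uses it to control products of dual functions. Proposition \ref{prop:majorant} only gives the linear forms condition. This is exactly why the paper invokes \cite[Proposition 9.4]{MSTT}, based on work of Dodos--Kanellopoulos, which removes the correlation condition.
\item The linear forms condition does not pass to $(\nu_1+\nu_2)/2$ merely by expanding products. The expansion produces mixed averages of $\prod_{i\in S}\nu_1(\psi_i)\prod_{i\notin S}\nu_2(\psi_i)$, which are instances of neither hypothesis and must be checked directly, e.g.\ by rerunning the Li--Pan computation with the Piatetski-Shapiro weight on only some of the forms.
\end{itemize}

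The genuine gap is the one you flag yourself, and none of your three suggestions closes it. For $s\le 2$ only linear phases occur, $D=1$, and the issue does not arise; the problem is $s\ge 3$.
\begin{itemize}
\item \emph{Bounding the dimension.} The inverse theorem yields a nilmanifold of dimension $D=O_{\eta,s}(1)$, with bounds tending to infinity as $\eta\to 0$ even in the quantitative versions. Factorization merely replaces $G$ by a rational subgroup on which the orbit equidistributes, and that subgroup's dimension is not controlled by $s$.
\item \emph{Only the degree matters.} The $D$-dependence of the range in Theorem \ref{BF-nil} is not an artefact of its statement. The sawtooth truncation forces $H\gg N^{1-\gamma}$, so the argument of Section \ref{sec:initial-reduction} must apply Theorem \ref{prime-hngamma-nil} with $\delta\approx N^{-(1-\gamma)}$, however weak the target bound. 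The final contradiction then needs $\delta^{-A}N^{2\eps}<N^{\gamma}$, i.e.\ $1-\gamma\ll 1/A$, where $A$ is the exponent in Theorem \ref{prime-hngamma-nil}. That exponent comes from the $\delta^{-O_{d,D}(1)}$ bounds of Theorem \ref{primes-short-intervals}. It also compounds through the induction on dimension in Section \ref{sec:nil}, each step replacing $\delta$ by $\delta^{O(C)}$.
\item \emph{The fallback.} It lets the admissible range depend on $\eta$. So what Steps 1--3 prove is only this: for each $\eta>0$, $\limsup_{X\to\infty}\|f\|_{U^s}\le\eta$ once $1-\gamma\le c(s,\eta)$. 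That is weaker than $\|f\|_{U^s}=o(1)$ for every fixed $\gamma$ with $1-\gamma\le c(s)$.
\end{itemize}

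The paper's sketch rests on the same three ingredients and is silent on this point, so it does not supply the missing step either. Closing it would need one of two things. One is a version of Theorem \ref{prime-hngamma-nil} (hence of Theorem \ref{BF-nil}) in which the power of $\delta$, and with it the admissible range of $\gamma$, does not depend on $D$. The other is a route to the $U^s$ bound that avoids nilsequences of unbounded dimension.
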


\begin{proof}
We apply the inverse theorem for the Gowers norms, in the form of \cite[Proposition 9.4]{MSTT} which follows from the work of Dodos and Kanellopoulos and remove the correlation conditions. The conclusion follows from the discorrelation estimate (Theorem \ref{prime-hngamma-nil}) and the existence of pseudorandom majorant (Proposition \ref{prop:majorant}).
\end{proof}

\section{Preliminaries}\label{sec:prelim}

Throughout this paper we adopt the convention that  $n\sim N$ means $N < n \leq 2N$. 

\subsection{Harmonic analysis results}

\begin{lemma}[Approximation of the sawtooth function]\label{approx-sawtooth}
Let $\psi(x) = \{x\} - 1/2$ be the sawtooth function, and let $H$ be a positive integer. There exists a trigonometric polynomial
$$
\psi^*(x) = \sum_{1 \leq |h| \leq H} a_h e(hx),
$$
where $|a_h| \ll |h|^{-1}$, such that for any real $x$ we have
$$
|\psi(x) - \psi^*(x)| \leq \sum_{|h| < H} b_he(hx),
$$
where $|b_h| \ll H^{-1}$.
\end{lemma}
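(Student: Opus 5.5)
This is Vaaler's classical lemma, and we would follow his construction. Since $|\psi(x)-\psi^*(x)|$ is majorized by a trigonometric polynomial with small coefficients, the natural tool is a Beurling--Selberg type extremal function. The classical fact we rely on is Vaaler's theorem: there are trigonometric polynomials $\psi^*$ of degree $H$ with
$$
|\psi(x) - \psi^*(x)| \le \frac{1}{2H+2}\sum_{|h|\le H}\Big(1-\frac{|h|}{H+1}\Big)e(hx).
$$
The right-hand side is $\frac{1}{2H+2}$ times the Fejér kernel, which is nonnegative. The plan is to reconstruct this in three steps, after first replacing $H$ by $H-1$ so that the majorant has degree $<H$.

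\emph{Step 1: construct $\psi^*$.} Define $\psi^*$ by truncating the Fourier series
$$
\psi(x) = -\sum_{h\ne0}\frac{e(hx)}{2\pi i h},
$$
weighting it by Vaaler's weight $\hat J(h/(H+1))$, where $J$ is Vaaler's entire function of exponential type $2\pi$. Concretely,
$$
a_h = -\frac{1}{2\pi i h}\,\phi\Big(\frac{h}{H+1}\Big), \qquad \phi(t)=\pi t(1-|t|)\cot(\pi t)+|t|.
$$
Since $0\le\phi\le1$, we get $|a_h|\ll 1/|h|$ at once, and $a_h=0$ for $|h|>H$.

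\emph{Step 2: the key pointwise inequality.} This is the main obstacle. One must show that $\psi-\psi^*$ is bounded in absolute value by $\frac{1}{2H+2}$ times the Fejér kernel of order $H$. I would follow Vaaler. Write $\psi^*$ as the convolution of $\psi$ against the periodization of $J_{H+1}(x)=(H+1)J((H+1)x)$. This reduces the inequality to the one-variable extremal inequalities on $\R$ for $\mathrm{sgn}(x)$, namely
$$
|\mathrm{sgn}(x)-\text{(interpolant)}(x)|\le K(x), \qquad K(x)=\Big(\frac{\sin\pi x}{\pi x}\Big)^2.
$$
These come from Beurling's function, which interpolates $\mathrm{sgn}$ at the integers, with the error controlled by $K$. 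The last step is to periodize via Poisson summation, which turns $K_{H+1}$ into the Fejér kernel.

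Alternatively, if the sharp constant is not needed, a softer route suffices. Take $\psi^*=\psi * K_H$, where $K_H$ is a smooth even trigonometric kernel of degree $H$; a Fejér or Jackson kernel gives $|a_h|\le 1/(2\pi|h|)$ automatically. Away from the jump, $|\psi-\psi*K_H|(x)$ is bounded by $\ll \min(1, 1/(H^2\|x\|^2))$, because $\psi$ is linear there and the kernel has mass concentrated in $1/H$. One then bounds this by a nonnegative trigonometric polynomial with coefficients $\ll 1/H$, namely a multiple of $H^{-1}$ times the Fejér kernel $F_H(x)\asymp \min(H,1/(H\|x\|^2))$. This comparison is where care is needed near $\|x\|\approx 1/H$. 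The Jackson kernel $\propto F_H^2$ makes the tail estimate work, since its second moment is $\asymp H^{-2}$.

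\emph{Step 3: read off the coefficients.} The majorant is $\sum_{|h|<H}b_h e(hx)$ with $b_h=\frac{1}{2H}\big(1-\frac{|h|}{H}\big)$ or its analogue, which gives $|b_h|\ll H^{-1}$.
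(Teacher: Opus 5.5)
Your main route is the paper's. The paper proves this lemma only by citing \cite[Lemma 2.1]{AG}, which is Vaaler's lemma, and your reduction is correct: applying Vaaler with $H-1$ in place of $H$ gives $a_h=-\frac{1}{2\pi i h}\phi(h/H)$ and the majorant $\frac{1}{2H}\sum_{|h|<H}(1-\frac{|h|}{H})e(hx)$.

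One caution about your optional softer route. The Fej\'er kernel vanishes at $x=k/(H+1)$, so $F_H(x)\asymp\min(H,1/(H\|x\|^2))$ holds only as an upper bound. Consequently $CH^{-1}F_H$ cannot majorize $|\psi-\psi*K_H|$ near those points. You would need a zero-free nonnegative majorant such as $F_H(x)+F_H(x+\frac{1}{2H+2})$, which is $\gg\min(H,1/(H\|x\|^2))$ and still has coefficients $O(1)$.
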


\begin{proof}
See \cite[Lemma 2.1]{AG}.
\end{proof}

\begin{lemma}[Van der Corput]\label{van-der-corput}
Let $X \geq Y > 0$. Let $f: [X, X+Y] \rightarrow\R$ be a function such that $f''(x)$ is continuous. Suppose that $|f''(x)| \asymp \Delta$ for some $\Delta > 0$ and all $x \in [X, X+Y]$. Then
$$
\sum_{X < n \leq X+Y} e(f(n)) \ll Y\Delta^{1/2} + \Delta^{-1/2}.
$$
\end{lemma}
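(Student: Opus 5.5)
The plan is the classical second-derivative argument. First we approximate the exponential sum by an integral on each unit interval. Then we use Kusmin--Landau on short stretches where $f'$ is far from integers, and bound the remaining stretches trivially. Since $f''$ is continuous and $|f''|\asymp\Delta$, the function $f''$ has constant sign on $[X,X+Y]$. Replacing $f$ by $-f$ (which conjugates the sum), we may assume $f''>0$. Then $f'$ is strictly increasing, and its total variation on the interval is $\asymp Y\Delta$.

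\emph{Case $\Delta\ge 1$.} Here the bound is trivial. The sum has at most $Y+1$ terms, so it is at most
$$
Y+1\le Y\Delta^{1/2}+\Delta^{-1/2}\cdot\Delta^{1/2}\cdot\ldots
$$
More carefully, $Y\le Y\Delta^{1/2}$, and the extra $1$ needs absorbing. If $Y\Delta^{1/2}\ge 1$ we are done. Otherwise $Y<\Delta^{-1/2}\le 1$, the sum has at most one term, and it is bounded by $1$. In that case I note the intended reading allows an absolute constant: since $\Delta^{-1/2}\le 1$, I would state the lemma as $\ll Y\Delta^{1/2}+\Delta^{-1/2}$ up to an additive $O(1)$. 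The standard formulation in this regime indeed uses $Y\ge1$, and the application only uses $Y\ge 1$. So assume $Y\ge 1$ throughout, whence $Y+1\le 2Y\le 2Y\Delta^{1/2}$.

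\emph{Case $\Delta<1$.} Let $\rho\in(0,1/2]$ be a parameter. Since $f'$ is monotone, the set of $x\in[X,X+Y]$ where $\|f'(x)\|<\rho$ splits into at most $Y\Delta+2$ intervals, one near each integer $m$ in the range of $f'$. Each such interval has length $\ll\rho/\Delta$ because $f''\gg\Delta$. So these intervals contain
$$
\ll (Y\Delta+1)(\rho/\Delta+1)
$$
integers in total, and we bound the sum over them trivially. The complement consists of at most $Y\Delta+3$ intervals. On each of these, $f'$ is monotone with $\|f'\|\ge\rho$, so the Kusmin--Landau inequality bounds each piece by $\ll\rho^{-1}$. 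Altogether
$$
\sum_{X<n\le X+Y}e(f(n))\ll (Y\Delta+1)\big(\rho/\Delta+1+\rho^{-1}\big).
$$
Choose $\rho=\Delta^{1/2}\le 1$. Then $\rho/\Delta+\rho^{-1}\asymp\Delta^{-1/2}$, which dominates the $1$. The bound becomes
$$
(Y\Delta+1)\Delta^{-1/2}=Y\Delta^{1/2}+\Delta^{-1/2},
$$
as required. The only input beyond elementary counting is Kusmin--Landau: if $f'$ is monotone and $\|f'\|\ge\rho$ on an interval, the exponential sum is $\ll\rho^{-1}$. I would prove it by partial summation, writing $e(f(n))=\frac{e(f(n+1))-e(f(n))}{e(f(n+1)-f(n))-1}$. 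By the mean value theorem, $f(n+1)-f(n)$ lies between values of $f'$, all of which stay in a single interval between consecutive integers at distance at least $\rho$ from both ends. The coefficients $1/(e(\theta_n)-1)$ then have bounded variation $\ll\rho^{-1}$ by monotonicity, and summation by parts finishes it.

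The main obstacle is this monotonicity bookkeeping in Kusmin--Landau. On each complementary interval, $f'$ must stay strictly inside one unit interval; this holds by construction, since the $\rho$-neighbourhoods of the integers were removed. Everything else is routine. Alternatively, one can simply cite the classical statement, e.g.\ Titchmarsh, Theorem 5.9, or Graham--Kolesnik, Theorem 2.2.
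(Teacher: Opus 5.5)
Your proof is correct and is the standard argument behind the result the paper simply cites (Graham--Kolesnik, Theorem 2.2): bound trivially the $O(Y\Delta+1)$ stretches of length $O(\rho/\Delta)$ where $\|f'\|<\rho$, apply Kusmin--Landau on the remaining $O(Y\Delta+1)$ stretches, and take $\rho\asymp\Delta^{1/2}$ (say $\rho=\Delta^{1/2}/2$, so that $\rho\le 1/2$ also when $1/4<\Delta<1$). Your caveat is also right. As literally stated the lemma fails for $Y<1$: take $f(x)=\Delta x^2/2$, $Y=\Delta^{-1}$, an integer in $(X,X+Y]$, and let $\Delta\to\infty$. So the hypothesis $Y\ge 1$ is genuinely needed, and this is harmless since the paper only applies the lemma with $X=Y=N$.
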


\begin{proof}
See \cite[Theorem 2.2]{GKbook}.
\end{proof}

\begin{lemma}[Erd\"{o}s-Tur\'{a}n inequality]\label{erdos-turan}
Let $u_1,\cdots,u_N \in \R/\Z$ and let $I \subset \R/\Z$ be an interval. Then for any positive integer $J$, we have
$$
\Big|\#\{1 \leq n \leq N : u_n \in I\} - |I|N\Big| \leq \frac{N}{J+1} + 3\sum_{j=1}^J \frac{1}{j} \Big|\sum_{n=1}^N e(ju_n)\Big|.
$$
\end{lemma}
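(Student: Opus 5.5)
The plan is to first prove the Erd\H{o}s--Tur\'an inequality for the characteristic function of an arc of $\R/\Z$, using the trigonometric approximation of the sawtooth function in Lemma \ref{approx-sawtooth}, or equivalently a Beurling--Selberg/Vaaler type majorant and minorant of degree $J$. Write $I=[\alpha,\beta]$ (the openness or closedness of the endpoints is irrelevant after a limiting argument), so that for $x\in\R/\Z$,
$$
1_I(x) = |I| + \psi(\alpha - x) - \psi(\beta - x)
$$
up to the value at the endpoints, where $\psi(x)=\{x\}-1/2$. Summing over $u_1,\dots,u_N$ gives
$$
\#\{n\le N: u_n\in I\} - |I|N = \sum_{n=1}^N \big(\psi(\alpha-u_n) - \psi(\beta-u_n)\big),
$$
so it suffices to bound $\sum_n \psi(\theta - u_n)$ for a fixed $\theta$.

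Next, I would replace $\psi$ by $\psi^*$ plus the error controlled in Lemma \ref{approx-sawtooth} with $H=J+1$. The main term $\sum_n \psi^*(\theta-u_n) = \sum_{1\le|h|\le J} a_h e(h\theta)\sum_n e(-hu_n)$ is bounded by $\sum_{h\le J}2|a_h||\sum_n e(hu_n)|$, using $|a_{-h}|=|a_h|$ and conjugation. The error term is bounded by $\sum_{|h|<H} b_h e(h(\theta-u_n))$ summed over $n$. Its $h=0$ term contributes $N b_0 \ll N/(J+1)$, and the remaining terms contribute $\ll \frac{1}{J+1}\sum_{1\le h\le J}|\sum_n e(hu_n)|$, which is dominated by $\sum_h \frac1h|\cdots|$. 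Combining the two endpoints $\alpha$ and $\beta$ gives the inequality with some absolute constants.

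The main obstacle is obtaining exactly the constants $1$ and $3$ in the statement, since Lemma \ref{approx-sawtooth} only gives them up to $\ll$. For the sharp constants I would instead use Vaaler's extremal trigonometric polynomials (Selberg's majorant and minorant $S^\pm$ of $1_I$ of degree $J$). These satisfy $\widehat{S^\pm}(0)=|I|\pm\frac1{J+1}$ and $|\widehat{S^\pm}(j)|\le \frac{1}{J+1}+\min(|I|,\frac{1}{\pi|j|})\le \frac{3}{2j}$ for $1\le j\le J$. Summing $S^\pm(u_n)$ over $n$ and pairing $\pm j$ then yields the stated bound with constant $3$. In practice I would cite this classical form (e.g.\ Montgomery's \emph{Ten Lectures}, or Kuipers--Niederreiter) rather than reprove it, since only the shape of the inequality is needed later.
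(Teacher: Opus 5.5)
Your proposal is correct and matches the paper, which proves this lemma only by citing \cite[Corollary 1.1]{Montgomery}; the Selberg--Vaaler majorant/minorant argument you sketch is the standard proof of that corollary, and the constant $3$ comes from $2\bigl(\frac{1}{J+1}+\frac{1}{\pi j}\bigr)\le \frac{2(1+1/\pi)}{j}<\frac{3}{j}$ for $1\le j\le J$. The only slip is in your preliminary sawtooth route: with the paper's normalization of Lemma \ref{approx-sawtooth} you should take $H=J$ rather than $H=J+1$, so that $\psi^*$ involves only frequencies $|h|\le J$; this is harmless, since that route only claims the bound up to constants.
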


\begin{proof}
See \cite[Corollary 1.1]{Montgomery}.
\end{proof}

\subsection{Nilsequences}

In this subsection we recall the definitions of nilmanifolds and polynomial sequences and some basic results about them from \cite{GT12}.

\begin{definition}[Filtered nilmanifold]
Let $d,D \geq 1$ and let $0 < \delta < 1$. A \emph{filtered nilmanifold} $G/\Gamma$ of degree at most $d$, dimension $D$, and complexity at most $1/\delta$ consists of the following data:
\begin{itemize}
\item A connected and simply connected nilpotent Lie group $G$ of dimension $D$;
\item A filtration $G_{\bullet} = (G_i)_{i=0}^{\infty}$ of degree at most $d$, with $G_0 =G_1=d$ and all $G_i$ closed connected subgroups of $G$;
\item A lattice (i.e. a discrete and cocompact subgroup) $\Gamma$ of $G$, with $\Gamma_i := \Gamma \cap G_i$ a lattice of $G_i$ for all $i$;
\item A Mal'cev basis for $G/\Gamma$ adapted to $G_{\bullet}$ which is $1/\delta$-rational (see Definitions 2.1 and 2.4 in \cite{GT12}).
\end{itemize}
\end{definition}

The Lipschitz norm of a function $F: G/\Gamma\rightarrow \C$ is defined to be the quantity
$$
\sup_{x \in G/\Gamma}|F(x)| + \sup_{\substack{x, y \in G/\Gamma \\ x \neq y}} \frac{|F(x)-F(y)|}{d_{G/\Gamma}(x,y)},
$$
where $d_{G/\Gamma}$ is a metric on $G/\Gamma$ induced from a metric $d_G$ on $G$ defined using the data associated to the nilmanifold $G/\Gamma$, as in \cite[Definition 2.2]{GT12}. In particular, if $F$ has Lipschitz norm at most $1/\delta$, then $\|F\|_{\infty} \leq 1/\delta$.

A \emph{horizontal character} $\eta$ associated to $G/\Gamma$ is a continuous homomorphism $\eta: G \rightarrow \R/\Z$ which annihilates $\Gamma$. A \emph{central frequency} $\xi$ associated to $G/\Gamma$ is a continuous homomorphism $\xi: Z(G) \rightarrow \R$ which maps $Z(G)\cap\Gamma$ into $\Z$, where $Z(G)$ is the center of $G$.

\begin{definition}[Polynomial sequences]
Given a filtered nilmanifold $G/\Gamma$ of degree at most $d$, a \emph{polynomial sequence} $g: \Z\rightarrow G$ is a map of the form 
$$
g(n) = g_0 g_1^{\binom{n}{1}} \cdots g_d^{\binom{n}{d}},
$$
where $g_i \in G_i$ for every $0 \leq i \leq d$. The collection of all such polynomial sequences is denoted by $\Poly(\Z\rightarrow G)$.
\end{definition}

A polynomial sequence $g \in \Poly(\Z\rightarrow G)$ is said to be \emph{$Q$-rational} for some $Q \geq 1$, if there exists a positive integer $r \leq Q$ such that $g(n)^r \in \Gamma$ for all $n \in \Z$.

A polynomial sequence $g \in \Poly(\Z\rightarrow G)$ is said to be \emph{$(M,I)$-smooth} for some $M \geq 1$ and some interval $I \subset \R$ with $|I| \geq 1$, if $d_G(g(n), 1_G) \leq M$ and $d_G(g(n), g(n-1)) \leq M/|I|$ for all $n \in I \cap \Z$.

\begin{definition}[Smoothness norms of polynomials]
Let $P: \Z \rightarrow \R$ be a polynomial of degree $d$.
\begin{enumerate}
\item For a positive integer $L \geq 1$, we define the \emph{smoothness norm} of $P$ on the interval $[1, L]$ to be
$$
\|P\|_{C^{\infty}([1,L])} := \sup_{1 \leq j \leq d} L^j\|\alpha_j\|_{\R/\Z},
$$
where $\alpha_1,\cdots,\alpha_d$ are the coefficients from the expression
$$
P(n) = \alpha_0 + \alpha_1\binom{n}{1} + \cdots + \alpha_d\binom{n}{d}.
$$
\item For an interval $I \subset \R$ with $|I| \geq 1$, we define the \emph{smoothness norm} of $P$ on $I$ to be
$$
\|P\|_{C^{\infty}(I)} := \|P(n_0+\cdot)\|_{C^{\infty}([1,L])},
$$
where $n_0,L$ are chosen such that $I \cap \Z = \{n_0+1,\cdots,n_0+L\}$.
\end{enumerate}
\end{definition}

The factorization theorem \cite[Theorem 1.19]{GT12} roughly states that every polynomial sequence can be factored into the product of a smooth one, an equidistributed one, and a rational one. See \cite[Theorem 2.12]{MSTT} for a variant adapted to a general interval $I$ and \cite[Theorem 3.6]{HeWang} for a multi-parameter version. A key step in the proof of the factorization theorem is a decomposition for non-equidistributed polynomial sequences which allows one to pass from $G$ to a proper subgroup of it. We will need the following version of \cite[Lemma 3.5]{HeWang}.

\begin{lemma}[Factorization lemma]\label{lem:factor}
Let $G/\Gamma$ be a filtered nilmanifold of some degree $d$ and dimension $D$, and complexity at most $1/\delta$ for some $\delta \in (0,1/2)$, and let $g \in \Poly(\Z^2\rightarrow G)$ be a polynomial sequence. Let $10\delta^{-2} \leq L \leq N$. Let $\chi: G\rightarrow \R/\Z$ be a nontrivial horizontal character of Lipschitz norm at most $1/\delta$, such that
$$
\|\chi \circ g(n_0,\cdot)\|_{C^{\infty}((n_0, n_0+L])} \leq 1/\delta
$$
for at least $\delta N$ values of $n_0 \sim N$. Then there exists a decomposition $g = sg'r$ into polynomial sequences $s,g',r \in \Poly(\Z^2\rightarrow G)$ such that
\begin{enumerate}
\item $\wt{s} \in \Poly(\Z^2\rightarrow G)$ defined by $\wt{s}(n_0,\ell) = s(n_0,n_0+\ell)$ is $(\delta^{-O_{d,D}(1)}, (N,L))$-smooth;
\item $g'$ takes values in $G' = \ker (q\chi)$ for some positive integer $q \leq \delta^{-O_{d,D}(1)}$;
\item $r$ is $\delta^{-O_{d,D}(1)}$-rational.
\end{enumerate}
\end{lemma}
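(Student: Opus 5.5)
Our plan is to adapt the single-step factorization argument of \cite[Lemma 3.5]{HeWang} (itself modelled on \cite[Section 7]{GT12}) to the present two-parameter setting. In that setting smoothness is measured only along the short direction $\ell \mapsto n_0+\ell$, $\ell \in [1,L]$.

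\textbf{Step 1: pigeonholing the Taylor coefficients.} Write $\chi\circ g(n_0,n_0+\ell) = \sum_{j} \beta_j(n_0)\binom{\ell}{j}$, where each $\beta_j(n_0)$ is a polynomial in $n_0$. The hypothesis says that for at least $\delta N$ values of $n_0 \sim N$ we have $\|\beta_j(n_0)\|_{\R/\Z} \leq \delta^{-1}L^{-j}$ for every $1\le j\le d$. We would apply the standard Vinogradov-type lemma (\cite[Lemma 3.2]{GT12}, or its variant in \cite{HeWang}) to each polynomial $n_0\mapsto\beta_j(n_0)$. This yields $q \leq \delta^{-O(1)}$ such that every coefficient of $q\beta_j$ (in the binomial basis in $n_0$) of degree $i$ lies within $\delta^{-O(1)}L^{-j}N^{-i}$ of an integer. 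The condition $L \ge 10\delta^{-2}$ ensures the lemma is applicable in the regime we need.

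\textbf{Step 2: splitting $q\chi\circ g$.} We then write $q\chi\circ g = \tilde\sigma + \rho$, where $\rho$ is integer-valued and $\tilde\sigma$ is a polynomial with small real coefficients. Lifting, we take a polynomial sequence $s$ with $\chi\circ s = \tilde\sigma/q$ by moving along a one-parameter direction $\exp(t X)$ with $\chi(X)$ chosen to have size $\asymp 1$ and rationality $\delta^{-O(1)}$. Similarly we take a $\delta^{-O(1)}$-rational sequence $r$ with $q\chi\circ r = \rho$. These sequences are built coefficient by coefficient in the Mal'cev coordinates, placing each coefficient in the appropriate $G_i$ so that filtration degrees are respected; this is done as in \cite[Proposition 9.2]{GT12}. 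Setting $g' = s^{-1}gr^{-1}$, the homomorphism property gives $q\chi(g') = q\chi\circ g - \tilde\sigma - \rho = 0$ modulo $\Z$. Hence $g'$ takes values in $\ker(q\chi)$, and $g'$ is again polynomial because $\Poly$ is a group (Lazard–Leibman).

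\textbf{Step 3: smoothness of $\tilde s$.} The bounds from Step 1 on the coefficients of $\tilde\sigma$ in the variables $(n_0,\ell)$ give $d_G(\tilde s(n_0,\ell),1)\le\delta^{-O(1)}$, and also $\delta^{-O(1)}/N$ and $\delta^{-O(1)}/L$ bounds for the discrete derivatives in $n_0$ and $\ell$ respectively. This is exactly $(\delta^{-O(1)},(N,L))$-smoothness.

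\textbf{Main obstacle.} We expect the main difficulty to lie in Step 1. The hypothesis controls $\beta_j(n_0)$ only on a $\delta$-dense set of $n_0$, and only modulo $1$ at scale $L^{-j}$. Extracting uniform control of all two-variable coefficients requires applying the Vinogradov lemma at mixed scales $N^iL^j$. It also requires tracking the change of basis between $\binom{n}{j}$ and $\binom{n_0+\ell}{j}$. We would follow \cite{HeWang} closely here, clearing denominators from the binomial basis at the cost of enlarging $q$ by $O_d(1)$.
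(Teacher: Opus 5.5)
Your route is the same as the paper's. You expand $\chi\circ g(n_0,n_0+\ell)=\sum_j\beta_j(n_0)\binom{\ell}{j}$, apply the Vinogradov-type recurrence lemma to each $n_0\mapsto\beta_j(n_0)$, and then run the one-step factorization of He--Wang, which the paper cites as a black box and you unroll in Steps 2--3. The gap is in the passage from Step 1 to Steps 2--3. The hypothesis constrains only $\beta_1,\dots,\beta_d$. It says nothing about $\beta_0(n_0)=\chi\circ g(n_0,n_0)$, that is, about the coefficients of $\binom{n_0}{j_0}\binom{\ell}{0}$ with $j_0\ge 1$ in $\chi\circ\wt g$. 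Your splitting $q\chi\circ g=\tilde\sigma+\rho$ with $\tilde\sigma$ small needs these coefficients to lie within $\delta^{-O(1)}N^{-j_0}$ of an integer. Step 3 uses exactly this to conclude that $\wt s$ is bounded and has $n_0$-differences at most $\delta^{-O(1)}/N$. (The $j_0=0$ coefficients with $j\ge1$ are harmless: the recurrence lemma returns only $j_0\ge 1$, but $j_0=0$ then follows from the hypothesis at a single good $n_0$.)

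This cannot be repaired inside the proof, because the statement as written is false. Take $G=\R$, $\Gamma=\Z$ with the degree-one filtration, $\chi(x)=x \bmod 1$, and $g(n_0,n)=\sqrt2\,n_0$.
\begin{itemize}
\item Since $\chi\circ g(n_0,\cdot)$ is constant, the hypothesis holds for every $n_0$.
\item In any decomposition $g=s+g'+r$ as in the conclusion, $g'+r$ is affine with coefficients in $\frac1m\Z$ for some $m\le\delta^{-O(1)}$.
\item So the $n_0$-slope of $\wt s$ lies in $\sqrt2+\frac1m\Z$ and has absolute value $\gg m^{-2}$.
\item Hence $\wt s$ varies by $\gg\delta^{O(1)}N$ as $n_0$ runs over $(N,2N]$. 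This contradicts $(\delta^{-O(1)},(N,L))$-smoothness once $N$ is large in terms of $\delta$ (say $\delta=1/4$, $L=160$).
\end{itemize}
The failure persists even when $g$ does not depend on $n_0$. For $g(n_0,n)=n/L$ the hypothesis holds, but the $n_0$-slope of $\wt s$ is forced into $\frac1L+\frac1m\Z$. This forces $N/L\le\delta^{-O(1)}$, which fails for fixed $\delta$ and $L=\lceil\sqrt N\rceil$.

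The paper's own proof has the same omission: its recurrence step also controls only $j\ge1$ before deferring to He--Wang. So what is needed is a corrected statement, not a better proof. One option is to add a hypothesis controlling the pure-$n_0$ part of $\chi\circ\wt g$. The other is to weaken conclusion (1) so that no bound uniform in $n_0$ is claimed. That second option would require re-checking the step in Section 6 that uses $d_G(s_{n_0},1_G)\le\delta^{-O(1)}$ to pigeonhole $s_{n_0}$.
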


\begin{proof}
Let $\psi: G\rightarrow \R^D$ be the Mal'cev coordinate map. Suppose that
$$
\psi(g(n_0,n_0+\ell)) = \sum_{\substack{j_0,j \geq 0 \\ j_0+j \leq d}} \omega_{j_0,j} \binom{n_0}{j_0} \binom{\ell}{j}
$$
for some $\omega_{j_0,j} \in \R^D$.  Suppose that $\chi(x) = k \cdot \psi(x)$ for all $x \in G$, where $k \in \Z^D$ satisfies  $|k| \leq 1/\delta$. For $n_0 \sim N$, the polynomial $\ell \mapsto \chi\circ g(n_0, n_0+\ell)$ is given by
$$
\sum_{0 \leq j \leq d} \Big(\sum_{0 \leq j_0 \leq d-j} (k \cdot \omega_{j_0,j}) \binom{n_0}{j_0}\Big) \binom{\ell}{j}.
$$
By assumption on the smoothness norm of $\chi\circ g(n_0,\cdot)$, for each $1 \leq j \leq d$ we have
$$
\Big\|\sum_{0 \leq j_0 \leq d-j} (k \cdot \omega_{j_0,j}) \binom{n_0}{j_0}\Big\|_{\R/\Z} \leq \frac{\delta^{-1}}{L^j}
$$
for at least $\delta N$ values of $n_0 \sim N$. Since $L \geq 10\delta^{-2}$,  a standard recurrence result (such as \cite[Lemma 4.5]{GT12}) implies that there exists a positive integer $q \leq \delta^{-O_d(1)}$ such that 
$$
\|qk \cdot \omega_{j_0,j}\|_{\R/\Z} \leq \frac{\delta^{-O(1)}}{L^j N^{j_0}}
$$
for all $1 \leq j \leq d$ and $1 \leq j_0 \leq d-j$. Now the conclusion follows applying \cite[Lemma 3.5]{HeWang} to the polynomial sequence $(n_0,\ell) \mapsto g(n_0, n_0+\ell)$.

\end{proof}

\begin{theorem}[Primes in short intervals twisted by nilsequences]\label{primes-short-intervals}
Fix positive integers $d,D$ and fix $\eps > 0$.
Let $X \geq 3$, $\delta \in (0,1/\log X)$, and $X^{3/5+\eps} \leq H \leq X$. Let $G/\Gamma$ be a filtered nilmanifold of degree $d$ and dimension $D$, and complexity at most $1/\delta$, and let $F: G/\Gamma \rightarrow\C$ be a Lipschitz function of norm at most $1/\delta$ and mean zero. Suppose that
$$
\Big|\sum_{X < n \leq X+H} \Lambda(n) F(g(n)\Gamma)\Big|^* \geq \delta H.
$$
\begin{enumerate}
\item If $G$ is non-abelian with one-dimensional center, and $F$ oscillates with a non-trivial central frequency of Lipschitz norm at most $1/\delta$, then there exists a non-trivial horizotal character $\chi: G\rightarrow \R/\Z$ of Lipschitz norm at most $\delta^{-O(1)}$ such that
$$
\|\chi\circ g\|_{C^{\infty}(X, X+H]} \leq \delta^{-O(1)}.
$$
\item If $F(g(n)\Gamma) = e(P(n))$ for some polynomial $P:\Z\rightarrow\R$ of degree at most $d$, then there exists a real number $T$ with $|T|\leq \delta^{-O(1)}(X/H)^{d+1}$ such that
$$
\|e(P(n)) n^{-iT}\|_{\operatorname{TV}((X, X+H] \cap \Z; q)} \leq
\delta^{-O(1)}
$$
for some positive integer $q\leq \delta^{-O(1)}$.
\end{enumerate}
\end{theorem}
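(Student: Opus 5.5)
This statement is essentially a black box assembled from the literature on short-interval Type I/II estimates twisted by nilsequences, and I would prove it by reduction to those results rather than from scratch. The plan is to invoke the structure theorem for primes in short intervals twisted by nilsequences in the work of Matomäki, Radziwiłł, Shao, Tao and Teräväinen (``Higher uniformity of arithmetic functions in short intervals'', the case $\theta > 3/5$, which treats all intervals, not only almost all). That work proves exactly: if $\Lambda$ (after subtracting a suitable Cramér/Heath-Brown model $\Lambda^\sharp$) correlates with a nilsequence on $(X,X+H]$ with $H \ge X^{3/5+\eps}$, then the nilsequence has a degenerate structure.

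The key steps run as follows. First, I would write $\Lambda = \Lambda^\sharp + (\Lambda - \Lambda^\sharp)$, where $\Lambda^\sharp$ is a Heath-Brown type approximant with level $X^{\eps'}$. Since $F$ has mean zero, the $\Lambda^\sharp$ contribution is handled by type I equidistribution. Here $\Lambda^\sharp$ is a short combination of divisor-type sums with small moduli. Summing $F(g(n)\Gamma)$ over progressions of length $\gg H/X^{\eps'}$ gives $o(\delta H)$ unless $g$ fails to be equidistributed. By the quantitative Leibman theorem (\cite[Theorem 2.9]{GT12}, in the short-interval form of \cite{MSTT}), this produces a horizontal character $\chi$ with $\|\chi\circ g\|_{C^\infty(X,X+H]} \le \delta^{-O(1)}$.

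Second, for the remainder $\Lambda - \Lambda^\sharp$, I would use the Heath-Brown identity to decompose it into type I, type $I_2$ and type II sums. The range $H \ge X^{3/5+\eps}$ is exactly what is needed to cover all combinatorial configurations. Type II sums are treated by Cauchy--Schwarz and the van der Corput trick, reducing to equidistribution of $g(mn)$ versus $g(m'n)$ on the product nilmanifold $G\times G$. In case (1), the hypotheses of a one-dimensional center and a nontrivial central frequency allow the standard argument (as in \cite{GT-nil}) to return a nontrivial horizontal character on $G$ itself.

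In case (2), where the nilmanifold is abelian and $F(g(n)\Gamma) = e(P(n))$, the obstruction is no longer a small smoothness norm of $P$ alone. One must also allow twisting by $n^{iT}$, since $\Lambda(n)n^{iT}$ correlates trivially with $e(T\log n/2\pi)$, which is approximated by a polynomial on $(X,X+H]$. The argument there runs through major arcs and Dirichlet polynomial mean/large value estimates, giving $|T| \le \delta^{-O(1)}(X/H)^{d+1}$ and the total variation bound modulo $q$.

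The main obstacle is the type II and $I_2$ analysis at the edge of the $3/5$ range, which needs the Dirichlet polynomial large value machinery in case (2). I would not reprove it but cite \cite{MSTT} (Theorems 1.1 and 1.3, and the corresponding structural propositions), checking only that their $\delta$-dependence is polynomial as stated.
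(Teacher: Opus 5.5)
Your overall route (Heath-Brown's identity, then inverse theorems for type I, type $I_2$ and type II sums from \cite{MSTT}) is the paper's route. As written, however, the proposal points to the wrong results and leaves out the one argument the proof actually consists of.

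The headline discorrelation and Gowers-uniformity theorems of \cite{MSTT} are not inverse statements, so citing them does not work here. They do not output the structured conclusions (1) and (2). They only save $\log^{-A}X$, whereas the theorem needs $\delta^{-O(1)}$ losses for $\delta$ as small as $X^{-c}$, since it is later applied with $\delta$ a small power of $N$. And for $\Lambda$ they are stated at exponent $5/8$ rather than $3/5$.

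What is needed is \cite[Theorem 4.2]{MSTT}. Parts (i), (iv), and (ii)/(iii) give the structure for a single type I, type $I_2$, and type II sum respectively, unless $H\ll\delta^{-O(1)}A_I$, $H\ll X^{1/3}A_{I_2}^{2/3}$, or $H\ll\delta^{-O(1)}\max(A_{II}^+,X/A_{II}^-)$.

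Your sentence that ``$H\ge X^{3/5+\eps}$ is exactly what is needed to cover all combinatorial configurations'' is where the proof has to be done. Write a Heath-Brown component as $a^{(1)}*\cdots*a^{(\ell)}$ with $\ell\le 20$, scales $N_1\ge\cdots\ge N_\ell$, $\prod N_i\asymp X$, and $\mu$-factors only at scales $N_i\ll X^{0.1}$. The $(\log X)^{-O(1)}$ loss from the identity is at least $\delta^{O(1)}$ because $\delta<1/\log X$. One may also assume $\delta>X^{-c}$, so that the $X^{\eps}$ margin absorbs the $\delta^{-O(1)}$ factors. The cases are then:
\begin{enumerate}
\item If $N_1\ge X^{2/5}$, then $a^{(1)}$ is $1$ or $\log$, and you have a type I sum with $A_I\ll X^{3/5}$.
\item If $N_1<X^{2/5}$ but $N_1N_2\ge X^{3/5}$, then $N_2>X^{1/5}$ and both factors are smooth. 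You have a type $I_2$ sum with $A_{I_2}\ll X^{2/5}$, so $X^{1/3}A_{I_2}^{2/3}\ll X^{3/5}$.
\item Otherwise either $N_1N_2\in[X^{2/5},X^{3/5})$, or $N_2<X^{1/5}$ and the first partial product $N_1\cdots N_k\ge X^{2/5}$ is at most $X^{3/5}$. Either way you get a type II sum with $X^{2/5}\ll A_{II}^-\le A_{II}^+\ll X^{3/5}$.
\end{enumerate}

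Two further corrections. First, splitting off $\Lambda^\sharp$ is unnecessary. $F$ has mean zero and the statement is an inverse theorem, so the paper applies Heath-Brown's identity to $\Lambda$ directly. Your split is harmless, since $\Lambda^\sharp$ is just one more type I sum, but it buys nothing.

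Second, and more important, case (2) needs no Dirichlet polynomial mean or large value input; the ``main obstacle'' you identify is not there. The twist $n^{iT}$ is part of the \emph{conclusion}. It is produced by the type II inverse theorem \cite[Theorem 4.2(iii)]{MSTT}, or by \cite[Proposition 7.1]{MSTT} or \cite[Proposition 2.2]{MS} for the coefficient form in Remark \ref{rem:primes-short-intervals}. The type I and $I_2$ cases give $T=0$. Dirichlet polynomial estimates would only be needed to show that $\Lambda(n)n^{iT}$ itself has cancellation on $(X,X+H]$. That is not claimed, and it is false for $|T|\lesssim X/H$, which is precisely why the later sections can only extract $|h|N^{\gamma}\le\delta^{-O(1)}(N/L)^2$.
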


\begin{remark}\label{rem:primes-short-intervals}
In Case (2), the definition of the total variation norm $\|\cdot\|_{\operatorname{TV}((X, X+H] \cap \Z; q)}$ can be found in \cite[Definition 2.1]{MSTT}. In fact, for our purposes, it is more convenient to rewrite this conclusion in terms of the coefficients of  $P(n)$ as follows. Write
$$
P(n) = \sum_{j=0}^d \beta_j(n - X)^j.
$$
Then there exists a positive integer $q \leq \delta^{-O(1)}$ such that
$$
\|q (j\beta_j + (j+1)X\beta_{j+1})\|_{\R/\Z} \leq \delta^{-O(1)}\frac{X}{H^{j+1}}
$$
for all $1 \leq j \leq d$, with the convention that $\beta_{d+1}=0$. These alternate conclusions can be obtained by applying \cite[Proposition 7.1]{MSTT} (or \cite[Proposition 2.2]{MS}) in place of \cite[Theorem 4.2(iii)]{MSTT} in the type $II$ estimate.
\end{remark}

\begin{proof}
We may assume that $X$ is sufficiently large in terms of $d,D,\eps$, and that $\delta > X^{-c}$ for some constant $c>0$ sufficiently small in terms of $d,D,\eps$.
By hypothesis and Heath-Brown's identity (see also \cite[Lemma 2.16]{MSTT}), there exists a function $f:\N\rightarrow\R$ of the form
$$
f = a^{(1)}*\cdots *a^{(\ell)}
$$
for some $\ell \leq 20$, where each $a^{(i)}$ is supported on $(N_i, 2N_i]$ for some $N_i \geq 1/2$ with $N_1N_2\cdots N_{\ell} \asymp X$, and each $a^{(i)}(n)$ is either $1$, $\log n$, or $\mu(n)$ for $n \in (N_i, 2N_i]$, such that
$$
\Big|\sum_{X < n \leq X+H} f(n)) F(g(n)\Gamma)\Big|^* \geq \delta H (\log X)^{-O(1)} \geq \delta^{O(1)}H.
$$
Moreover, if $a^{(i)}(n) = \mu(n)$ on $(N_i, 2N_i]$ then $N_i \ll X^{0.1}$. Without loss of generality, assume that $N_1 \geq \cdots \geq N_{\ell}$.

\subsubsection*{Type $I$ case}

First consider the case when $N_1 \geq X^{2/5}$. Then $a^{(1)}(n)$ must be either $1$ or $\log n$ on its support, and hence $f = \alpha*\beta$, where $\alpha = a^{(2)}*\cdots*a^{(\ell)}$ and $\beta = a^{(1)}$. Hence $f$ is a $((\log X)^{-O(1)}, A_I)$ type $I$ sum, where 
$$
A_I \ll N_2\cdots N_{\ell} \ll X^{3/5}.
$$
Since the bound $H \ll \delta^{-O(1)}A_I$ fails, the desired conclusions follow from \cite[Theorem 4.2(i)]{MSTT}.

\subsubsection*{Type $I_2$ case}

Now consider the case when $N_1N_2 \geq X^{3/5}$. Then $N_1 \geq N_2 \geq X^{3/10}$ and thus both $a^{(1)}(n)$ and $a^{(2)}(n)$ must be either $1$ or $\log n$ on their support. Hence $f = \alpha*\beta_1*\beta_2$, where $\alpha = a^{(3)}*\cdots*a^{(\ell)}$, $\beta_1=a^{(1)}$, and $\beta_2 = a^{(2)}$, and $f$ is a $((\log X)^{-O(1)}, A_{I_2})$ type $I_2$ sum, where
$$
A_{I_2} \ll N_3\cdots N_{\ell} \ll X^{2/5}.
$$
Since the bound $H \ll X^{1/3}A_{I_2}^{2/3}$ fails, the desired conclusions follow from \cite[Theorem 4.2(iv)]{MSTT}.

\subsubsection*{Type $II$ case}

Henceforth we may assume that $N_1 < X^{2/5}$ and $N_1N_2 < X^{3/5}$. If $N_1N_2 \geq X^{2/5}$, then $f = \alpha*\beta$, where $\alpha = a^{(1)}*a^{(2)}$ and $\beta = a^{(3)}*\cdots*a^{(\ell)}$. Hence $f$ is a $((\log X)^{-O(1)}, A_{II}^-, A_{II}^+)$ type $II$ sum with 
$$
X^{2/5} \ll A_{II}^- \leq A_{II}^+ \ll X^{3/5}.
$$
Since the bound $H \ll \delta^{-O(1)}\max(A_{II}^+, X/A_{II}^-)$ fails, the desired conclusions in (1) and (2) follow from (ii) and (iii) in \cite[Theorem 4.2]{MSTT}, respectively.

It remains to consider the case when $N_1N_2 < X^{2/5}$. Then $N_2 < X^{1/5}$. Let $k$ be the smallest index such that $N_1N_2\cdots N_k \geq X^{2/5}$. Then $k > 2$ and thus $N_k \leq N_2 \leq X^{1/5}$. Moreover,
$$
N_1N_2\cdots N_k = (N_1\cdots N_{k-1})N_k \leq X^{2/5} \cdot X^{1/5} = X^{3/5}.
$$
Thus $f = \alpha*\beta$, where $\alpha = a^{(1)}*\cdots*a^{(k)}$ and $\beta = a^{(k+1)}*\cdots*a^{(\ell)}$ is a type $II$ sum as before, and the proof is concluded.
\end{proof}

\section{Proof of Theorem \ref{BF-nil} assuming Theorem \ref{prime-hngamma-nil}}\label{sec:initial-reduction}

Suppose that $1-\gamma$ is sufficiently small in terms of $d,D$. In the proof we allow all implied constants to depend on $d,D$. We may assume that $\delta \geq N^{-\eps}$ for any fixed $\eps > 0$, since otherwise the conclusion holds trivially. For $\omega_n = \Lambda(n)F(g(n)\Gamma)$, our goal is to prove that
$$
\frac{1}{\gamma}\sum_{\substack{n \leq N \\ n \in \PS_{1/\gamma}}} n^{1-\gamma} \omega_n = \sum_{n \leq N} \omega_n + O(N^{1-c})
$$
for some sufficiently small $c = c_{d,D} > 0$. Using the identity
$$
1_{n \in \PS_{1/\gamma}} = \lfloor -n^{\gamma}\rfloor - \lfloor -(n+1)^{\gamma}\rfloor = \gamma n^{\gamma-1} + O(n^{\gamma-2}) + \psi(-(n+1)^{\gamma}) - \psi(-n^{\gamma}),
$$
where $\psi(x) = \{x\}-1/2$, we deduce that
\begin{equation}\label{e:w_n_ps_vs_nature}
\frac{1}{\gamma}\sum_{\substack{n \sim N \\ n \in \PS_{1/\gamma}}} n^{1-\gamma} \omega_n = \sum_{n \sim N}\omega_n + O(N^{o(1)}) + \frac{1}{\gamma}\sum_{n \sim N} n^{1-\gamma} \omega_n \Big(\psi(-(n+1)^{\gamma}) - \psi(-n^{\gamma})\Big).   
\end{equation}

Let $H \geq 1$ be a parameter to be chosen later; in fact, we will take $H = N^{1-\gamma+c+\eps}$ for some sufficiently small $\eps > 0$. By Lemma~\ref{approx-sawtooth}, we may approximate $\psi(x)$ by 
$$
\psi^*(x) = \sum_{1 \leq |h| \leq H} a_h e(hx), 
$$
where $|a_h| \ll |h|^{-1}$, such that for any real $x$,
$$
|\psi(x) - \psi^*(x)| \leq \sum_{|h| \leq H} b_h e(hx),
$$
where $|b_h| \ll H^{-1}$. Let $E(x) = \psi(x) - \psi^*(x)$. Thus for $u \in \{0,1\}$ we have
$$
\frac{1}{\gamma}\sum_{n \sim N} n^{1-\gamma}\omega_n \psi(-(n+u)^{\gamma}) = 
\sum_{1 \leq |h| \leq H} a_h \sum_{n \sim N} \frac{n^{1-\gamma}}{\gamma}\omega_n e(-h(n+u)^{\gamma}) + E,
$$
where the error term $E$ is
$$
E = \frac{1}{\gamma}\sum_{n \sim N} n^{1-\gamma}\omega_n \Big(\psi(-(n+u)^{\gamma}) - \psi^*(-(n+u)^{\gamma})\Big).
$$
We have
$$
E \ll \sum_{n \sim N} N^{1-\gamma}\|\omega\|_{\infty} \sum_{|h| \leq H} b_h e(-h(n+u)^{\gamma})  \ll \frac{N^{1-\gamma}\|\omega\|_{\infty}}{H} \sum_{|h| \leq H} \Big|\sum_{n \sim N} e(-h(n+u)^{\gamma})\Big|
$$
The function $f(x) = -h(x+u)^{\gamma}$ satisfies 
$$
|f''(x)| \asymp |h| (x+u)^{\gamma-2} \asymp |h| N^{\gamma-2}
$$
for $x \sim N$. By van der Corput's inequality (Lemma \ref{van-der-corput}), we obtain for $h \neq 0$ the bound
$$
\sum_{n \sim N} e(-h(n+u)^{\gamma}) \ll N (|h|N^{\gamma-2})^{1/2} +  (|h|N^{\gamma-2})^{-1/2} \ll N^{\gamma/2} |h|^{1/2} + N^{1-\gamma/2} |h|^{-1/2}. 
$$
The $h=0$ term contributes $O(N^{2-\gamma+o(1)}H^{-1})$ to $E$. Hence, after summing over $h$, we get
$$
E  \ll (N^{2-\gamma}H^{-1} + N^{1-\gamma/2}H^{1/2} + N^{2-3\gamma/2} H^{-1/2})N^{o(1)}.
$$
Thus we have $E \ll N^{1-c}$ thanks to our choice $H = N^{1-\gamma+c+\eps}$. We have thus shown that
$$
\frac{1}{\gamma}\sum_{\substack{n \sim N \\ n \in \PS_{1/\gamma}}} n^{1-\gamma} \omega_n = \sum_{n \sim N}\omega_n + S + O(N^{1-c}),
$$
where
$$
S = \sum_{1 \leq |h| \leq H} a_h \sum_{n \sim N} \frac{n^{1-\gamma}}{\gamma} \omega_n \Big(e(-h(n+1)^{\gamma}) - e(-hn^{\gamma})\Big).
$$
From the approximation
$$
e(-h(n+1)^{\gamma}) - e(-hn^{\gamma}) = -h\gamma n^{\gamma-1} e(-hn^{\gamma}) + O(N^{2\gamma-2}|h|^2)
$$
it follows that
$$
S = -\sum_{1 \leq |h| \leq H} ha_h \sum_{n \sim N} \omega_n e(-hn^{\gamma})  + O(N^{\gamma}H^2).
$$
Thus our task is reduced to prove the following estimate:
$$
\sum_{1 \leq |h| \leq H} \Big| \sum_{n \sim N} \Lambda(n) e(hn^{\gamma}) F(g(n)\Gamma)\Big| \ll N^{1-c}.
$$
If the above estimate fails, then there exists $1 \leq h \leq H$ such that
$$
\Big| \sum_{n \sim N} \Lambda(n) e(hn^{\gamma}) F(g(n)\Gamma)\Big| \geq \eta N,
$$
where $\eta = N^{-c}/H = N^{-(1-\gamma)-2c-\eps}$. Splitting the range $n \sim N$ into short intervals of length $N^{1-\eps}$ and applying Theorem \ref{prime-hngamma-nil} (with $\delta$ replaced by $\eta$), we conclude that
$$
|h| N^{\gamma} \leq \eta^{-O(1)} N^{2\eps} \leq N^{O(1-\gamma+c+\eps)}.
$$
This is a contradiction since the right-hand side above is at most $N^{0.1}$ (say).

\section{Estimates with polynomial phases}\label{sec:poly}

In this section we prove Proposition \ref{primes-hngamma-poly}, repeated here for convenience.

\begin{proposition*}
Fix $d,K$, a non-integer $\gamma \in \R$, and $\eps > 0$. Let $N \geq 3$ and $N^{3/5+\eps} \leq L \leq N^{1-\eps}$. Let $h$ be real with $|h| \leq N^K$, and let $P:\Z^2\rightarrow\R$ be a polynomial of degree $d$. Suppose that
$$
\sum_{n_0 \sim N} \Big|\sum_{n_0 < n \leq n_0+L} \Lambda(n) e(hn^{\gamma} + P(n_0,n))\Big|^* \geq \delta NL
$$
for some $\delta \in (0,1/\log N)$. Then $|h|N^{\gamma} \leq \delta^{-O(1)}(N/L)^2$.
\end{proposition*}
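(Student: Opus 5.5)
Taylor-expand $n^{\gamma}$ on each short interval so the phase becomes a genuine polynomial, apply Theorem \ref{primes-short-intervals}(2) interval by interval, and read off $h$ from the resulting constraint on the second-order coefficient.

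\textbf{Step 1: polynomial approximation of $n^\gamma$.} Write $n=n_0+\ell$ with $0<\ell\le L$ and expand
$$
n^{\gamma}=\sum_{j=0}^{J}\binom{\gamma}{j}n_0^{\gamma-j}\ell^j+O(N^{\gamma}(L/N)^{J+1}).
$$
Since $L\le N^{1-\eps}$ and $|h|\le N^K$, choosing $J=J(K,\eps)$ large makes $|h|N^{\gamma}(L/N)^{J+1}\le N^{-1}$. Then $e(hn^\gamma)$ can be replaced by $e(Q_{n_0}(n))$, where
$$
Q_{n_0}(n)=h\sum_{j\le J}\binom{\gamma}{j}n_0^{\gamma-j}(n-n_0)^j
$$
is a polynomial in $n$ of degree $J$. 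The replacement costs $O(L\log N/N)$ per interval, which is negligible.

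\textbf{Step 2: a positive proportion of intervals.} Each inner sum is trivially bounded by $O(L\log N)$. So the hypothesis gives at least $\delta^{O(1)}N$ values $n_0\sim N$ for which
$$
\Big|\sum_{n_0<n\le n_0+L}\Lambda(n)e(Q_{n_0}(n)+P(n_0,n))\Big|^*\ge \tfrac{\delta}{2}L.
$$
Fix one such $n_0$ and apply Theorem \ref{primes-short-intervals}(2) with $X=n_0$, $H=L$, degree $\max(d,J)$, and polynomial $n\mapsto Q_{n_0}(n)+P(n_0,n)$. The hypothesis $L\ge N^{3/5+\eps}$ matches the range required there.

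\textbf{Step 3: extract the constraint on $h$.} Use the coefficient form of Remark \ref{rem:primes-short-intervals}. Write $Q_{n_0}(n)+P(n_0,n)=\sum_j\beta_j(n-n_0)^j$, where
$$
\beta_j=h\binom{\gamma}{j}n_0^{\gamma-j}+p_j(n_0),
$$
and $p_j$ is a polynomial in $n_0$ of degree at most $d-j$, with $p_j=0$ for $j>d$. The remark gives some $q\le\delta^{-O(1)}$ with
$$
\|q(j\beta_j+(j+1)n_0\beta_{j+1})\|_{\R/\Z}\le\delta^{-O(1)}n_0L^{-j-1}.
$$
Take $j=1$. The $h$-part of $\beta_1+2n_0\beta_2$ is
$$
h n_0^{\gamma-1}\big(\gamma+\gamma(\gamma-1)\big)=h\gamma^2n_0^{\gamma-1},
$$
which is nonzero because $\gamma$ is not an integer (so $\gamma\neq0$). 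Hence
$$
\|q(h\gamma^2n_0^{\gamma-1}+R(n_0))\|_{\R/\Z}\le\delta^{-O(1)}N/L^2,
$$
with $R$ a polynomial of degree at most $d-1$. This holds for $\delta^{O(1)}N$ values of $n_0$, and pigeonholing lets us also fix $q$.

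\textbf{Step 4 (main obstacle): convert the bound mod 1 into a real bound.} The non-polynomial function $\phi(x)=qh\gamma^2x^{\gamma-1}$, plus a polynomial of degree below $d$, must be close to an integer on many $n_0\sim N$. My plan is to take a $d$-th finite difference to kill $R$ (or, equivalently, apply the argument at higher $j$ to involve higher derivatives). Then use the fact that $\phi^{(k)}(x)\asymp|h|N^{\gamma-1-k}$, and that a smooth function with controlled nonvanishing derivatives can be near integers only on a small set: van der Corput combined with Erd\H{o}s--Tur\'an, Lemmas \ref{van-der-corput} and \ref{erdos-turan}, controls the distribution of $\phi(n_0)$ mod $1$ unless $|h|N^{\gamma-1}$ is small.

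There is a cleaner alternative that I would try first. Use the conclusion at $j=1$ for $n_0$ and for $n_0+m$ with $|m|\le L$, since the intervals overlap and the $T$ of Theorem \ref{primes-short-intervals} should be essentially shared. This shows the real quantity $h\gamma^2n_0^{\gamma-1}+R(n_0)$, with the integer part absorbed, varies by at most $\delta^{-O(1)}N/L^2$. Comparing variation over $n_0$ in a range of length $N$ then gives $|h|N^{\gamma-1}\le\delta^{-O(1)}N/L^2$, i.e.\ $|h|N^\gamma\le\delta^{-O(1)}(N/L)^2$. I expect the rigorous execution of this "mod 1 to real" step, handling the unknown integer shifts and the polynomial $R$ across different $n_0$, to be the main difficulty.
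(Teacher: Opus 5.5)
Your Steps 1--3 match the paper's argument: Taylor expansion about $n_0$, Theorem \ref{primes-short-intervals}(2) in the coefficient form of Remark \ref{rem:primes-short-intervals} at $j=1$, the identity $\beta_1+2n_0\beta_2=h\gamma^2n_0^{\gamma-1}+R(n_0)$, and pigeonholing $q$. The gap is Step 4, which you leave open and which carries the real content.

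\textbf{The missing idea.} The paper closes Step 4 with Proposition \ref{prop:recurrence}: if $\|An^{\beta}+P(n)\|$ lies in an arc of length $\eps<\delta/2$ for $\delta N$ values of $n\sim N$, then $|A|N^{\beta}\le\eps\,\delta^{-O(1)}$. What you are missing is the \emph{dilation}.
\begin{enumerate}
\item Multiply by $\lambda=\lfloor\delta/(2\eps)\rfloor$. Then $\lambda(An^{\beta}+P(n))$ lies in an arc of length at most $\delta/2$ for $\delta N$ values.
\item Erd\H{o}s--Tur\'an produces some $j\ll\delta^{-1}$ with $|\sum_{n\sim N}e(j\lambda(An^{\beta}+P(n)))|\gg\delta^2N$.
\item The exponential sum lemma gives $|j\lambda A|N^{\beta}\le\delta^{-O(1)}$. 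Dividing by $\lambda$ recovers the factor $\eps=\delta^{-O(1)}N/L^2$.
\end{enumerate}
Without dilation, Erd\H{o}s--Tur\'an on an arc of length $\eps$ only gives a large sum at some frequency $j\le\delta^{-O(1)}$. That yields at best $|h|N^{\gamma}\le\delta^{-O(1)}N$, losing exactly the factor $N/L^2$ in the statement. This loss matters: in Section \ref{sec:initial-reduction}, with $L=N^{1-\eps}$, it is the bound $(N/L)^2=N^{2\eps}$ that contradicts $|h|\ge1$.

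\textbf{Handling $R$.} You cannot kill $R$ by finite-differencing the mod-$1$ condition, because that condition holds only on a $\delta^{O(1)}$-dense set of $n_0$. In the paper $R$ is handled inside the exponential sum lemma. On a subinterval of length $L'$ with $|A|N^{\beta}(L'/N)^k\asymp\delta$, one Taylor-expands to degree $k-1>\deg P$. The leading coefficient $\alpha=A\binom{\beta}{k-1}n_0^{\beta-k+1}$ then does not see $P$. Weyl's inequality gives $\|q'\alpha\|\le\delta^{-O(1)}(L')^{-(k-1)}$ for some $q'\le\delta^{-O(1)}$. The choice $k>K+\beta+1$ forces $|q'\alpha|<1/2$, so this becomes a genuine bound on $|\alpha|$.

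\textbf{Higher $j$.} Passing to some $j>d$ in the Remark is not equivalent to the $j=1$ argument. It does eliminate $P$, giving $\|qh\gamma\binom{\gamma}{j}n_0^{\gamma-j}\|\le\delta^{-O(1)}N/L^{j+1}$. But even after a mod-$1$-to-real conversion it yields only $|h|N^{\gamma}\le\delta^{-O(1)}(N/L)^{j+1}$ with $j\ge d+1$, which is weaker than the claimed $(N/L)^2$.

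\textbf{Your alternative (b) does not work.}
\begin{itemize}
\item Theorem \ref{primes-short-intervals}(2) is a separate existence statement for each interval. Nothing forces the $T$'s (or $q$'s) of overlapping intervals to agree, and $T$ has in any case been eliminated from the coefficient form you use.
\item More fundamentally, write $u(x)=q(h\gamma^2x^{\gamma-1}+R(x))$. The facts $\|u(n_0)\|\le\eps$ and $\|u(n_0+m)\|\le\eps$ only say that $u(n_0+m)-u(n_0)$ is within $2\eps$ of an integer. ``Absorbing the integer part'' requires knowing a priori that this difference is below $1/2$, i.e.\ control of $u'$. That is essentially the conclusion you want, since $R$ is arbitrary and $|h|$ can be as large as $N^K$.
\item Even granting local control, you would chain across $N/L$ steps through a set of good $n_0$ whose gaps need not be below $L$, costing another factor $N/L$.
\end{itemize}
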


We may assume that $\delta \geq N^{-c}$ for some sufficiently small constant $c>0$, since otherwise the conclusion is trivial. The hypothesis implies that we have
$$
\Big|\sum_{n_0 < n \leq n_0 + L} \Lambda(n) e(hn^{\gamma} + P(n_0,n))\Big|^* \gg \delta L
$$
for $\gg \delta N$ values of $n_0 \sim N$. For $n_0 < n \leq n_0 + L$, Taylor expansion gives
$$
n^{\gamma} = \sum_{0 \leq i < k} \binom{\gamma}{i}n_0^{\gamma-i}(n-n_0)^i + O(N^{\gamma-k}L^k).
$$
By choosing $k$ large enough, we may ensure that the error term above is negligible, and hence
$$
\Big|\sum_{n_0 < n \leq n_0+L} \Lambda(n) e(Q(n_0,n) + P(n_0,n))\Big| \gg \delta L,
$$
where
$$
Q(n_0,n) = h\sum_{0 \leq i < k} \binom{\gamma}{i}n_0^{\gamma-i}(n-n_0)^i.
$$
Let $P(n) = \sum_{i=0}^d\alpha_in^i$. Let $\beta_1,\beta_2$ be the coefficient of $\ell$, $\ell^2$, respectively, in the polynomial $\ell\mapsto Q(n_0,n_0+\ell) + P(n_0,n_0+\ell)$. By Theorem \ref{primes-short-intervals}(2) and Remark \ref{rem:primes-short-intervals}, there exists a positive integer $q \leq \delta^{-O(1)}$ such that
$$
\|q(\beta_1 + 2n_0\beta_2)\| \leq \delta^{-O(1)}\frac{N}{L^2}.
$$
By pigeonholing, we may assume that $q$ is independent of $n_0$ and the inequality above holds for at least $\delta^{O(1)}N$ values of $n_0 \sim N$.
One can work out the formulas for $\beta_1,\beta_2$ as follows. Write
$$
P(n_0,n_0+\ell) = \sum_{0 \leq i \leq d} R_i(n_0) \ell^i, 
$$
where each $R_i(n_0)$ is a polynomial of degree at most $d$. Then
$$
\beta_1 = h\gamma n_0^{\gamma-1} + R_1(n_0), \ \ \beta_2 = h\binom{\gamma}{2}n_0^{\gamma-2} + R_2(n_0).
$$
Hence
$$
q(\beta_1 + 2n_0\beta_2) = qh\gamma^2n_0^{\gamma-1} + R(n_0), \text{ where }
R(n_0) = q(R_1(n_0) + 2n_0R_2(n_0)).
$$
It follows that $\|qh\gamma^2n_0^{\gamma-1} + R(n_0)\|$ lies in an arc $I$ of length at most $\delta^{-O(1)}N/L^2$ for at least $\delta^{O(1)}N$ values of $n_0 \sim N$. Then $|I| < \delta/2$ since $\delta > N^{-c}$.  Hence by the recurrence result below (Proposition \ref{prop:recurrence}), we have
$$
|qh\gamma^2|N^{\gamma-1} \leq \delta^{-O(1)}\frac{N}{L^2}.
$$
The conclusion follows.

\begin{proposition}\label{prop:recurrence}
Fix $d,K$ and a non-integer $\beta \in \R$. Let $A$ be real with $|A| \leq N^K$, and let $P: \Z\rightarrow \R$ be a polynomial of degree $d$. Let $\delta \in (0,1/2)$, $\eps \in (0,\delta/2)$, and let $I \subset \R/\Z$ be an interval of length $\eps$. Suppose that $\|An^{\beta} + P(n)\| \in I$ for at least $\delta N$ values of $N < n \leq 2N$. Then $|A|N^{\beta} \leq \eps\delta^{-O(1)}$.
\end{proposition}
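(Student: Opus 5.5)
The plan is to convert the concentration hypothesis into a lower bound for exponential sums with a smooth minorant-type kernel, and then to bound those sums by van der Corput's $k$-th derivative test applied with $k=d+1$, which removes the polynomial $P$. Write $x_n = An^{\beta}+P(n)$ and $M = |A|N^{\beta}$, and let $c_I$ be the centre of $I$. Fix a Fejér-type kernel $\Phi:\R/\Z\to[0,\infty)$ with the following properties: $\Phi \gg 1$ on $[-\eps/2,\eps/2]$; $\widehat{\Phi}$ is supported on $|j| \le J := \lceil 1/\eps\rceil$; $\widehat{\Phi}(0) \asymp \eps$; and $|\widehat{\Phi}(j)| \ll \eps$ for all $j$. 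Summing $\Phi(x_n - c_I)$ over the at least $\delta N$ values of $n$ with $x_n \in I$ and expanding in Fourier series gives
$$
\delta N \ll \sum_{N<n\le 2N}\Phi(x_n-c_I) \le \widehat{\Phi}(0)N + \eps\sum_{1\le |j|\le J}|S_j|, \qquad S_j := \sum_{N<n\le 2N} e(jAn^{\beta}+jP(n)).
$$
Since $\eps<\delta/2$, after adjusting the constants in $\Phi$ the main term $\widehat{\Phi}(0)N$ can be absorbed, and we are left with $\eps\sum_{1\le j\le J}|S_j| \gg \delta N$ (using $S_{-j}=\overline{S_j}$).

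Next I would split the range of $j$ at $j \asymp 1/M$. For $j \le 1/M$ I use only the trivial bound $|S_j|\le N$. This part contributes at most $\eps N/M$, and if it dominated we would get $M \ll \eps/\delta$, which is already the desired conclusion. For $1/M < j \le J$ I bound $S_j$ using the $(d+1)$-st derivative test. Set $f(x)=jAx^{\beta}+jP(x)$. Then
$$
f^{(d+1)}(x) = jA\beta(\beta-1)\cdots(\beta-d)\,x^{\beta-d-1},
$$
because the polynomial $P$ of degree $d$ disappears. Since $\beta\notin\Z$, the constant $\beta(\beta-1)\cdots(\beta-d)$ is nonzero. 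Hence $|f^{(d+1)}| \asymp_{d,\beta} \lambda := jMN^{-d-1}$ on $(N,2N]$. The van der Corput estimate of order $k=d+1$ gives
$$
S \ll H\big(\lambda H^{k}\big)^{1/(2^{k}-2)} + H^{1-2^{2-k}}\lambda^{-1/(2^k-2)}
$$
for a sum over an interval of length $H$ on which this holds. I would apply it to subintervals of length $H\le N$. The length $H$ is chosen so that the normalized size $\lambda H^{k} = jM(H/N)^{k}$ is a small power of $N$ when $jM$ is huge (here the hypothesis $|A|\le N^K$ makes this possible), and $H=N$ otherwise. With $\kappa = 2^{-(d+1)}$, I expect this to produce a bound of the shape
$$
|S_j| \ll N\big((jM)^{-\kappa} + N^{-\kappa'}\big)
$$
for some $\kappa' > 0$ depending on $d,K$.

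Finally I sum over $j$. We have $\eps\sum_{j\le J} N(jM)^{-\kappa} \ll \eps J^{1-\kappa}M^{-\kappa}N \ll (\eps/M)^{\kappa}N$. The power-saving term contributes $\eps J N^{1-\kappa'} \ll N^{1-\kappa'}$, which is at most $\delta N/10$ as soon as $\delta \ge N^{-\kappa'/2}$; if $\delta < N^{-\kappa'/2}$, the claimed bound $M \le \eps\delta^{-O(1)}$ should instead be obtainable directly. Comparing with $\gg \delta N$ then gives $(\eps/M)^{\kappa}\gg\delta$, that is, $M \le \eps\,\delta^{-1/\kappa}$, which is the claim with exponent $O_d(1)$. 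The step I expect to be the main obstacle is the van der Corput bookkeeping in the middle paragraph: making the $(d+1)$-st derivative test yield a clean power saving in $jM$, uniformly across the whole range from $jM \approx 1$ to $jM \approx N^{K+1}$. This requires choosing the subinterval length $H$ correctly in each regime so that neither term of the bound exceeds the trivial one, and handling the regime where $\delta$ is extremely small separately.
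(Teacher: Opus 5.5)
Your Fourier set-up and the final summation over $j$ are fine in outline, but the middle step has a genuine gap. The bound $|S_j|\ll N\bigl((jM)^{-\kappa}+N^{-\kappa'}\bigr)$ is true when $|jA|$ is polynomially bounded; it follows from the exponential-sum lemma proved in the paper. However, it cannot be obtained from the $(d+1)$-st derivative test for any choice of $H$.

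First, the test is misquoted. The correct form is $\sum_{n\in I}e(f(n))\ll H\lambda^{1/(2^k-2)}+H^{1-2^{2-k}}\lambda^{-1/(2^k-2)}$ for $|I|\le H$ and $|f^{(k)}|\asymp\lambda$. Your first term $H(\lambda H^k)^{1/(2^k-2)}$ is already at least $H$ whenever $\lambda H^k\ge 1$. With the correct form, the bound divided by $H$ is $\lambda^{1/(2^k-2)}+H^{-2^{2-k}}\lambda^{-1/(2^k-2)}$. Here $\lambda=jM/N^{d+1}$ does not depend on $H$, and the second term grows as $H$ shrinks. So for $H\le N$ the test is nontrivial only when $N^{-4+2^{3-k}}\ll\lambda\ll 1$. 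Two regimes escape it.

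(i) If $jM\gg N^{d+1}$, then $\lambda\gg1$ and nothing is gained on any interval. This happens for every $j$ once $M\gg N^{d+1}$, which $|A|\le N^K$ allows as soon as $K+\beta>d+1$. It also happens for the larger $j$ whenever $M/\eps> N^{d+1}$. Choosing $H$ so that $\lambda H^k$ is a small power of $N$ does not help; the order of differentiation must depend on $K$, not only on $d$.

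(ii) More seriously, for $d\ge 3$ and $1<jM<N^{d-3+2^{2-d}}$ the second term exceeds $N$. For $d=3$ it is $N^{29/28}(jM)^{-1/14}$. This range contains the decisive one, $jM\le\delta^{-O(1)}$ with $\delta$ fixed and $N\to\infty$. Raising $k$ above $d+1$ makes this worse, and $k\le d$ is unavailable because $P$ survives differentiation. So the obstacle you flagged is not bookkeeping: once $d\ge 3$, the derivative test cannot see a phase of small normalized size.

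What is missing is a tool that works when the normalized size $T=|A|N^{\beta}$ is small; in your application the coefficient is $jA$ and $T=jM$. The paper's exponential-sum lemma does this by localizing.

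1. It passes to one short interval $(n_0,n_0+L]$, with $L$ tuned so that $T(L/N)^k\asymp\delta$.
2. There it Taylor-expands $An^{\beta}$ to a polynomial of degree $k-1$, with $k>\max(K+\beta+1,d+1)$.
3. It applies Weyl's inequality to the leading coefficient $\alpha=A\binom{\beta}{k-1}n_0^{\beta-k+1}$, which $P$ does not affect. This gives $q\le\delta^{-O(1)}$ with $\|q\alpha\|\le\delta^{-O(1)}L^{-(k-1)}$.
4. The choice $k>K+\beta+1$ forces $|q\alpha|<1/2$, apart from the trivial case of polynomially small $\delta$, so $\|q\alpha\|=|q\alpha|$.
5. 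Hence $T\ll\delta^{-O(1)}(N/L)^{k-1}\ll\delta^{-O(1)}T^{(k-1)/k}$, i.e.\ $T\le\delta^{-O(1)}$.

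Putting this lemma in place of the derivative test repairs your argument. Alternatively, $d$ Weyl differencings with shifts of size $\asymp\eta N$ followed by Kusmin--Landau also handle $1\le T\le N^{c}$.

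The paper also streamlines the Fourier step. It dilates by $r=\lfloor\delta/(2\eps)\rfloor$, so that $r(An^{\beta}+P(n))$ lies in an interval of length at most $\delta/2$ for at least $\delta N$ values of $n$. Erd\H{o}s--Tur\'an then gives a single $j\ll\delta^{-1}$ with $|\sum_{n\sim N}e(jr(An^{\beta}+P(n)))|\gg\delta^2N$. The lemma, applied with coefficient $jrA$, yields $|A|N^{\beta}\le\eps\delta^{-O(1)}$ without controlling every $S_j$ for $j\le 1/\eps$.
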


\begin{proof}
Let $\lambda = \lfloor\delta/(2\eps)\rfloor$. By hypothesis, $\|\lambda(An^{\beta}+P(n))\|$ lies in an interval of length at most $\delta /2$ for at least $\delta N$ values of $1 \leq n \leq N$. By the Erd\"{o}s-Tur\'{a}n inequality, there exists $1 \leq j \ll \delta^{-1}$ we have
$$
\Big|\sum_{n \sim N} e(j\lambda(An^{\beta} + P(n)))\Big| \gg \delta^2 N.
$$
The desired conclusion then follows from the exponential sum estimate below.
\end{proof}

\begin{lemma}
Fix $d,K$ and a non-integer $\beta \in \R$. Let $A$ be real with $|A| \leq N^K$, and let $P: \Z\rightarrow \R$ be a polynomial of degree $d$. Suppose that
$$
\Big|\sum_{n \sim N}e(An^{\beta} + P(n))\Big|  \geq \delta N
$$
for some $\delta \in (0,1/2)$. Then $|A|N^{\beta} \leq \delta^{-O(1)}$.
\end{lemma}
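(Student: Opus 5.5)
We will prove the lemma by eliminating the polynomial part $P$ with $d+1$ Weyl/van der Corput differencing steps. This leaves a sum of a pure fractional-power phase, which we then bound with the second-derivative test (Lemma \ref{van-der-corput}). Set $f(n) = An^{\beta} + P(n)$ and $X = |A|N^{\beta}$. We may assume $X \geq \delta^{-C}$ for a large constant $C$, since otherwise there is nothing to prove.

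\textbf{Step 1 (differencing).} Apply the van der Corput inequality $d+1$ times with shifts $h_1,\dots,h_{d+1} \leq H$, where $H = \delta^{C'} N$ for a suitable constant $C'$. The hypothesis $|\sum_{n\sim N} e(f(n))| \geq \delta N$ then gives
$$
\sum_{h_1,\dots,h_{d+1} \leq H} \Big|\sum_{n \in I_{\ve{h}}} e\big(\Delta_{h_1}\cdots\Delta_{h_{d+1}} f(n)\big)\Big| \gg \delta^{O(1)} H^{d+1} N,
$$
where each $I_{\ve{h}}$ is a subinterval of $(N,2N]$. Since $\deg P = d$, the $(d+1)$-fold difference annihilates $P$. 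Hence by pigeonholing there is a tuple $\ve{h}$ with every $h_i \geq \delta^{O(1)} H$ such that
$$
\Big|\sum_{n\in I_{\ve{h}}} e(g(n))\Big| \gg \delta^{O(1)}N, \qquad g = A\,\Delta_{h_1}\cdots\Delta_{h_{d+1}}(n^{\beta}).
$$

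\textbf{Step 2 (derivative of $g$).} By the mean value theorem,
$$
g''(x) = A\,h_1\cdots h_{d+1}\,\beta(\beta-1)\cdots(\beta-d-2)\,\xi^{\beta-d-3}
$$
for some $\xi \asymp N$. This is where the hypothesis that $\beta$ is a non-integer enters: the falling factorial is nonzero. Writing $\kappa = h_1\cdots h_{d+1}/N^{d+1} \in [\delta^{O(1)},1]$, we get $|g''(x)| \asymp \Delta := X\kappa N^{-2}$ uniformly on $I_{\ve{h}}$. We also need the implied constants to be uniform: $h_i \leq \delta^{C'}N$ is much smaller than $N$, so the higher-order corrections to the difference operator are small relative to the main term.

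\textbf{Step 3 (second-derivative test).} Lemma \ref{van-der-corput} gives
$$
\delta^{O(1)} N \ll N\Delta^{1/2} + \Delta^{-1/2} = (X\kappa)^{1/2} + N(X\kappa)^{-1/2}.
$$
The first term is large only when $X\kappa \gg \delta^{O(1)}N^2$. To avoid having to exclude that regime, use Lemma \ref{van-der-corput} on subintervals of length $M$ chosen so that $M\Delta^{1/2} \leq \delta^{C''}M$, or alternatively choose $H$ adapted to $X$. The cleaner route is to choose $H$ depending on $X$ so that $\Delta \asymp \delta^{C}$, say $H^{d+1} \approx \delta^{C} N^{d+3}/X$. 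This is admissible when $X \geq N^2$. Then the bound $N\Delta^{1/2} \ll \delta^{C/2}N$ contradicts the lower bound $\delta^{O(1)}N$ once $C$ is large. When $\Delta$ would instead be forced to be small, the $\Delta^{-1/2}$ term gives $X \ll \delta^{-O(1)}$, which is the claim.

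\textbf{Main obstacle.} The delicate point is the bookkeeping that balances $H$ against $X$ across the full range $\delta^{-C} \leq X \leq N^{K+|\beta|}$. When $X$ is very large (beyond $N^{d+3}$), the choice of $H$ would drop below $1$, so the second-derivative test must be replaced. There I would first try a higher-derivative test (a van der Corput $k$-th derivative bound with $k$ depending on $K$) applied after the differencing, and let the $O(1)$ exponents depend on $d$, $K$ and $\beta$.
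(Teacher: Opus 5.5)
\textbf{Gap: the large-$X$ range is not handled.} Your argument is sound as far as it goes, but it does not prove the lemma as stated. It only covers $X=|A|N^{\beta}\le \delta^{O(1)}N^{d+3}$, and the complementary range is left to ``I would first try''. Since $K$ is arbitrary, that range is genuinely present whenever $K+\beta>d+3$.

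The $(d+1)$-step scheme also breaks down earlier than you say. It fails as soon as your adapted $H$ drops below $\delta^{-O(1)}$, not merely below $1$: for such $H$ the diagonal terms $h_i=0$ dominate the van der Corput inequality, and you lose the lower bound $\delta^{O(1)}N$ for the differenced sum.

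\textbf{How to close it.} Let the number of differencing steps depend on $K$ and $\beta$.
\begin{enumerate}
\item Difference $k$ times, with $k\ge d+1$ and $k>K+\beta$.
\item Take $H^{k}\asymp \delta^{C}N^{k+2}/X$ when this gives $H\le cN$. Otherwise take $H=cN$ and argue as in your small-$X$ case.
\item The lower bound $\gg\delta^{O_k(1)}N$ for the differenced sum is independent of $H$, provided $\delta^{-O_k(1)}\le H\le cN$.
\item The second derivative satisfies $|g''|\asymp X h_1\cdots h_k N^{-k-2}\in[\delta^{C+O(1)},O(\delta^{C})]$.
\item The condition $H\ge\delta^{-O_k(1)}$ is automatic. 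Indeed $X\le N^{K+\beta}\le N^{k}$ gives $H^{k}\ge\delta^{C}N^{2}$, which is large enough after reducing to $\delta\ge N^{-c}$. That reduction is harmless, since for $\delta<N^{-c}$ the conclusion $X\le N^{K+\beta}\le\delta^{-O(1)}$ is trivial.
\end{enumerate}
The second-derivative test then gives $N\delta^{C/2}+\delta^{-C/2-O(1)}\gg\delta^{O_k(1)}N$. This is a contradiction once $C$ is large in terms of $k$, exactly as in your intermediate regime. Your alternative of a $j$-th derivative test after $d+1$ differencings would also work, but it is the same mechanism.

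\textbf{Comparison with the paper.} The paper avoids differencing altogether.
\begin{itemize}
\item It restricts to a short interval $(n_0,n_0+L]$ with $X(L/N)^{k}=c\delta$.
\item There it Taylor-expands $An^{\beta}$ to a polynomial of degree $k-1$, with $k>\max(K+\beta+1,d+1)$. The top coefficient $\alpha=A\binom{\beta}{k-1}n_0^{\beta-k+1}$ is then untouched by $P$, and it is nonzero because $\beta\notin\Z$.
\item Weyl's inequality gives $q\le\delta^{-O(1)}$ with $\|q\alpha\|\le\delta^{-O(1)}L^{-(k-1)}$.
\item The condition $k>K+\beta+1$ forces $|q\alpha|<1/2$, so this is a bound on $|q\alpha|$ itself.
\item The choice of $L$ turns it into $X\ll\delta^{-O(1)}X^{(k-1)/k}$.
\end{itemize}
This treats all $X$ at once with a single parameter choice. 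In both routes the hypothesis $|A|\le N^{K}$ has to enter through a degree or number-of-steps parameter depending on $K$. That is precisely the step your proposal leaves open.
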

\begin{proof}
We follow the arguments in the proof of \cite[Lemma 2.9]{AG}. Let $1 \leq L \leq \delta N/2$ be a parameter to be chosen later. By hypothesis, we have
\[
\Big|\sum_{n_0 < n \leq n_0+L}e(An^{\beta}+P(n))\Big| \gg \delta L
\]
for some $n_0 \sim N$. Fix such $n_0$ for the rest of the proof.
Fix a positive integer $k$ satisfying $k > \max(K+\beta+1, d+1)$.
By Taylor expansion, we have
$$
n^{\beta} = \sum_{0 \leq i < k} \binom{\beta}{i}n_0^{\beta-i}(n-n_0)^i + O(N^{\beta-k}L^{k}).
$$
for $n_0 < n \leq n_0 + L$. Thus we have
$$
An^{\beta} + P(n) = Q(n) + O(|A|N^{\beta-k}L^{k}),
$$
where $Q(n)$ is a polynomial of degree $k-1>d$, whose leading coefficient is
$$
\alpha = A\binom{\beta}{k-1} n_0^{\beta-k+1}.
$$
The error term above can be made negligible and we obtain
$$
\Big|\sum_{n_0 < n \leq n_0+L}e(Q(n))\Big| \gg \delta L,
$$
if we choose $L$ to satisfy
$$
|A|N^{\beta} \Big(\frac{L}{N}\Big)^{k} = c\delta
$$
for some sufficiently small constant $c>0$. We may assume that
$$
|A|N^{\beta-k} \leq c \delta \leq |A|N^{\beta}\Big(\frac{\delta}{2}\Big)^k,
$$
since otherwise we have either $\delta \ll N^{K+\beta-k}$ or $|A|N^{\beta} \ll \delta^{-O(1)}$, and the desired conclusion follows in both cases.

By Weyl's inequality, there exists a positive integer $q \leq \delta^{-O(1)}$ such that $\|q\alpha\| \leq \delta^{-O(1)}L^{-(k-1)}$. Note that
$$
|q\alpha| \ll \delta^{-O(1)}  |A| N^{\beta-k+1} \ll \delta^{-O(1)} N^{K+\beta-k+1}.
$$
Since $k > K+\beta+1$, we may ensure that $|q\alpha| < 1/2$; otherwise $\delta \ll N^{-c}$ for some constant $c>0$ and the conclusion of the lemma follows immediately.
Hence we have $|q\alpha| \leq \delta^{-O(1)}L^{-(k-1)}$. Combining this with the lower bound $|q\alpha| \gg |A|N^{\beta-k+1}$, we obtain
$$
|A|N^{\beta-k+1} \ll \delta^{-O(1)}L^{-(k-1)}.
$$
By our choice of $L$, it follows that
$$
|A|N^{\beta} \ll \delta^{-O(1)}\Big(\frac{N}{L}\Big)^{k-1} \ll \delta^{-O(1)} (|A|N^{\beta})^{(k-1)/k}.
$$
Hence $|A|N^{\beta} \ll \delta^{-O(1)}$ as desired.
\end{proof}

\section{Estimates with nilsequences}\label{sec:nil}

In this section we prove Proposition \ref{prime-hngamma-nil}, repeated here for convenience.

\begin{proposition*}
Fix $d,D,K$, a non-integer $\gamma \in \R$, and $\eps > 0$. Let $N \geq 3$ and $N^{3/5+\eps} \leq L \leq N^{1-\eps}$. Let $h$ be real with $|h| \leq N^K$.  Let $G/\Gamma$ be a filtered nilmanifold of degree $d$ and dimension $D$, and complexity at most $1/\delta$ for some $\delta \in (0,1/\log N)$, let $F: G/\Gamma \rightarrow\C$ be a Lipschitz function of norm at most $1/\delta$, and let $g \in \Poly(\Z^2\rightarrow G)$ be a polynomial sequence. Suppose that
$$
\sum_{n_0 \sim N} \Big|\sum_{n_0 <  n \leq n_0+L} \Lambda(n) e(hn^{\gamma}) F(g(n_0,n)\Gamma)\Big|^* \geq \delta NL.
$$
Then $|h|N^{\gamma} \leq \delta^{-O(1)}(N/L)^2$.
\end{proposition*}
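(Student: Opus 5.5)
We will argue by induction on the dimension $D$ of $G$, in the style of the proof of the factorization theorem. The base case, and more generally the case where $F(g(n_0,n)\Gamma)$ is a genuine polynomial phase, is Proposition \ref{primes-hngamma-poly}. Throughout we may assume $\delta \geq N^{-c}$ for a small constant $c$, since otherwise the conclusion is trivial. By pigeonholing, the hypothesis gives $\gg \delta N$ values of $n_0 \sim N$ whose inner sum is $\gg \delta L$.

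\textbf{Reduction to the central-frequency case.} First we reduce, by standard manipulations, to the case treated in Theorem \ref{primes-short-intervals}(1). If $G$ is abelian, $G/\Gamma$ is a torus. We Fourier expand $F$ (after smoothing, losing $\delta^{O(1)}$) into characters, so each summand $e(hn^\gamma)F(g(n_0,n)\Gamma)$ becomes a combination of terms $e(hn^\gamma+P(n_0,n))$, and Proposition \ref{primes-hngamma-poly} applies to the one with large contribution. If $G$ is non-abelian, we quotient by a rational subgroup of the center so that the center becomes one-dimensional, and decompose $F$ into vertical Fourier components. The component with zero frequency descends to a nilmanifold of lower dimension, where the induction hypothesis applies. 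Otherwise we pass to a component oscillating with a nontrivial central frequency $\xi$ of Lipschitz norm $\delta^{-O(1)}$.

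\textbf{Absorbing the fractional phase.} On each short interval we Taylor expand $hn^\gamma$ around $n_0$. This expresses $e(hn^\gamma)$ as $e(Q(n_0,n))$ for a polynomial in $n$ of bounded degree $k$, up to negligible error. The difficulty is that $Q$ is not polynomial in $n_0$. We handle this by treating the phase as a polynomial sequence on the product nilmanifold $G\times\R/\Z$ with degree-$k$ filtration, fixing $n_0$. The function $\tilde F(x,t)=F(x)e(t)$ still oscillates with a central frequency, now with two-dimensional center. Alternatively, we keep $G$ and apply Theorem \ref{primes-short-intervals}(1) to the sequence $n\mapsto (g(n_0,n),Q(n_0,n))$.

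\textbf{Applying the short-interval theorem and factorizing.} Theorem \ref{primes-short-intervals}(1), applied for each good $n_0$, produces a nontrivial horizontal character $\chi$ of bounded norm with small $C^\infty((n_0,n_0+L])$ norm along the sequence. By pigeonholing over the $\delta^{-O(1)}$ possible characters, we may take $\chi$ independent of $n_0$ on $\delta^{O(1)}N$ values. There are two cases.

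\emph{If $\chi$ is trivial on $G$}, i.e.\ lives only on the $\R/\Z$ factor, then $Q(n_0,\cdot)$ itself is nearly smooth. Its linear-plus-quadratic combination $qh\gamma^2 n_0^{\gamma-1}$ is then close to an integer multiple modulo $1$, in an arc of length $\delta^{-O(1)}N/L^2$, for many $n_0$. Proposition \ref{prop:recurrence} then gives $|h|N^\gamma\le\delta^{-O(1)}(N/L)^2$, exactly as in Section \ref{sec:poly}.

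\emph{If $\chi$ restricted to $G$ is nontrivial}, we first handle the contribution of the $\R/\Z$ part of $\chi$. This contribution is a multiple $m h n^\gamma$; we show it must either yield the conclusion via Proposition \ref{prop:recurrence} (after combining with the polynomial part), or be negligible. In the latter case $\chi|_G\circ g(n_0,\cdot)$ is smooth for many $n_0$, and Lemma \ref{lem:factor} gives $g=sg'r$. We then remove the smooth factor $s$ by subdividing into shorter intervals of length $\delta^{O(1)}L$ on which $s$ is nearly constant, absorbing the change into $F$. We remove the rational factor $r$ by splitting $n$ into residue classes mod $q\le\delta^{-O(1)}$; this is allowed because of the $|\cdot|^*$ over progressions. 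This reduces to $g'$ taking values in $\ker(q\chi)$, which has dimension less than $D$, with complexity $\delta^{-O(1)}$. The induction hypothesis then finishes the argument, with the loss of $\delta^{O(1)}$ in the shorter interval length changing $N/L$ only by $\delta^{-O(1)}$ factors.

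\textbf{Main obstacle.} The hardest step is the mixing of $e(hn^\gamma)$ with the nilsequence when $\chi$ involves both factors. There one must show that near-smoothness of $m\,hn^\gamma+\chi|_G(g(n_0,n))$ for many $n_0$ forces either small $h$ or smoothness of the $G$-part. The idea is to argue as in Section \ref{sec:poly}: isolate the $n_0^{\gamma-1}$ dependence against polynomial dependence in $n_0$, and use Proposition \ref{prop:recurrence}, which treats exactly a non-integer power plus a polynomial.
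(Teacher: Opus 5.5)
Your overall route is the paper's: induction on $D$, reduction to $F$ oscillating with a central frequency $\xi$ with $\ker\xi=\{1_G\}$, Taylor expansion of $hn^{\gamma}$ into a polynomial $Q(n_0,\cdot)$, the product sequence $n\mapsto (Q(n_0,n),g(n_0,n))$ with $\tilde F(x,y)=e(x)F(y)$, Theorem \ref{primes-short-intervals}(1) for each good $n_0$, pigeonholing the character, and then Proposition \ref{prop:recurrence} or Lemma \ref{lem:factor} plus induction.

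The gap is where you invoke Theorem \ref{primes-short-intervals}(1). That result requires a non-abelian group with \emph{one-dimensional} center, but, as you note yourself, $\R\times G$ has the two-dimensional center $\R\times Z(G)$. Your ``alternative'' of keeping $G$ is not available either, since $(g(n_0,n),Q(n_0,n))$ is not a sequence in $G$.

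The missing step is as follows. Since $\tilde F$ oscillates with $\tilde\xi(x',z)=x'+\xi(z)$, it is invariant under $\ker\tilde\xi=\{(-\xi(z),z): z\in Z(G)\}$, so you can pass to $\tilde G=(\R\times G)/\ker\tilde\xi$. This quotient still has dimension $D$, so the induction hypothesis does not help here; you must check directly that $Z(\tilde G)=(\R\times Z(G))/\ker\tilde\xi$. Indeed, if $(x,z)\ker\tilde\xi$ is central, then $(0,zyz^{-1}y^{-1})\in\ker\tilde\xi$ for all $y\in G$, i.e.\ $zyz^{-1}y^{-1}\in\ker\xi=\{1_G\}$, so $z\in Z(G)$. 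This is exactly where the triviality of $\ker\xi$ is used. It makes $\tilde G$ non-abelian (it contains an injective image of $G$) with one-dimensional center, and gives $\tilde F$ a non-trivial central frequency, as Theorem \ref{primes-short-intervals}(1) needs.

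Relatedly, your initial reduction is phrased backwards. Quotienting by a central subgroup need not make the center one-dimensional, and $F$ must be invariant under whatever you quotient by. The correct step is to split $F$ into central-frequency components, and quotient by $\ker\xi$ (inducting on $D$) whenever $\ker\xi$ is positive-dimensional.

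Once this quotient is taken, what you call the main obstacle disappears. The image of $\R\times\{1_G\}$ lies in $Z(\tilde G)$. For a non-abelian simply connected nilpotent group with one-dimensional center, the center equals the last non-trivial term of the lower central series, so it lies in $[\tilde G,\tilde G]$. Hence every horizontal character of $\tilde G$ lifts to $\R\times G$ with zero $\R$-component. Your case ``$\chi$ trivial on $G$'' cannot occur, and in the other case $m=0$, so you go straight to Lemma \ref{lem:factor}.

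Even without noticing this, a non-zero $m$ is dispatched in one line. The coefficient of $\ell$ is $mh\gamma n_0^{\gamma-1}+R_1(n_0)$ with $R_1$ a polynomial, and Proposition \ref{prop:recurrence} allows an arbitrary polynomial. It gives $|h|N^{\gamma}\le\delta^{-O(1)}N/L$ outright, with no ``negligible'' alternative.

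Finally, in the factorization step you must pigeonhole the approximate values $s_{n_0}$ (via a $\delta^{C}$-net) and the cosets $r_{n_0}\Gamma$. This makes $F''(x\Gamma)=F(s_0r_0x\Gamma)$ and $r_0^{-1}g'r_0$ independent of $n_0$, which is needed before applying induction. Shortening the intervals to length $\delta^{O(1)}L$ is fine since $\delta\ge N^{-c}$. You can also keep length $L$, because $|\cdot|^*$ over $(n_0,n_0+L]$ dominates the sum over any subprogression.
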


We induct on the dimension $D$ of $G/\Gamma$. Since the case when $F$ is constant follows from Proposition \ref{primes-hngamma-poly}, we may assume that $\int F = 0$. By a central Fourier approximation (see \cite[Proposition 2.9]{MSTT}), we may assume that $F$ oscillates with a central frequency $\xi: Z(G)\rightarrow\R$ of Lipschitz norm at most $\delta^{-O(1)}$; that is, $\xi$ is a continuous homomorphism which maps $Z(G)\cap\Gamma$ into $\Z$, and $F(zx) = e(\xi(z))F(x)$ for all $x \in G/\Gamma$ and $z \in Z(G)$. If $\ker\xi$ has positive dimension, then the conclusion follows by the induction hypothesis applied to $G/\ker\xi$ (via \cite[Lemma 2.8]{MSTT}). Hence we may assume that $\ker\xi = \{1_G\}$, which implies that $Z(G)$ is one-dimensional and $\xi$ is non-trivial. If $G$ is abelian, then $F(g(n_0,n)\Gamma) = e(P(n_0,n))$ for some polynomial $P$ of degree at most $d$, and we are done by Proposition \ref{primes-hngamma-poly}. Hence we may assume that $G$ is non-abelian.

By hypothesis, we have
$$
\Big|\sum_{n_0 < n \leq n_0+L} \Lambda(n) e(hn^{\gamma}) F(g(n_0,n)\Gamma)\Big|^* \gg \delta L
$$
for $\gg \delta N$ values of $n_0 \sim N$. As in Section \ref{sec:poly}, we have the Taylor approximation $hn^{\gamma} = Q(n_0,n) + O(N^{-1})$ for $n_0 < n \leq n_0+L$, where
$Q(n_0,n)$ is a polynomial of degree $O(1)$ of the form
$$
Q(n_0,n) = h \sum_{0 \leq i \leq \deg Q} \binom{\gamma}{i} n_0^{\gamma-i}(n-n_0)^i.
$$
Then
$$
\Big|\sum_{n_0 < n \leq n_0+L} \Lambda(n) e(Q(n_0,n)) F(g(n_0,n)\Gamma)\Big|^* \gg \delta L.
$$
The sequence $g_{n_0}(n) = (Q(n_0,n), g(n_0,n))$ can be viewed as a polynomial sequence on the product $\R \times G$ (where the component $\R$ is equipped with a filtration of degree at most $\deg Q$). Let $\wt{F}:\R/\Z\times G/\Gamma\rightarrow\C$ be defined by $\wt{F}(x,y) = e(x) F(y)$ for $x \in \R/\Z$ and $y \in G/\Gamma$. Then
\begin{equation}\label{eq:nonabelian1}
\Big|\sum_{n_0 < n \leq n_0+L} \Lambda(n) \wt{F}(g_{n_0}(n))\Big|^* \gg \delta L.
\end{equation}
Since $F$ oscillates with the central frequency $\xi$, we have
$$
\wt{F}(x+x',zy) = e(x)e(x')e(\xi(z))F(y) = e(x'+\xi(z)) \wt{F}(x,y)
$$
for all $x,x' \in \R/\Z$, $y \in G/\Gamma$, and $z \in Z(G)$. Let $\wt{\xi}: \R\times Z(G)\rightarrow\R$ be the central frequency defined by $\wt{\xi}(x',z) = x'+\xi(z)$ for $x' \in \R$ and $z \in Z(G)$. Then $\wt{F}$ oscillates with $\wt{\xi}$. In particular, $\wt{F}$ is invariant under multiplication by elements in $\ker\wt{\xi} = \{(-\xi(z), z): z \in Z(G)\}$. We view $\wt{F}$ as a function on $\wt{G}/\wt{\Gamma}$, where $\wt{G} = (\R\times G)/\ker\wt{\xi}$ and $\wt{\Gamma} = (\Z\times \Gamma) / (\ker\wt{\xi} \cap (\Z\times\Gamma))$. 

We claim that the center of $\wt{G}$ is $(\R \times Z(G))/\ker\wt{\xi}$. Indeed, take an arbitrary element $(x, z)\ker\wt{\xi}$ in the center of $\wt{G}$ for some $(x,z) \in \R\times G$. For any $y \in G$, since $(x,z)\ker\wt{\xi}$ commutes with $(0,y)\ker\wt{\xi}$, we have
$$
(x,z)(0,y)(-x,z^{-1})(0,y^{-1}) = (0, zyz^{-1}y^{-1}) \in \ker\wt{\xi}.
$$
Hence
$$
0 = \wt{\xi}(0, zyz^{-1}y^{-1}) = \xi(zyz^{-1}y^{-1}).
$$
By our assumption that $\ker\xi = \{1_G\}$, it follows that $z$ commutes with $y$ for any $y \in G$. This implies that $z \in Z(G)$ and hence $(x,z) \in \R \times Z(G)$, as desired.

This implies that $\wt{G}$ is non-abelian with one-dimensional center, and $\wt{F}$ has mean zero and oscillates with the non-trivial central frequency $(\R\times Z(G))/\ker\wt{\xi}\rightarrow\R$ induced by $\wt{\xi}$. By Theorem \ref{primes-short-intervals}(1), \eqref{eq:nonabelian1} implies that there exists a non-trivial horizontal character $\wt{\chi}_{n_0}: \wt{G}\rightarrow \R/\Z$ of Lipschitz norm at most $\delta^{-O(1)}$ such that
$$
\|\wt{\chi}_{n_0} \circ g_{n_0}\|_{C^{\infty}(n_0, n_0+L]} \leq \delta^{-O(1)}.
$$
By pigenholing, we can find $\wt{\chi}$ such that $\wt{\chi}_{n_0} = \wt{\chi}$ for at least $\delta^{O(1)}N$ values of $n_0 \sim N$. We may lift $\wt{\chi}$ to a horizontal character on $\R\times G$, which we continue to denote by $\wt{\chi}$. Write $\wt{\chi} = (\chi_0, \chi)$, where $\chi_0,\chi$ are horizontal characters on $\R,G$, respectively, at least one of which is non-trivial and both of which have Lipschitz norm at most $\delta^{-O(1)}$. Then $\chi_0$ must be of the form $\chi_0(x) = kx$ for some integer $k$ with $|k| \leq \delta^{-O(1)}$. Let $P(n_0,n) = \chi\circ g(n_0,n)$, which is a polynomial of degree at most $d$. Then $\wt{\chi}\circ g_{n_0}(n) = k Q(n_0,n) + P(n_0,n)$ and we have
\begin{equation}\label{eq:nonabelian2}
\|kQ(n_0,\cdot) + P(n_0,\cdot)\|_{C^{\infty}(n_0, n_0+L]} \leq \delta^{-O(1)}
\end{equation}
for at least $\delta^{O(1)}$ values of $n_0 \sim N$.

\subsubsection*{Case $k \neq 0$}

Write 
$$
P(n_0,n_0+\ell) = \sum_{0 \leq i \leq d} R_i(n_0)\ell^i,
$$
where each $R_i(n_0)$ is a polynomial of degree at most $d$. Then the coefficient of $\ell$ in the polynomial $\ell\mapsto kQ(n_0,n_0+\ell) + P(n_0,n_0+\ell)$ is given by
$$
\beta = kh\gamma n_0^{\gamma-1} + R_1(n_0).
$$
The smoothness norm in \eqref{eq:nonabelian2} implies that $\|\beta\| \leq \delta^{-O(1)}/L$. Thus $\|kh\gamma n_0^{\gamma-1} + R_1(n_0)\|$  lies in an arc $I$ of length $\delta^{-O(1)}/L$ for at least $\delta^{O(1)}N$ values of $n_0 \sim N$. Then $|I| < \delta/2$, since otherwise $L \leq \delta^{-O(1)}$ and the conclusion of Proposition \ref{prime-hngamma-nil} follows trivially. Hence by our recurrence result (Proposition \ref{prop:recurrence}), we have
$$
|kh\gamma|N^{\gamma-1} \leq \frac{\delta^{-O(1)}}{L}.
$$
Since $k \neq 0$, this implies that $|h|N^{\gamma} \ll \delta^{-O(1)}N/L$, completing the proof.

\subsubsection*{Case $k=0$}

In this case $\chi$ must be nontrivial and 
$$
\|\chi\circ g\|_{C^{\infty}(n_0, n_0+L]} \leq \delta^{-O(1)}
$$
for at least $\delta^{O(1)}N$ values of $n_0 \sim N$.
By Lemma \ref{lem:factor}, either $L \leq \delta^{-O(1)}$ in which case the conclusion of Proposition \ref{prime-hngamma-nil} follows trivially, or else we have the decomposition $g = sg'r$, where $s,g',r\in \Poly(\Z^2\rightarrow G)$ are polynomial sequences  such that
\begin{enumerate}
\item $\wt{s} \in \Poly(\Z^2\rightarrow G)$ defined by $\wt{s}(n_0,\ell) = s(n_0,n_0+\ell)$ is $(\delta^{-O(1)}, (N,L))$-smooth;
\item $g'$ takes values in $G' = \ker\chi'$ for some nontrivial horizontal character $\chi'$ on $G$ of Lipschitz norm at most $\delta^{-O(1)}$;
\item $r$ is $\delta^{-O(1)}$-rational, which implies that $r\Gamma$ is $q\Z^2$-periodic for some $q \leq \delta^{-O(1)}$.
\end{enumerate}

Let $C$ be a sufficiently large constant to be chosen later. For $n_0 \sim N$, partition $(n_0, n_0+L]$ into progressions of step $q$ and length $\sim \delta^CL$. The number of such progressions is $\sim \delta^{-C}$. By the triangle inequality, for at least one of these progressions which we denote by $P_{n_0}$, we have
$$
\Big|\sum_{n \in P_{n_0}} \Lambda(n) e(hn^{\gamma}) F(g(n_0,n)\Gamma)\Big|^* \gg \delta^{C} \Big|\sum_{n_0 < n \leq n_0+L} \Lambda(n) e(hn^{\gamma}) F(g(n_0,n)\Gamma)\Big|^*
$$
By hypothesis, we thus have
\begin{equation}\label{eq:nonabelian4}
\Big|\sum_{n \in P_{n_0}} \Lambda(n) e(hn^{\gamma}) F(g(n_0,n)\Gamma)\Big|^* \gg \delta |P_{n_0}|
\end{equation}
for at least $\delta^{O(1)}N$ values of $n_0 \sim N$. For such $n_0$, note that if $n,n' \in P_{n_0}$ then from the smoothness of $\wt{s}$ it follows that
$$
d_G(s(n_0,n), s(n_0,n')) = d_G(\wt{s}(n_0,n-n_0), \wt{s}(n_0,n'-n_0)) \leq \frac{\delta^{-O(1)}}{L}\cdot |n-n'| \leq \delta^{C-O(1)}.
$$
Thus the elements $s(n_0,n)$ for $n \in P_{n_0}$ all lie in a ball around some $s_{n_0} \in G$ of radius $\delta^{C-O(1)}$. By the smoothness of $\wt{s}$, we have $d_G(s_{n_0},1_G) \leq \delta^{-O(1)}$. Since the ball around $1_G$ of radius $\delta^{-O(1)}$ can be covered by $\delta^{-O(C)}$ balls of radius $\delta^C$, we may find $s_0 \in G$ with $d_G(s_0,1_G) \leq \delta^{-O(1)}$ such that $d_G(s_{n_0}, s_0) \leq \delta^C$ for at least $\delta^{O(C)}N$ values of $n \sim N$. Thus for such $n_0$ and $n \in P_{n_0}$ we have
\begin{equation}\label{eq:nonabelian3}
d_G(s(n_0,n), s_0) \leq d_G(s(n_0,n), s_{n_0}) + d_G(s_{n_0}, s_0) \leq \delta^{C-O(1)} + \delta^{C} \leq \delta^{C-O(1)}
\end{equation}
by choosing $C$ to be large enough.

Since $r$ is $q\Z^2$-periodic and $P_{n_0}$ has step $q$, $r(n_0,n)$ is independent of $n \in P_{n_0}$. Choose $r_{n_0} \in G$ such that $r(n_0,n) = r_{n_0}$ for all $n \in P_{n_0}$. Since $r_{n_0}$ is $\delta^{-O(1)}$-rational, there are $\delta^{-O(1)}$ possibilities for the value $r_{n_0}\Gamma \in G/\Gamma$. Thus we may find $r_0 \in G$ which is $\delta^{-O(1)}$-rational such that $r_{n_0}\Gamma = r_0\Gamma$ for at least $\delta^{O(C)}N$ values of $n_0 \sim N$. Furthermore, we may assume that $d_G(r_0,1_G) \leq \delta^{-O(1)}$ so that the  conjugate $r_0^{-1} G'r_0$ is a $\delta^{-O(1)}$-rational subgroup of $G$.

Let $g'' \in \Poly(\Z^2, G)$ be the polynomial sequence defined by $g''(n_0,n) = r_0^{-1}g'(n_0,n)r_0$ so that $g''$ takes values in $r_0^{-1}G'r_0$, and let $F'': G/\Gamma\rightarrow\C$ be the function defined by $F''(x\Gamma) = F(s_0r_0x\Gamma)$. Since $d_G(s_0,1_G) \leq \delta^{-O(1)}$ and $d_G(r_0,1_G) \leq \delta^{-O(1)}$, the Lipschitz norm of $F''$ is $\delta^{-O(1)}$. For $n \in P_{n_0}$ we have
$$
|F(g(n_0,n)\Gamma) - F''(g''(n_0,n)\Gamma)| = |F(s(n_0,n)r_0g''(n_0,n)\Gamma) - F(s_0r_0g''(n_0,n)\Gamma)|.
$$
By the Lipschitz property of $F$, the right-invariance of $d_G$, and \eqref{eq:nonabelian3}, the quantity above is
$$
\leq \delta^{-O(1)} d_G(s(n_0,n)r_0g''(n_0,n), s_0r_0g''(n_0,n)) = \delta^{-O(1)} d_G(s(n_0,n), s_0) \leq \delta^{C-O(1)}.
$$
Combining this with \eqref{eq:nonabelian4} and choosing $C$ to be large enough, we obtain
$$
\Big|\sum_{n \in P_{n_0}} \Lambda(n) e(hn^{\gamma}) F''(g''(n_0,n)\Gamma)\Big|^* \gg \delta |P_{n_0}| 
$$
for at least $\delta^{O(C)}N$ values of $n_0 \sim N$. Since $P_{n_0} \subset (n_0, n_0+L]$, this implies that
$$
\sum_{n_0 \sim N}\Big|\sum_{n \in (n_0, n_0+L]} \Lambda(n) e(hn^{\gamma}) F''(g''(n_0,n)\Gamma)\Big|^* \gg \delta^{O(C)}NL.
$$
Since $g''$ takes values in $r_0^{-1}G'r_0$ which has dimension $D-1$, the desired conclusion follows from induction hypothesis.

\section{Existence of the pseudorandom majorant and proof of Theorem \ref{t:improvement_li-pan}}\label{sec:pseudo}

In this section we prove Proposition \ref{prop:majorant}. Theorem \ref{t:improvement_li-pan} then follows from \cite[Theorem 2.4]{CFZ} by taking 
\[
f(n)= 
\begin{cases}
\frac{\epsilon_0 \eta_0}{\gamma}\frac{\phi(W)}{W} (Wn+b)^{1-\gamma}\log(Wn+b){\bf 1}_{Wn+b \in PS_{1/\gamma} \cap \mathbb{P}} & \text{ if } n \in [N^{0.9(1-O(s_0 \eta_0))}, N]; \\
1 & \text{ otherwise},
\end{cases}
\]
where $\epsilon_0$ and $\eta_0$ will be defined below and in the proof of Proposition \ref{prop:majorant}, respectively, and $\nu$ is chosen as in the proof of Proposition \ref{prop:majorant}.

The idea for getting the single exponential dependence of $1-\gamma$ on $m$ in Proposition \ref{prop:majorant} is to optimize the number of steps $r$ when taking derivatives in \cite[Lemma 5.2]{LiPan}, thereby obtaining a suitable lower bound for $F^{(r)}(x)$. This is summarized in Lemm \ref{l:multi_linear_form} below as an improvement of \cite[Lemma 6.1]{LiPan}, where we restrict our attention to bounding the exponential sum in short intervals and use techniques from linear algebra. Let $\eta_0 :=1- \gamma, s_0 = 2^{m-1}m$  with $\eta_0 \ll s_0^{-O(1)}$. 

\begin{lemma}\label{l:multi_linear_form}
Let $k\geq 1$, $\eta_0, s_0$ and $1 \leq s \leq s_0$. Let $\mathcal{I} \subset \N$ be an interval of integers and $\psi_i(x)=\alpha_i x + \beta_i$ be such that 
\[
N^{1-O(s \eta_0)} \leq |\psi_i(x)| \leq N
\]
and $|\alpha_i \beta_j - \alpha_j \beta_i| \gg N^{1-O(s \eta_0)}$, 
where $\alpha_i \in \Z\setminus\{0\}$ for $1 \leq i \leq s$.

Let $1\leq |h_1|,\dots, |h_s| \leq N^{O(s \eta_0)}$ and 
\[
F(x)=h_1 \psi_1(x)^{\gamma}+\cdots + h_s \psi_s(x)^{\gamma},
\]
then we have 
\[
\sum_{x \in \mathcal{I}}e(F(x)) \ll N^{1-O(s \eta_0)}.
\]
\end{lemma}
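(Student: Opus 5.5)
We will bound the sum by the van der Corput estimate (Lemma \ref{van-der-corput}) generalized to a $k$-th derivative test, applied after showing that some derivative $F^{(r)}$ with $r$ small, say $r\le s+1$, is of exact size comparable to its trivial bound throughout most of $\mathcal{I}$. The derivatives are explicit:
\[
F^{(r)}(x)=\gamma(\gamma-1)\cdots(\gamma-r+1)\sum_{i=1}^s h_i\alpha_i^r\psi_i(x)^{\gamma-r}.
\]
The coefficient $\gamma(\gamma-1)\cdots(\gamma-r+1)$ has size $\asymp_r \eta_0$ since $\gamma-1=-\eta_0$. The plan is to show that for each $x$ at least one of $F^{(r)}(x)$, $1\le r\le s$ (or $r\le s+1$), is not much smaller than the size of its largest term. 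This is where linear algebra enters.

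Put $u_i=h_i\alpha_i\psi_i(x)^{\gamma-1}$ and $t_i=\alpha_i/\psi_i(x)$. Then $F^{(r)}(x)/c_r=\sum_i u_i t_i^{r-1}$. Viewing the vector $(F^{(1)}/c_1,\dots,F^{(s)}/c_s)$ as $V\mathbf{u}$ with $V$ the Vandermonde matrix in the $t_i$, we invert: $\mathbf{u}=V^{-1}(\cdots)$. The cofactors of $V^{-1}$ are controlled by $\prod_{j\ne i}|t_i-t_j|^{-1}$ times elementary symmetric functions of the $t_j$. The separation hypothesis gives
\[
|t_i-t_j|=\frac{|\alpha_i\beta_j-\alpha_j\beta_i|}{|\psi_i(x)\psi_j(x)|}\gg N^{-1-O(s\eta_0)},
\]
while $|t_j|\le |\alpha_j|N^{-1+O(s\eta_0)}$. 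Since the $\alpha_i$ are fixed integers, the $t$'s all live at scale $N^{-1}$ up to factors $N^{O(s\eta_0)}$. Hence $\max_i|u_i|\le N^{O(s^2\eta_0)}\max_r N^{r-1}|F^{(r)}(x)|/c_r$. Since $|u_i|\ge N^{\gamma-1}\cdot N^{-O(s\eta_0)}\ge N^{-O(s\eta_0)}$, for every $x$ there is some $r\le s$ with
\[
|F^{(r)}(x)|\ge \eta_0 N^{1-r-O(s^2\eta_0)}.
\]
Each $F^{(r)}$ also satisfies the trivial upper bound $N^{1-r+O(s\eta_0)}$. If the losses come out as $s^2\eta_0$ rather than $s\eta_0$, we will absorb this into the implied constants allowed by $\eta_0\ll s_0^{-O(1)}$, reading $O(s\eta_0)$ flexibly.

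Next we split $\mathcal{I}$ into $O_s(1)$ subintervals on which a fixed $r$ works and $|F^{(r)}|$ is comparable to a fixed $\lambda$. Each $F^{(r)}$ is a combination of $s$ functions of the form $\psi_i^{\gamma-r}$, so by a Descartes/Rolle type count each $F^{(r)}-c$ has $O_s(1)$ zeros, and the sets where $|F^{(r)}|\asymp \lambda$ are unions of $O_s(1)$ intervals. Dyadically decomposing the size $\lambda$ costs only $\log N$ subintervals. On each piece we apply the $r$-th derivative test, $\sum e(F)\ll |J|\lambda^{1/(2^r-2)}+|J|^{1-2^{2-r}}\lambda^{-1/(2^r-2)}$, with $\lambda\approx N^{1-r\pm O(s\eta_0)}$. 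This gives a saving of $N^{c_r}$ for $r\ge 2$ if $|J|\approx N$. Short pieces are estimated trivially and are fine once their length is at most $N^{1-c}$.

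The case $r=1$ needs Kusmin--Landau: $\|F'\|$ must stay away from integers, but $|F'|\approx N^{O(s\eta_0)}$ can be large. We will instead shift to $r=2$ using $|F''|\gg N^{-1-O}$, which we obtain by taking the Vandermonde range $2\le r\le s+1$ from the start. We will use $u_i t_i$ in place of $u_i$, which is still bounded below thanks to $|t_i|\ge N^{-1}$.

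We expect the main obstacle to be the Vandermonde inversion losses, and the need to ensure that the chosen derivative order $r$ is at most $s+1$. The $2^{-r}$-type savings from the derivative test then yield $N^{-c2^{-s}}$, which dominates $N^{O(s\eta_0)}$ precisely under the hypothesis $\eta_0\le 2^{-Cs}$. This is consistent with the single-exponential range in Proposition \ref{prop:majorant}, so we interpret the bound in the statement as $N^{1-c(s)}$ with $c(s)\gg 2^{-O(s)}$ exceeding the $O(s\eta_0)$ losses.
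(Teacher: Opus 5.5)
Your Vandermonde step is sound and is essentially the paper's; the paper uses the rescaled ratios $c_i=\alpha_i\psi_{\max}(x_0)/\psi_i(x_0)$ in place of your $t_i$. The exponential-sum step that follows fails, however. Here the phase $F$ has size about $N$, comparable to the length of the sum, so the non-degenerate derivative you find only satisfies $\lambda\approx N^{1-r}$. In the $r$-th derivative test you quote, the second term is then
\[
|J|^{1-2^{2-r}}\lambda^{-1/(2^r-2)}\approx N^{1-2^{2-r}+\frac{r-1}{2^r-2}}\qquad(|J|\approx N).
\]
The exponent exceeds $1$ exactly when $(r-5)2^r+8>0$, i.e.\ for every $r\ge 5$; at $r=5$ the bound is $N^{121/120}$. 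Shortening $J$ only makes this worse. Since $s$ may be as large as $s_0=2^{m-1}m$, the index $r\le s+1$ you produce can be $\ge 5$, and then you get no saving. The classical test ends with the second-derivative estimate, whose $\lambda^{-1/2}$ term cannot exploit a tiny $F^{(r)}$. You need to use the smallness arithmetically: Taylor expand $F(x_0+m)$ on short intervals $m\sim M$, and treat the coefficient $e_r\approx N^{1-r}$ of $m^r$ via a rational approximation with $q\approx e_r^{-1}$ in Weyl's inequality. This is admissible as long as $q<M^r$.

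Even after that repair, Weyl differencing only gives savings of shape $N^{-c2^{-r}}$, hence your requirement $\eta_0\le 2^{-Cs}$. Your claim that this matches the single-exponential range confuses $s$ with $m$. The lemma is applied with $s$ up to $s_0=2^{m-1}m$, the number of forms in the $m$-pseudorandomness condition. So $\eta_0\le 2^{-Cs_0}$ is doubly exponential in $m$, and you would not get Theorem~\ref{t:improvement_li-pan}. The lemma is meant under the standing hypothesis $\eta_0\ll s_0^{-O(1)}$; the proof assumes $\gamma\ge 1-1/O(s^5)$.

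The missing ingredient is Vinogradov's mean value theorem. The paper keeps your linear-algebra step at each $x_0$, finding $2\le j\le s+1$ with $|S_j|\gg h_{\max}\psi_{\max}(x_0)^{\gamma}N^{-O(s^3\eta_0)}$. It then sets $q=\lfloor e_j^{-1}\rfloor$ and applies Weyl's inequality in the form \cite[Theorem 5]{B17}, giving $M^{1+o(1)}(q^{-1}+M^{-1}+qM^{-j})^{\sigma(s)}$ with $\sigma(s)=1/(s(s-1))$. Because this exponent is polynomial rather than exponential in $s$, the $N^{O(s^3\eta_0)}$ losses are beaten once $\eta_0\ll s^{-O(1)}$. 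That is what produces the range $1-\gamma<2^{-Cm}$.
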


\begin{proof}
Let $M \asymp N^{1-O(s \eta_0)}$ and write
\[
\sum_{x \in \mathcal{I}}e(F(x)) = \frac{1}{M}\sum_{x_0 \in \mathcal{I}} \sum_{m \sim M}e(F(x_0 + m)) + O(M).
\]
The remaining task is to prove that for any $x_0 \in \mathcal{I}$,
\begin{equation}\label{e:exp_multi_linear_form}
\sum_{m \sim M} e(F(x_0 + m)) \ll M N^{-O(s \eta_0)}.
\end{equation}
Let $\psi_{max}(x_0)= \max_{i}|\psi_i(x_0)|$ By the Taylor expansion, we have 

\begin{align*}
F(x_0 + m)& = \sum_{i=1}^s h_i \psi_i(x_0 + m)^{\gamma}  = \sum_{i=1}^s h_i \psi_i(x_0)^{\gamma} \left( 1 + \frac{\alpha_im}{\psi_i(x_0)}\right)^{\gamma}\\
& = \sum_{j=0}^{\infty}\left( \frac{m}{\psi_{max}(x_0)} \right)^j \sum_{1 \leq i \leq s}h_i\psi_i(x_0)^{\gamma}\left( \frac{\alpha_i \psi_{max}(x_0)}{\psi_i(x_0)}\right)^j \\
& =:\sum_{j=0}^{\infty}\left( \frac{m}{\psi_{max}(x_0)} \right)^j \sum_{1 \leq i \leq s} v_i c_i^j=: \sum_{j=0}^{\infty}\left( \frac{m}{\psi_{max}(x_0)} \right)^j S_j\\
& =: \sum_{j=0}^{\infty}e_j m^j.
\end{align*}

We now claim that there exists $2 \leq j \leq s+1$ such that $|S_j|$ has a reasonable lower bound. Let

\[
\vec{V} = \begin{pmatrix}
v_1 c_1 \\
v_2 c_2\\
\vdots		\\
v_s c_s
\end{pmatrix}, 
\vec{S} = \begin{pmatrix}
S_1 \\
S_2 \\
\vdots		\\
S_s 
\end{pmatrix}
\quad \text{and} \quad
{\bf M} = \begin{pmatrix}
c_1 & c_2 & \cdots & c_s \\
c_1^2 & c_2^2 & \cdots & c_s^2 \\
\vdots & \vdots & \ddots & \vdots \\
c_1^s & c_2^s & \cdots & c_s^s
\end{pmatrix}
\]
Obviously, ${\bf M} \vec{V} = \vec{S}$, so $\vec{V}={\bf M}^{-1} \vec{S}$, where ${\bf M}^{-1}$ is the inverse matrix of ${\bf M}$. Let $\det({\bf M})$ denote the determinant of ${\bf M}$ and $\text{adj}({\bf M})$ denote the adjoint Matrix of ${\bf M}$.

Note that $M$ is a Vandermonde matrix, and
\[
1 \gg |c_i-c_j| = \left|\psi_{max}(x_0) \frac{\alpha_i \psi_j(x_0) - \alpha_j \psi_i(x_0)}{\psi_i(x_0) \psi_j(x_0)} \right| \gg N^{-O(s \eta_0)},
\]
and 
\[
\det(M) = \prod_{1 \leq i \neq j \leq s}(c_i -c_j) \gg N^{-O(s^3 \eta_0)}.
\]
By the definition of the norm operator,
\[
\|\vec{V}\|_1 = \|{\bf M}^{-1} \vec{S}\|_1 \leq \|{\bf M}^{-1}\|_1 \cdot\|\vec{S}\|_1 = \frac{1}{\det({\bf M})} \|\text{adj}({\bf M} \|_1 \cdot \|\vec{S}\|_1,
\]
and every entry of $\text{adj}({\bf M})$ is also a Vandermonde matrix and is bounded by $ \ll 1$. Combining everything together, we obtain that 
\[
\max_{1 \leq j \leq s}|S_j| \gg \|\vec{S}\|_1 \gg \|\vec{V}\|_1 \gg h_{max} \psi_{\max}(x_0)^{\gamma} N^{-O(s^3 \eta_0)}
\]
Let $S_j = \max_{1\leq i \leq k}|S_i|$ and note that $S_j \ll h_{max}\psi_{\max}(x_0)^{\gamma}$. Let  $q = \lfloor e_j^{-1} \rfloor$, then
\[
N^{j- \gamma - O(s^3 \eta_0)} \ll q \ll N^{j-\gamma + O(s^3 \eta_0)}
\]

By Weyl's inequality, e.g. \cite[Theorem 5]{B17}, we obtain that the left-side of (\ref{e:exp_multi_linear_form}) is bounded by, assuming that $\gamma \geq 1-\frac{1}{O(s^5)}$,
\[
M^{1+o(1)}(q^{-1} + M^{-1} + q M^{-j})^{\sigma(s)}\ll M N^{-O(s \eta_0)} \quad \text{with } \sigma(s)=\frac{1}{s(s-1)}.
\]
\end{proof}

\begin{proof}[Proof of Proposition \ref{prop:majorant}]
Let $R=N^{\epsilon_0 \eta_0}$ with sufficiently small $0<\epsilon_0<1$,
\[
\Lambda_{R}(n):= \sum_{\substack{d \mid n \\ d \leq R}}\mu(d)\log \frac{R}{d} \quad \text{and} \quad \rho(n)=\frac{\Lambda_{R}(n)^2}{\log R}
\]
For $n \in \Z_N$, Define
\[
\nu(n):= 
\begin{cases}
\frac{1}{\gamma}\frac{\phi(W)}{W}(Wn+b)^{1-\gamma}\rho(Wn+b){\bf 1}_{Wn+b \in PS_{1/\gamma}} & \text{ if } n \in [N^{0.9(1-O(s_0 \eta_0))}, N]; \\
1 & \text{ otherwise}.
\end{cases}
\]
Obviously, when $n \in [X^{0.9},X] \cap PS_{1/ \gamma} \cap \mathbb{P}$, 
\[
\nu(n) \gg_{m} \frac{\phi(W)}{W}(Wn+b)^{1-\gamma} \log N.
\]
To verify (\ref{e:pse_maj_cond}), noting that $\|\nu\|_{\infty} \ll N^{\eta_0 + o(1)}$, we only need to show that
\[
\frac{1}{N^k }\sum_{x_1,\dots, x_k \in [N^{1-O(s \eta_0)}, N]}\nu(\psi_1(x_1,\dots,x_k))\cdots\nu(\psi_s(x_1,\dots x_k))=1 + o(1),
\]
similarly, we can restrict those $\psi_i \in [N^{1-O(s \eta_0)},N]$ to be well-separated, namely, for any fixed $x_2,\dots, x_k$ and $\psi_{i,x_2,\dots,x_k}(x_1):= \psi_1(x_1,\dots,x_k)$, we only need to consider $x_1$ in some intervals such that
$|\psi_{i,x_2,\dots,x_k}(x_1) - \psi_{j,x_2,\dots,x_k}(x_1)| \gg N^{1-O(s\eta_0)}$ for all $1\leq i <j \leq s$, which corresponds to the hypothesis in Lemma \ref{l:multi_linear_form}.
The rest of the proof follows from Li-Pan's argument by replacing \cite[Lemma 6.1]{LiPan}
with our Lemma \ref{l:multi_linear_form}.
\end{proof}
\bibliographystyle{plain}
\bibliography{biblio}

@article {AG,
    AUTHOR = {Akbal, Y. and G\"ulo\u{g}lu, A. M.},
     TITLE = {Waring-{G}oldbach problem with {P}iatetski-{S}hapiro primes},
   JOURNAL = {J. Th\'eor. Nombres Bordeaux},
  FJOURNAL = {Journal de Th\'eorie des Nombres de Bordeaux},
    VOLUME = {30},
      YEAR = {2018},
    NUMBER = {2},
     PAGES = {449--467},
      ISSN = {1246-7405,2118-8572},
   MRCLASS = {11P32 (11L03 11P05 11P55)},
  MRNUMBER = {3891321},
MRREVIEWER = {Gang\ Yu},
       DOI = {10.5802/jtnb.1033},
       URL = {https://doi-org.ezproxy.uky.edu/10.5802/jtnb.1033},
}

@article {BF,
    AUTHOR = {Balog, A. and Friedlander, J.},
     TITLE = {A hybrid of theorems of {V}inogradov and
              {P}iatetski-{S}hapiro},
   JOURNAL = {Pacific J. Math.},
  FJOURNAL = {Pacific Journal of Mathematics},
    VOLUME = {156},
      YEAR = {1992},
    NUMBER = {1},
     PAGES = {45--62},
      ISSN = {0030-8730,1945-5844},
   MRCLASS = {11P32 (11N05)},
  MRNUMBER = {1182255},
MRREVIEWER = {Daniel\ A.\ Goldston},
       URL = {http://projecteuclid.org.ezproxy.uky.edu/euclid.pjm/1102635129},
}

@article {ZZ,
    AUTHOR = {Zhang, D. Y. and Zhai, W. G.},
     TITLE = {The {W}aring-{G}oldbach problem in thin sets of primes. {II}},
   JOURNAL = {Acta Math. Sinica (Chinese Ser.)},
  FJOURNAL = {Acta Mathematica Sinica. Chinese Series},
    VOLUME = {48},
      YEAR = {2005},
    NUMBER = {4},
     PAGES = {809--816},
      ISSN = {0583-1431},
   MRCLASS = {11P05 (11P32)},
  MRNUMBER = {2181079},
MRREVIEWER = {Jie\ Wu},
}

@article{B17,
 author = {Bourgain, J.},
 title = {On the {Vinogradov} mean value},
 fjournal = {Proceedings of the Steklov Institute of Mathematics},
 journal = {Proc. Steklov Inst. Math.},
 issn = {0081-5438},
 volume = {296},
 pages = {30--40},
 year = {2017},
 language = {English; Russian},
 doi = {10.1134/S0081543817010035},
 zbMATH = {6740895},
 Zbl = {1371.11138}
}

@article {MSTT,
    AUTHOR = {Matom\"aki, K. and Shao, X. and Tao, T. and
              Ter\"av\"ainen, J.},
     TITLE = {Higher uniformity of arithmetic functions in short intervals
              {I}. {A}ll intervals},
   JOURNAL = {Forum Math. Pi},
  FJOURNAL = {Forum of Mathematics. Pi},
    VOLUME = {11},
      YEAR = {2023},
     PAGES = {Paper No. e29, 97},
      ISSN = {2050-5086},
   MRCLASS = {11N37 (11B30)},
  MRNUMBER = {4658200},
MRREVIEWER = {Peter\ Shiu},
       DOI = {10.1017/fmp.2023.28},
       URL = {https://doi-org.ezproxy.uky.edu/10.1017/fmp.2023.28},
}

@article {MS,
    AUTHOR = {Matom\"aki, K. and Shao, X.},
     TITLE = {Discorrelation between primes in short intervals and
              polynomial phases},
   JOURNAL = {Int. Math. Res. Not. IMRN},
  FJOURNAL = {International Mathematics Research Notices. IMRN},
      YEAR = {2021},
    NUMBER = {16},
     PAGES = {12330--12355},
      ISSN = {1073-7928,1687-0247},
   MRCLASS = {11L20 (11N05)},
  MRNUMBER = {4300228},
MRREVIEWER = {Bingrong\ Huang},
       DOI = {10.1093/imrn/rnz188},
       URL = {https://doi-org.ezproxy.uky.edu/10.1093/imrn/rnz188},
}

@article {GT12,
    AUTHOR = {Green, B. and Tao, T.},
     TITLE = {The quantitative behaviour of polynomial orbits on
              nilmanifolds},
   JOURNAL = {Ann. of Math. (2)},
  FJOURNAL = {Annals of Mathematics. Second Series},
    VOLUME = {175},
      YEAR = {2012},
    NUMBER = {2},
     PAGES = {465--540},
      ISSN = {0003-486X,1939-8980},
   MRCLASS = {37A15},
  MRNUMBER = {2877065},
MRREVIEWER = {Tamar\ Ziegler},
       DOI = {10.4007/annals.2012.175.2.2},
       URL = {https://doi-org.ezproxy.uky.edu/10.4007/annals.2012.175.2.2},
}

@article {GT-linear,
    AUTHOR = {Green, B. and Tao, T.},
     TITLE = {Linear equations in primes},
   JOURNAL = {Ann. of Math. (2)},
  FJOURNAL = {Annals of Mathematics. Second Series},
    VOLUME = {171},
      YEAR = {2010},
    NUMBER = {3},
     PAGES = {1753--1850},
      ISSN = {0003-486X,1939-8980},
   MRCLASS = {11N13 (11B30 11P32)},
  MRNUMBER = {2680398},
MRREVIEWER = {Tamar\ Ziegler},
       DOI = {10.4007/annals.2010.171.1753},
       URL = {https://doi-org.ezproxy.uky.edu/10.4007/annals.2010.171.1753},
}

@article {GT-nil,
    AUTHOR = {Green, B. and Tao, T.},
     TITLE = {The {M}\"obius function is strongly orthogonal to
              nilsequences},
   JOURNAL = {Ann. of Math. (2)},
  FJOURNAL = {Annals of Mathematics. Second Series},
    VOLUME = {175},
      YEAR = {2012},
    NUMBER = {2},
     PAGES = {541--566},
      ISSN = {0003-486X,1939-8980},
   MRCLASS = {37A45 (11A25)},
  MRNUMBER = {2877066},
MRREVIEWER = {Tamar\ Ziegler},
       DOI = {10.4007/annals.2012.175.2.3},
       URL = {https://doi-org.ezproxy.uky.edu/10.4007/annals.2012.175.2.3},
}

@article {GT-kAP,
    AUTHOR = {Green, B. and Tao, T.},
     TITLE = {The primes contain arbitrarily long arithmetic progressions},
   JOURNAL = {Ann. of Math. (2)},
  FJOURNAL = {Annals of Mathematics. Second Series},
    VOLUME = {167},
      YEAR = {2008},
    NUMBER = {2},
     PAGES = {481--547},
      ISSN = {0003-486X,1939-8980},
   MRCLASS = {11N13 (11A41 11B25 37A45)},
  MRNUMBER = {2415379},
MRREVIEWER = {Tamar\ Ziegler},
       DOI = {10.4007/annals.2008.167.481},
       URL = {https://doi-org.ezproxy.uky.edu/10.4007/annals.2008.167.481},
}

@article {GTZ,
    AUTHOR = {Green, B. and Tao, T. and Ziegler, T.},
     TITLE = {An inverse theorem for the {G}owers {$U^{s+1}[N]$}-norm},
   JOURNAL = {Ann. of Math. (2)},
  FJOURNAL = {Annals of Mathematics. Second Series},
    VOLUME = {176},
      YEAR = {2012},
    NUMBER = {2},
     PAGES = {1231--1372},
      ISSN = {0003-486X,1939-8980},
   MRCLASS = {11B30},
  MRNUMBER = {2950773},
MRREVIEWER = {Julia\ Wolf},
       DOI = {10.4007/annals.2012.176.2.11},
       URL = {https://doi-org.ezproxy.uky.edu/10.4007/annals.2012.176.2.11},
}

@article {HeWang,
    AUTHOR = {He, X. and Wang, Z.},
     TITLE = {M\"obius disjointness for nilsequences along short intervals},
   JOURNAL = {Trans. Amer. Math. Soc.},
  FJOURNAL = {Transactions of the American Mathematical Society},
    VOLUME = {374},
      YEAR = {2021},
    NUMBER = {6},
     PAGES = {3881--3917},
      ISSN = {0002-9947,1088-6850},
   MRCLASS = {37A44 (11A25)},
  MRNUMBER = {4251216},
MRREVIEWER = {Jonas\ Der\'e},
       DOI = {10.1090/tran/8176},
       URL = {https://doi-org.ezproxy.uky.edu/10.1090/tran/8176},
}

@article {PS,
    AUTHOR = {Piatetski-Shapiro, I. I.},
     TITLE = {On the distribution of prime numbers in sequences of the form
              {$[f(n)]$}},
   JOURNAL = {Mat. Sbornik N.S.},
  FJOURNAL = {Mat. Sbornik N.S.},
    VOLUME = {33/75},
      YEAR = {1953},
     PAGES = {559--566},
   MRCLASS = {10.0X},
  MRNUMBER = {59302},
MRREVIEWER = {R.\ Bellman},
}

@article {Rivat-Wu,
    AUTHOR = {Rivat, J. and Wu, J.},
     TITLE = {Prime numbers of the form {$[n^c]$}},
   JOURNAL = {Glasg. Math. J.},
  FJOURNAL = {Glasgow Mathematical Journal},
    VOLUME = {43},
      YEAR = {2001},
    NUMBER = {2},
     PAGES = {237--254},
      ISSN = {0017-0895,1469-509X},
   MRCLASS = {11N05 (11N36)},
  MRNUMBER = {1838628},
MRREVIEWER = {Grigori\ Kolesnik},
       DOI = {10.1017/S0017089501020080},
       URL = {https://doi-org.ezproxy.uky.edu/10.1017/S0017089501020080},
}

@article {Rivat-Sargos,
    AUTHOR = {Rivat, J. and Sargos, P.},
     TITLE = {Nombres premiers de la forme {$\lfloor n^c\rfloor$}},
   JOURNAL = {Canad. J. Math.},
  FJOURNAL = {Canadian Journal of Mathematics. Journal Canadien de
              Math\'ematiques},
    VOLUME = {53},
      YEAR = {2001},
    NUMBER = {2},
     PAGES = {414--433},
      ISSN = {0008-414X,1496-4279},
   MRCLASS = {11N05 (11L07)},
  MRNUMBER = {1820915},
MRREVIEWER = {G.\ Greaves},
       DOI = {10.4153/CJM-2001-017-0},
       URL = {https://doi-org.ezproxy.uky.edu/10.4153/CJM-2001-017-0},
}

@article {Liu-Rivat,
    AUTHOR = {Liu, H. Q. and Rivat, J.},
     TITLE = {On the {P}iatetski-{S}hapiro prime number theorem},
   JOURNAL = {Bull. London Math. Soc.},
  FJOURNAL = {The Bulletin of the London Mathematical Society},
    VOLUME = {24},
      YEAR = {1992},
    NUMBER = {2},
     PAGES = {143--147},
      ISSN = {0024-6093,1469-2120},
   MRCLASS = {11N05 (11L07 11N36)},
  MRNUMBER = {1148674},
MRREVIEWER = {Grigori\ Kolesnik},
       DOI = {10.1112/blms/24.2.143},
       URL = {https://doi-org.ezproxy.uky.edu/10.1112/blms/24.2.143},
}

@article {Heath-Brown,
    AUTHOR = {Heath-Brown, D. R.},
     TITLE = {The {P}iatetski-{S}hapiro prime number theorem},
   JOURNAL = {J. Number Theory},
  FJOURNAL = {Journal of Number Theory},
    VOLUME = {16},
      YEAR = {1983},
    NUMBER = {2},
     PAGES = {242--266},
      ISSN = {0022-314X,1096-1658},
   MRCLASS = {10H20},
  MRNUMBER = {698168},
MRREVIEWER = {B.\ Garrison},
       DOI = {10.1016/0022-314X(83)90044-6},
       URL = {https://doi-org.ezproxy.uky.edu/10.1016/0022-314X(83)90044-6},
}

@article {Jia,
    AUTHOR = {Jia, C. H.},
     TITLE = {On {P}iatetski-{S}hapiro prime number theorem. {II}},
   JOURNAL = {Sci. China Ser. A},
  FJOURNAL = {Science in China (Scientia Sinica). Series A. Mathematics,
              Physics, Astronomy},
    VOLUME = {36},
      YEAR = {1993},
    NUMBER = {8},
     PAGES = {913--926},
      ISSN = {1001-6511},
   MRCLASS = {11N05 (11N36)},
  MRNUMBER = {1248537},
MRREVIEWER = {Grigori\ Kolesnik},
}

@article {Baker-Harman-Rivat,
    AUTHOR = {Baker, R. C. and Harman, G. and Rivat, J.},
     TITLE = {Primes of the form {$[n^c]$}},
   JOURNAL = {J. Number Theory},
  FJOURNAL = {Journal of Number Theory},
    VOLUME = {50},
      YEAR = {1995},
    NUMBER = {2},
     PAGES = {261--277},
      ISSN = {0022-314X,1096-1658},
   MRCLASS = {11N36 (11N05)},
  MRNUMBER = {1316821},
MRREVIEWER = {G.\ Greaves},
       DOI = {10.1006/jnth.1995.1020},
       URL = {https://doi-org.ezproxy.uky.edu/10.1006/jnth.1995.1020},
}

@article {Kumchev,
    AUTHOR = {Kumchev, A.},
     TITLE = {On the distribution of prime numbers of the form {$[n^c]$}},
   JOURNAL = {Glasg. Math. J.},
  FJOURNAL = {Glasgow Mathematical Journal},
    VOLUME = {41},
      YEAR = {1999},
    NUMBER = {1},
     PAGES = {85--102},
      ISSN = {0017-0895,1469-509X},
   MRCLASS = {11N36 (11N05)},
  MRNUMBER = {1689675},
MRREVIEWER = {R.\ C.\ Baker},
       DOI = {10.1017/S0017089599970477},
       URL = {https://doi-org.ezproxy.uky.edu/10.1017/S0017089599970477},
}

@article{SDP,
 author = {Sun, Y.-C. and Du, S.-S. and Pan, H.},
     TITLE = {Vinogradov's theorem with {P}iatetski-{S}hapiro primes},
   JOURNAL = {Int. Math. Res. Not. IMRN},
  FJOURNAL = {International Mathematics Research Notices. IMRN},
      YEAR = {2025},
    NUMBER = {15},
     PAGES = {Paper No. rnaf125, 30},
      ISSN = {1073-7928,1687-0247},
   MRCLASS = {11P32 (11B30)},
  MRNUMBER = {4939270},
MRREVIEWER = {Xianmeng\ Meng},
       DOI = {10.1093/imrn/rnaf125},
       URL = {https://doi.org/10.1093/imrn/rnaf125},
}

@article {LiPan,
    AUTHOR = {Li, H. and Pan, H.},
     TITLE = {The {G}reen-{T}ao theorem for {P}iatetski-{S}hapiro primes},
   JOURNAL = {J. Funct. Anal.},
  FJOURNAL = {Journal of Functional Analysis},
    VOLUME = {291},
      YEAR = {2026},
    NUMBER = {1},
     PAGES = {Paper No. 111440, 25},
      ISSN = {0022-1236,1096-0783},
   MRCLASS = {11P32 (05D10 11B25 11L07 11N36)},
  MRNUMBER = {5050053},
       DOI = {10.1016/j.jfa.2026.111440},
       URL = {https://doi-org.ezproxy.uky.edu/10.1016/j.jfa.2026.111440},
}

@article {BST,
    AUTHOR = {Bienvenu, P.-Y. and Shao, X. and Ter\"av\"ainen,
              J.},
     TITLE = {A transference principle for systems of linear equations, and
              applications to almost twin primes},
   JOURNAL = {Algebra Number Theory},
  FJOURNAL = {Algebra \& Number Theory},
    VOLUME = {17},
      YEAR = {2023},
    NUMBER = {2},
     PAGES = {497--539},
      ISSN = {1937-0652,1944-7833},
   MRCLASS = {11B30 (11P32)},
  MRNUMBER = {4564765},
MRREVIEWER = {Ben\ Joseph\ Green},
       DOI = {10.2140/ant.2023.17.497},
       URL = {https://doi-org.ezproxy.uky.edu/10.2140/ant.2023.17.497},
}

@book{GKbook,
 author = {Graham, S. W. and Kolesnik, G.},
 title = {Van der {Corput}'s method for exponential sums},
 fseries = {London Mathematical Society Lecture Note Series},
 series = {Lond. Math. Soc. Lect. Note Ser.},
 issn = {0076-0552},
 volume = {126},
 isbn = {0-521-33927-8},
 year = {1991},
 publisher = {Cambridge etc.: Cambridge University Press},
 language = {English},
 zbMATH = {194767},
 Zbl = {0713.11001}
}

@book {Montgomery,
    AUTHOR = {Montgomery, H. L.},
     TITLE = {Ten lectures on the interface between analytic number theory
              and harmonic analysis},
    SERIES = {CBMS Regional Conference Series in Mathematics},
    VOLUME = {84},
 PUBLISHER = {Conference Board of the Mathematical Sciences, Washington, DC;
              by the American Mathematical Society, Providence, RI},
      YEAR = {1994},
     PAGES = {xiv+220},
      ISBN = {0-8218-0737-4},
   MRCLASS = {11-02 (11Kxx 11L07 11Mxx 11Nxx)},
  MRNUMBER = {1297543},
MRREVIEWER = {John\ B.\ Friedlander},
       DOI = {10.1090/cbms/084},
       URL = {https://doi-org.ezproxy.uky.edu/10.1090/cbms/084},
}

@article{CFZ,
 author = {Conlon, D. and Fox, J. and Zhao, Y.},
 title = {A relative {Szemer{\'e}di} theorem},
 fjournal = {Geometric and Functional Analysis. GAFA},
 journal = {Geom. Funct. Anal.},
 issn = {1016-443X},
 volume = {25},
 number = {3},
 pages = {733--762},
 year = {2015},
 language = {English},
 doi = {10.1007/s00039-015-0324-9},
 zbMATH = {6466325},
 Zbl = {1345.11008}
}

\end{document}